\newcommand{\N}{{\mathbb N}}
\newcommand{\R}{{\mathbb R}}
\def \LL {\mathcal{L}}
\def \de {\partial}
\theoremstyle{definition}
\newtheorem{defn}{Definition}[section]
\newtheorem{rem}[defn]{Remark}
\theoremstyle{plain}
\newtheorem{thm}[defn]{Theorem}
\newtheorem{prop}[defn]{Proposition}
\newtheorem{lem}[defn]{Lemma}
\numberwithin{equation}{section}
\date{\today}
\begin{document}
\title[On mixed local-nonlocal problems with Hardy potential]{On mixed local-nonlocal problems with Hardy potential}

\author[S.\,Biagi]{Stefano Biagi}
\author[F.\,Esposito]{Francesco Esposito}
\author[L.\,Montoro]{Luigi Montoro}
\author[E.\,Vecchi]{Eugenio Vecchi}

\address[S.\,Biagi]{Politecnico di Milano - Dipartimento di Matematica
\newline\indent
Via Bonardi 9, 20133 Milano, Italy}
\email{stefano.biagi@polimi.it}

\address[F.\,Esposito]{Dipartimento di Matematica e Informatica, Universit\`a della Calabria
\newline\indent
Ponte Pietro Bucci 31B, 87036 Arcavacata di Rende, Cosenza, Italy}
\email{francesco.esposito@unical.it}

\address[L.\,Montoro]{Dipartimento di Matematica e Informatica, Universit\`a della Calabria
\newline\indent
Ponte Pietro Bucci 31B, 87036 Arcavacata di Rende, Cosenza, Italy}
\email{montoro@mat.unical.it}

\address[E.\,Vecchi]{Dipartimento di Matematica, Universit\`a di Bologna
\newline\indent
Piazza di Porta San Donato 5, 40126 Bologna, Italy}
\email{eugenio.vecchi2@unibo.it}

\thanks{The authors are members of INdAM. 
S.Biagi and E.Vecchi are partially supported by 
the PRIN 2022 project 2022R537CS \emph{$NO^3$ - Nodal Optimization, Nonlinear elliptic equations, Nonlocal geometric problems, with a focus on regularity}, founded by the European Union - Next Generation EU. 
F. Esposito 
and L. Montoro are partially supported by the PRIN 2022 project P2022YFAJH 
{\em Linear and Nonlinear PDE's: New directions and Applications}.
  F. Esposito and E. Vecchi are partially supported by the INdAM-GNAMPA Project 
  \emph{Esistenza, unicità e regolarità di soluzioni per problemi singolari}.}

\subjclass[2020]{Primary 35J75, 35A01, 35B65;  Secondary 35M10, 35J15}


\keywords{Mixed local-nonlocal PDEs, Hardy potential, existence and regularity of solutions}

\begin{abstract}
In this paper we study the effect of the Hardy potential on existence, uniqueness and optimal summability of solutions of the mixed local-nonlocal elliptic problem
$$-\Delta u + (-\Delta)^s u - \gamma \frac{u}{|x|^2}=f \text{ in } \Omega, \ u=0 \text{ in } \R^n \setminus \Omega,$$
where $\Omega$ is a bounded domain in $\R^n$ containing the origin and $\gamma> 0$. In particular, we will discuss the existence, non-existence and uniqueness of solutions in terms of the summability of $f$ and  of the value of the parameter $\gamma$.

\end{abstract}

\maketitle

\medskip

\section{Introduction}\label{intro}
Elliptic and parabolic PDEs with the Hardy potential $|x|^{-2}$, where $x\in \mathbb{R}^n$ with $n\geq 3$, are nowadays a classical topic of investigation. We refer to the monograph \cite{PeralBook} for a detailed description of the mathematical and physical reasons that led to the study of such PDEs as well as most of the known result both in the model local and nonlocal case, namely when the leading operator is either $-\Delta$ or $(-\Delta)^s$ with $s\in (0,1)$. In the latter case, the interesting Hardy potential is given by $|x|^{-2s}$. 
Our aim here is to start the study of problems of the form 
\begin{equation*}
\left\{\begin{array}{rl}
\mathcal{L}u -\gamma\, \dfrac{u}{|x|^{2}} = f(x) & \textrm{in } \Omega,\\
u=0 & \textrm{in } \mathbb{R}^{n}\setminus \Omega,
\end{array}\right.
\end{equation*}
\noindent where $\Omega \subset \mathbb{R}^{n}$ is an open and bounded set with smooth enough boundary $\partial \Omega$, $0\in \Omega$, $f$ belongs to some suitable $L^{m}(\Omega)$ and $\gamma>0$. It will be soon clarified that the choice of the {\it local} Hardy potential is the most proper in this case, and this will immediately provide a natural upper bound for $\gamma$.
Finally, we introduce the operator
$$\mathcal{L} := -\Delta + (-\Delta)^{s},$$
where $(-\Delta)^s$ denotes the fractional Laplacian of order $s\in (0,1)$, i.e.,
$$(-\Delta)^s u = C_{n,s}\,\lim_{\varepsilon\to 0}\int_{\{|x-y|\geq\varepsilon\}}
\frac{u(x)-u(y)}{|x-y|^{n+2s}}\,dy,$$
and $C_{n,s} > 0$ is a suitable normalization constant (see, precisely, Remark \ref{rem:X12prop}). 

The above mixed local-nonlocal o\-pe\-ra\-tor is a particular instance of a general class of 
o\-pe\-ra\-tors firstly studied in the 60's \cite{BCP, Cancelier} in connection with the validity of maximum principle and 
more recently from a probabilistic point of view, see, e.g., \cite{CKSV} and the references therein.
Recently there has been a renewed interest in problems presenting both a local and nonlocal nature, mainly due to 
their interest in the applications see, e.g., \cite{DPLV2}. The most common thread of the more recent contributions 
concerns the development of a regularity theory making mainly use of purely analytical techniques, 
see, e.g., \cite{BDVV, DeFMin, GarainKinnunen, GarainLindgren, SVWZ, SVWZ2} and the references therein, so to be 
able to treat PDEs where the leading operator is $\mathcal{L}$.\\
As already announced, our interest in this paper is to start the study of mixed local-nonlocal PDEs with a singular potential. We just mention that other singular PDEs driven by $\mathcal{L}$ have been considered so far, see e.g. \cite{ArRa, Garain, BV2}. Coming back to our issues and in order to clarify the choice of the purely local Hardy potential, the starting point is the Hardy inequality, originally proved in \cite{Hardy_Littlewood} in the one 
dimensional case and subsequently extended in various directions:
\begin{equation}\label{eq:NonOptimalHardy}
C \, \int_{\mathbb{R}^{n}}\dfrac{u^2}{|x|^2}\, dx \leq  \int_{\mathbb{R}^{n}}|\nabla u|^2\, dx, \quad u\in 
C^{\infty}_{0}(\mathbb{R}^{n}),
\end{equation}
\noindent where $C>0$. As it is customary when dealing with functional inequalities, it is interesting to detect whether there is an optimal constant for which \eqref{eq:NonOptimalHardy} holds and whether this is ever achieved. These questions are nowadays classical for the inequality \eqref{eq:NonOptimalHardy}: the best constant is purely dimensional, it is given by 
$$\Lambda_{n}:= \dfrac{(n-2)^2}{4},$$
\noindent and it is never achieved. Similarly, there exists an optimal positive constant $\overline{\Lambda}_{n,s}$, depending only on $n$ and $s$ and never achieved, such that
\begin{equation}\label{eq:NonLocalOptimalHardy}
\overline{\Lambda}_{n,s} \, \int_{\mathbb{R}^{n}}\dfrac{u^2}{|x|^{2s}}\, dx \leq  
\frac{C_{n,s}}{2}[u]^2_s, \quad u\in C^{\infty}_{0}(\mathbb{R}^{n}),
\end{equation}
\noindent where 
$$[u]^2_{s}:= \iint_{\mathbb{R}^{2n}}\dfrac{|u(x)-u(y)|^2}{|x-y|^{n+2s}}\, dx\, dy,$$
\noindent denotes the Gagliardo seminorm of $u$ (of order $s \in (0,1)$).\\
The inequalities \eqref{eq:NonOptimalHardy} with $C=\Lambda_n$ and \eqref{eq:NonLocalOptimalHardy} are the starting point of our mixed local-nonlocal analysis. Clearly, the following trivially hold
\begin{equation*}
\Lambda_{n} \, \int_{\mathbb{R}^{n}}\dfrac{u^2}{|x|^2}\, dx \leq  \int_{\mathbb{R}^{n}}|\nabla u|^2\, dx + \frac{C_{n,s}}{2}[u]^2_s, \quad u\in C^{\infty}_{0}(\mathbb{R}^{n}),
\end{equation*}
\noindent and
\begin{equation*}
\overline{\Lambda}_{n,s} \, \int_{\mathbb{R}^{n}}\dfrac{u^2}{|x|^{2s}}\, dx \leq  \int_{\mathbb{R}^{n}}|\nabla u|^2\, dx + \frac{C_{n,s}}{2}[u]^2_s, \quad u\in C^{\infty}_{0}(\mathbb{R}^{n}).
\end{equation*}
As probably expected, the {\it best} Hardy potential (in a sense to be specified later on, see after Proposition \ref{prop:bestHardy}) to be considered is the one coming from the local Hardy inequality. This leads to the following natural question: does the minimization problem
\begin{equation}\label{eq:MinimizationProblem}
\Lambda_{s,n}:=\inf_{u \in C^{\infty}_{0}(\mathbb{R}^{n})\setminus \{0\}} \dfrac{\rho(u)^2}{\mathcal{H}(u)}
\end{equation}
\noindent admit a minimizer? Here, we have introduced the shorthand notation
\begin{equation*}
\rho(u)^2:=\|\nabla u\|^2_{L^{2}(\mathbb{R}^{n})}+\frac{C_{n,s}}{2}[u]^2_s \quad \mathrm{and} \quad \mathcal{H}(u):=\int_{\mathbb{R}^{n}}\dfrac{u^2}{|x|^2}\, dx.
\end{equation*}
Our first result answers to the above question.
\begin{thm}\label{thm:BestConstant}
The number defined by the minimization problem \eqref{eq:MinimizationProblem} coincides with the optimal local Hardy constant $\Lambda_n$ and it is never achieved.
\end{thm}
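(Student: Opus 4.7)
The strategy is to establish the two-sided bound $\Lambda_{s,n}=\Lambda_n$ by combining the classical local Hardy inequality with a scaling argument that kills the fractional contribution, and then to deduce non-attainment from the non-attainment of the sharp local Hardy constant.

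The lower bound $\Lambda_{s,n}\ge\Lambda_n$ is essentially free: for every $u\in C^{\infty}_0(\mathbb{R}^n)\setminus\{0\}$ one has $\rho(u)^2\ge\|\nabla u\|_{L^2(\mathbb{R}^n)}^2$ because $[u]_s^2\ge0$, and then the local Hardy inequality with optimal constant $\Lambda_n$ gives $\|\nabla u\|_{L^2(\mathbb{R}^n)}^2\ge\Lambda_n\,\mathcal{H}(u)$. Dividing by $\mathcal{H}(u)>0$ (finite because $n\ge3$ and $u\in C^{\infty}_0$) and taking the infimum yields $\Lambda_{s,n}\ge\Lambda_n$.

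For the upper bound $\Lambda_{s,n}\le\Lambda_n$, the key idea is to exploit the mismatch of homogeneities between the local and the fractional parts. Fix $u\in C^{\infty}_0(\mathbb{R}^n)\setminus\{0\}$ and, for $\lambda>0$, set $u_\lambda(x):=\lambda^{(n-2)/2}u(\lambda x)$. A direct change of variables gives
$$\|\nabla u_\lambda\|_{L^2(\mathbb{R}^n)}^2=\|\nabla u\|_{L^2(\mathbb{R}^n)}^2,\qquad \mathcal{H}(u_\lambda)=\mathcal{H}(u),\qquad [u_\lambda]_s^2=\lambda^{2s-2}[u]_s^2.$$
Since $2s-2<0$, the fractional seminorm vanishes as $\lambda\to+\infty$, hence
$$\Lambda_{s,n}\le\lim_{\lambda\to+\infty}\frac{\rho(u_\lambda)^2}{\mathcal{H}(u_\lambda)}=\frac{\|\nabla u\|_{L^2(\mathbb{R}^n)}^2}{\mathcal{H}(u)}.$$
Taking the infimum over $u\in C^{\infty}_0(\mathbb{R}^n)\setminus\{0\}$ and using the sharpness of $\Lambda_n$ in the local Hardy inequality, we conclude $\Lambda_{s,n}\le\Lambda_n$.

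Finally, to rule out attainment, suppose by contradiction that some $u\in C^{\infty}_0(\mathbb{R}^n)\setminus\{0\}$ realizes the infimum $\Lambda_{s,n}=\Lambda_n$. Since $u\not\equiv 0$ is smooth and compactly supported, $[u]_s^2>0$, and thus
$$\rho(u)^2=\|\nabla u\|_{L^2(\mathbb{R}^n)}^2+\tfrac{C_{n,s}}{2}[u]_s^2>\|\nabla u\|_{L^2(\mathbb{R}^n)}^2\ge \Lambda_n\,\mathcal{H}(u),$$
contradicting $\rho(u)^2=\Lambda_n\,\mathcal{H}(u)$. The main (mild) obstacle is the verification of the scaling identities for $[\,\cdot\,]_s$ and the handling of the singular integral $\mathcal{H}(u)$; both are routine provided $n\ge3$, which is the standing assumption for the Hardy setting.
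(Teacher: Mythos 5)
Your computation of the value $\Lambda_{s,n}=\Lambda_n$ is exactly the paper's argument: the trivial lower bound from $\rho(u)^2\ge\|\nabla u\|^2_{L^2}$ plus the local Hardy inequality, and the upper bound via the dilation $u_\lambda(x)=\lambda^{(n-2)/2}u(\lambda x)$, which leaves the gradient term and $\mathcal{H}$ invariant while scaling $[\cdot]_s^2$ by $\lambda^{2s-2}\to 0$ as $\lambda\to+\infty$.

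On non-attainment there is a scope issue. You only rule out minimizers $u\in C^\infty_0(\mathbb{R}^n)\setminus\{0\}$ by observing that such a $u$ has $[u]_s^2>0$, hence $\rho(u)^2>\|\nabla u\|^2_{L^2}\ge\Lambda_n\mathcal{H}(u)$. That is correct, but it is a rather weak statement: the meaningful question for an infimum taken over a dense class is whether it is attained in the natural completion $\mathcal{X}^{1,2}(\mathbb{R}^n)$. This is precisely what the paper proves in Proposition~\ref{thm:LambdaneverAch}: assuming a minimizer $u_0\in\mathcal{X}^{1,2}(\Omega)$ with $\mathcal{H}(u_0)=1$, the chain $\Lambda_n\le\|\nabla u_0\|^2_{L^2}\le\rho(u_0)^2=\Lambda_n$ forces $[u_0]_s=0$, so $u_0$ is a.e.\ constant; but a constant function in $\mathcal{X}^{1,2}$ must vanish (e.g.\ by the Sobolev embedding into $L^{2^*}$), contradicting $\mathcal{H}(u_0)=1$. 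Your argument uses the same underlying observation (positivity of the Gagliardo seminorm for nonzero functions), but as written it invokes the smoothness and compact support of $u$; to cover the completion you would need to replace that justification with the observation that a function with vanishing Gagliardo seminorm is a.e.\ constant and that the only constant in $\mathcal{X}^{1,2}(\mathbb{R}^n)$ is $0$, and to note that the local Hardy inequality extends to $\mathcal{X}^{1,2}$ by density. Without this extension, the non-attainment claim as you have proved it is strictly weaker than what the theorem is meant to assert.
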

\medskip
Having in mind applications to mixed local-nonlocal PDEs, we are also interested in studying Hardy-type inequalities in open and bounded sets $\Omega \subset \mathbb{R}^n$. 
The second main result of the present paper is the following
\begin{thm}\label{thm:Main2}
Let $\Omega \subset \mathbb{R}^{n}$ be an open and bounded set such that $0\in \Omega$. Then
\begin{equation*}
\Lambda_{s,n}(\Omega):=\inf \left\{\rho(u)^2: u \in C^{\infty}_{0}(\Omega)\, \textrm{ s.t. } \,\mathcal{H}(u)=1\right\},
\end{equation*}
\noindent is independent of $\Omega$, coincides with $\Lambda_n$ and it is never achieved.
\end{thm}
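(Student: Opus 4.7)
The plan is to establish $\Lambda_{s,n}(\Omega) = \Lambda_n$ by two inequalities and then deduce non-attainment from Theorem \ref{thm:BestConstant}. The lower bound $\Lambda_{s,n}(\Omega)\ge \Lambda_n$ is immediate: every $u\in C^\infty_0(\Omega)$ extended by zero gives an element of $C^\infty_0(\mathbb{R}^n)$ without changing $\rho(u)^2$ or $\mathcal{H}(u)$, so the infimum over the smaller class can only be larger, and Theorem \ref{thm:BestConstant} gives exactly $\Lambda_n$ for the infimum over $\mathbb{R}^n$.

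For the upper bound, the key point is that the two ingredients of $\rho^2$ scale differently. Fix $v\in C^\infty_0(\mathbb{R}^n)$ with $\mathrm{supp}(v)\subset B_R$, and for $\lambda>0$ set
\[
v_\lambda(x):=\lambda^{(n-2)/2}\,v(\lambda x).
\]
A direct change of variables gives the scaling identities
\[
\|\nabla v_\lambda\|_{L^2(\mathbb{R}^n)}^2=\|\nabla v\|_{L^2(\mathbb{R}^n)}^2,\qquad \mathcal{H}(v_\lambda)=\mathcal{H}(v),\qquad [v_\lambda]_s^{\,2}=\lambda^{2s-2}[v]_s^{\,2}.
\]
Since $0\in\Omega$, choose $r>0$ with $\overline{B_r}\subset\Omega$; then for every $\lambda\ge R/r$ the function $v_\lambda$ lies in $C^\infty_0(\Omega)$ and is therefore admissible in the definition of $\Lambda_{s,n}(\Omega)$. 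Consequently
\[
\Lambda_{s,n}(\Omega)\le \frac{\rho(v_\lambda)^2}{\mathcal{H}(v_\lambda)}=\frac{\|\nabla v\|_{L^2}^2}{\mathcal{H}(v)}+\frac{C_{n,s}}{2}\,\lambda^{2s-2}\,\frac{[v]_s^{\,2}}{\mathcal{H}(v)}.
\]
Because $2s-2<0$, letting $\lambda\to +\infty$ makes the nonlocal contribution vanish, whence $\Lambda_{s,n}(\Omega)\le \|\nabla v\|_{L^2}^2/\mathcal{H}(v)$. Taking the infimum over $v\in C^\infty_0(\mathbb{R}^n)\setminus\{0\}$ and invoking the classical sharp Hardy inequality yields $\Lambda_{s,n}(\Omega)\le \Lambda_n$, closing the argument for the equality (and incidentally for the independence from $\Omega$, since neither bound used any property of $\Omega$ beyond boundedness and $0\in\Omega$).

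For the non-attainment, suppose by contradiction that some $u$ in the natural completion of $C^\infty_0(\Omega)$ realizes the infimum with $\mathcal{H}(u)=1$ and $\rho(u)^2=\Lambda_n$. Since such functions are supported in $\overline{\Omega}$, their trivial extension $\tilde u$ to $\mathbb{R}^n$ preserves both $\rho(\tilde u)^2=\rho(u)^2$ and $\mathcal{H}(\tilde u)=\mathcal{H}(u)$, so $\tilde u$ would attain $\Lambda_{s,n}=\Lambda_n$ on $\mathbb{R}^n$, contradicting Theorem \ref{thm:BestConstant}.

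The only delicate point is the upper bound: it relies on the sub-criticality $2s<2$ of the nonlocal term under the local Hardy scaling, which is precisely the mechanism that forces the mixed problem's sharp constant to coincide with the purely local one. All remaining steps are formal consequences of the scaling identities and of the zero-extension property.
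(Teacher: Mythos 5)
Your proof is correct and essentially mirrors the paper's: the equality $\Lambda_{s,n}(\Omega)=\Lambda_n$ is obtained from the identical scaling argument $v_\lambda(x)=\lambda^{(n-2)/2}v(\lambda x)$, which pushes the support into a small ball inside $\Omega$ and suppresses the Gagliardo seminorm (order $\lambda^{2s-2}$) while fixing the gradient and Hardy terms. The one packaging difference is that you derive both the lower bound and the non-attainment by reduction to the $\mathbb{R}^n$ case, via $C_0^\infty(\Omega)\subseteq C_0^\infty(\mathbb{R}^n)$ and $\mathcal{X}^{1,2}(\Omega)\subseteq\mathcal{X}^{1,2}(\mathbb{R}^n)$ together with Theorem \ref{thm:BestConstant}, whereas the paper runs the arguments directly (lower bound from $\rho(u)^2\ge\|\nabla u\|_{L^2}^2\ge\Lambda_n\,\mathcal{H}(u)$ in Proposition \ref{thm:lambdanslambdan}, and non-attainment from the squeeze $\Lambda_n\le\|\nabla u_0\|_{L^2}^2\le\rho(u_0)^2=\Lambda_n$ forcing $[u_0]_s=0$ in Proposition \ref{thm:LambdaneverAch}); since Proposition \ref{thm:LambdaneverAch} is itself stated for arbitrary open $\Omega$ including $\mathbb{R}^n$, there is no circularity and your factorization through Theorem \ref{thm:BestConstant} is logically equivalent.
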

In light of similar results proved for mixed local-nonlocal Sobolev inequalities (see \cite{BDVV5}), the above fact is not so surprising and it is motivated by the lack of scaling invariance of the functional $\rho(u)$.

\medskip

The aforementioned results open the way to the study of mixed local-nonlocal PDEs in presence of the classical Hardy potential, namely
\begin{equation}\label{eq:DirichletProblem}{\tag {$\mathrm{D}_{\gamma,f}$}}
\left\{ \begin{array}{rl}
-\Delta u + (-\Delta)^s u - \gamma \dfrac{u}{|x|^2} = f & \mathrm{in } \,\Omega,\\
u=0 & \mathrm{in } \, \mathbb{R}^{n}\setminus \Omega,
\end{array}\right.
\end{equation}
\noindent where $n \geq 3$, $\Omega \subset \mathbb{R}^{n}$ is an open and bounded set with smooth enough boundary and such that $0\in \Omega$, $f\in L^{m}(\Omega)$ for some $m \geq 1$ and $\gamma \in (0,\Lambda_n)$.\\  
The {\it boundary condition} is the purely nonlocal one and it is quite natural if one is interested in Dirichlet-type boundary conditions, see, e.g., \cite{BDVV} for examples in the case $\gamma =0$.\\
Our aim is to study the regularity of the solutions of \eqref{eq:DirichletProblem} in terms of the summability of the source term $f$: this forces to consider different notions of solutions and it is nowadays quite classical. This line of research is quite old in the case of variational local elliptic operators as well as purely nonlocal ones. We refer to Subsection \ref{subsec:Dirichlet} for a detailed review of the known results in the case of $-\Delta$, of $(-\Delta)^s$ and $\mathcal{L}$. \\
Having this background in mind, we state our first result which provides existence, optimal solvability and gain of integrability of solutions which can be considered of {\it energy-type} since $f \in L^{m}(\Omega)$ with $m\geq \tfrac{2n}{n+2}$.

\begin{rem}
Here, and in all the paper, when we write $f \geq 0$, we mean that $f\gneq 0$ in $\Omega$, i.e.~$f$ is not identically zero. Moreover, we point out that the positivity assumption on $f$ is not a restriction, since our problem is linear. 
\end{rem}

\begin{thm}\label{thm:EnergySolutions}
Let $f\in L^m(\Omega)$ with $m\geq (2^*)'=\tfrac{2n}{n+2}$. Then, 
problem \eqref{eq:DirichletProblem} admits a unique weak solution $u_f\in\mathcal{X}^{1,2}(\Omega)$ (see Section \ref{sec.Prel} for the relevant definition), 
in the following sense: 
\begin{equation}  \label{eq:uniquesol}
	\begin{split}
	\int_{\R^n}\nabla u_f\cdot\nabla v\,dx\, &
	+ \frac{C_{n,s}}{2} \iint_{\R^{2n}}\frac{(u_f(x)-u_f(y))(v(x)-v(y))}{|x-y|^{n+2s}}\,dx\,dy - \gamma \int_{\mathbb{R}^n}\dfrac{u_f v}{|x|^2} \, dx \\
	&= \int_{\Omega}fv \, dx, \quad \textrm{for every } v \in \mathcal{X}^{1,2}(\Omega).
	\end{split}
\end{equation}
Moreover, if $f \geq 0$ then $u>0$ in $\Omega$. In addition, if  $f$ also belongs to $L^{m}(\Omega)$  with
$\tfrac{2n}{n+2}\leq m < \tfrac{n}{2}$ and
$$0< \gamma < \gamma(m):= \dfrac{n(m-1)(n-2m)}{m^2},$$
\noindent then the unique weak solution $u_f\in\mathcal{X}^{1,2}(\Omega)$ of
problem \eqref{eq:DirichletProblem}  is such that
$$u_f \in L^{m^{**}}(\Omega),\quad \textrm{where } m^{**}:= \tfrac{n m}{n-2m}.$$
\end{thm}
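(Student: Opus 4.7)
The plan is to split the statement into three parts and treat them in order: existence/uniqueness by Lax--Milgram, positivity by testing with $-u_f^-$, and the $L^{m^{**}}$-bound by a Boccardo--Orsina-type computation with test function $u_f^{2\alpha-1}$. For the first, consider the bilinear form
$$a(u,v):=\int_{\R^n}\nabla u\cdot\nabla v\,dx+\frac{C_{n,s}}{2}\iint_{\R^{2n}}\frac{(u(x)-u(y))(v(x)-v(y))}{|x-y|^{n+2s}}\,dx\,dy-\gamma\int_{\R^n}\frac{uv}{|x|^2}\,dx$$
on $\mathcal{X}^{1,2}(\Omega)$. By Theorem \ref{thm:Main2}, $\mathcal{H}(u)\leq\Lambda_n^{-1}\rho(u)^2$, so $a$ is continuous and coercive (since $\gamma<\Lambda_n$), with $a(u,u)\geq(1-\gamma/\Lambda_n)\rho(u)^2$. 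The source $v\mapsto\int_\Omega fv\,dx$ is bounded on $\mathcal{X}^{1,2}(\Omega)$ via the embedding $\mathcal{X}^{1,2}(\Omega)\hookrightarrow L^{2^*}(\Omega)$ combined with $f\in L^{(2^*)'}(\Omega)$, and Lax--Milgram delivers the unique $u_f$ solving \eqref{eq:uniquesol}.

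For positivity when $f\geq 0$, test \eqref{eq:uniquesol} with $v=-u_f^-\in\mathcal{X}^{1,2}(\Omega)$. The local term contributes $\int|\nabla u_f^-|^2\,dx$ (using $\nabla u_f^+\cdot\nabla u_f^-=0$ a.e.), the Hardy term yields $-\gamma\mathcal{H}(u_f^-)$, and the right-hand side is $-\int fu_f^-\,dx\leq 0$. For the nonlocal integral, writing $u_f=u_f^+-u_f^-$ and invoking the pointwise inequality $(u_f^+(x)-u_f^+(y))(u_f^-(x)-u_f^-(y))\leq 0$ (checked case by case on the signs of $u_f(x),u_f(y)$) produces a lower bound of $\tfrac{C_{n,s}}{2}[u_f^-]_s^2$. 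Summing, $(1-\gamma/\Lambda_n)\rho(u_f^-)^2\leq 0$, whence $u_f^-\equiv 0$. The strong maximum principle for $\mathcal{L}$, applied to $\mathcal{L}u_f=f+\gamma u_f/|x|^2\geq 0$, then promotes $u_f\geq 0$ to $u_f>0$ in $\Omega$.

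For the $L^{m^{**}}$-bound, linearity reduces us to $f\geq 0$, so $u_f\geq 0$. Fix
$$\alpha:=\frac{m(n-2)}{2(n-2m)},$$
tailored so that $\alpha\cdot 2^*=(2\alpha-1)m'=m^{**}$ and, decisively, $\Lambda_n(2\alpha-1)/\alpha^2=\gamma(m)$. The formal strategy is to test \eqref{eq:uniquesol} with $v=u_f^{2\alpha-1}$: the local term equals $\tfrac{2\alpha-1}{\alpha^2}\int|\nabla u_f^\alpha|^2\,dx$, the nonlocal double integral is nonnegative by monotonicity of $t\mapsto t^{2\alpha-1}$ on $[0,\infty)$, and the Hardy term becomes $-\gamma\int u_f^{2\alpha}/|x|^2\,dx$, which Theorem \ref{thm:BestConstant} applied to $u_f^\alpha$ absorbs into the gradient to give
$$\frac{\gamma(m)-\gamma}{\Lambda_n}\int_{\R^n}|\nabla u_f^\alpha|^2\,dx\leq\int_\Omega fu_f^{2\alpha-1}\,dx.$$
Sobolev on the left and H\"older on the right then yield $\|u_f\|_{L^{m^{**}}(\Omega)}\leq C(\gamma,m,n)\|f\|_{L^{m}(\Omega)}$. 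To make this rigorous one employs the truncations $T_k(u_f)^{2\alpha-1}\in\mathcal{X}^{1,2}(\Omega)\cap L^\infty(\Omega)$ as admissible test functions, derives the same bound for $\|T_k(u_f)\|_{L^{m^{**}}}$ uniformly in $k$, and passes to the limit by Fatou. The main obstacle is precisely this rigorous implementation: truncation breaks the clean identity $u_f\cdot u_f^{2\alpha-1}=u_f^{2\alpha}$, so the extra error $\int u_f\cdot T_k(u_f)^{2\alpha-1}/|x|^2\,dx-\int T_k(u_f)^{2\alpha}/|x|^2\,dx$ must be controlled by absorption, while the algebraic identity $\Lambda_n(2\alpha-1)/\alpha^2=\gamma(m)$ is what singles out $\gamma(m)$ as the sharp threshold for reaching the target exponent $m^{**}$.
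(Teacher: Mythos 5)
The existence/uniqueness and positivity parts of your argument match the paper's Theorem~\ref{thm:LaxMilagrmVariational}: Lax--Milgram on the shifted bilinear form with coercivity coming from the Hardy inequality (Theorem~\ref{thm:BestConstant}), and positivity by testing with the negative part and invoking the strong maximum principle. Your choice $\alpha:=\frac{m(n-2)}{2(n-2m)}$ and test power $2\alpha-1$ is only a relabeling of the paper's $\alpha_P=m(n-2)/(n-2m)$, $v=\varphi_k^{\alpha_P-1}$, and your algebraic identities checking $\alpha\,2^* = (2\alpha-1)m'=m^{**}$ and $\Lambda_n(2\alpha-1)/\alpha^2=\gamma(m)$ are correct.

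However, the rigorous implementation you sketch for the $L^{m^{**}}$ bound has a genuine gap, and you have in fact put your finger on it without resolving it. Testing the equation for $u_f$ with $v_k=T_k(u_f)^{2\alpha-1}$ gives, after the standard algebraic manipulations,
\begin{equation*}
\Big(\tfrac{2\alpha-1}{\alpha^2}-\tfrac{\gamma}{\Lambda_n}\Big)\rho\big(T_k(u_f)^\alpha\big)^2
\;\leq\;\int_\Omega f\,T_k(u_f)^{2\alpha-1}\,dx
\;+\;\gamma\,k^{2\alpha-1}\!\int_{\{u_f>k\}}\frac{u_f-k}{|x|^2}\,dx,
\end{equation*}
and it is the last term that is the problem. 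It is \emph{not} a Hardy-type quantity in $T_k(u_f)^\alpha$, so it cannot be absorbed into $\rho(T_k(u_f)^\alpha)^2$; and with $2\alpha-1\geq 1$ the prefactor $k^{2\alpha-1}$ grows without bound, so one cannot simply send $k\to\infty$: controlling $k^{2\alpha-1}\int_{\{u_f>k\}}(u_f-k)|x|^{-2}\,dx$ requires precisely the higher integrability of $u_f$ near the origin that the theorem aims to establish. This is a circularity, not a technicality you can wave away with ``absorption.''

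The paper circumvents this entirely by regularizing the \emph{equation} rather than truncating the \emph{test function}: it builds a non-decreasing sequence $\varphi_k\in\mathcal{X}^{1,2}_+(\Omega)\cap L^\infty(\R^n)$ solving problem~\eqref{eq:TruncatedProblem}, where the Hardy weight is replaced by the bounded kernel $\gamma\varphi_{k-1}/(|x|^2+1/k)$ and $f$ is truncated. Since $\varphi_k$ is bounded, the full power $\varphi_k^{\alpha_P-1}$ is an admissible test function in the $k$-th equation, there is no excess-truncation remainder, and one gets the uniform bound $\|\varphi_k\|_{L^{m^{**}}}\leq C\|f\|_{L^m}$ cleanly. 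Monotone convergence/Fatou (via Lemma~\ref{lem:convergenceApprox}, which identifies the pointwise limit with $u_f$) then transmits the estimate to $u_f$. If you want to salvage your direct route you would need an additional bootstrap (running the estimate on a sequence of exponents increasing to $m^{**}$), which is considerably more delicate than the paper's scheme.
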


\medskip

The next result deals with a source term $f \in L^{m}(\Omega)$ with $1<m<\tfrac{2n}{n+2}$. In this case variational arguments are forbidden but one can rely on the technique leading to {\it approximated solutions} by means of suitable truncation arguments.

\begin{thm}\label{thm:W1m*}
Let $f\in L^{m}(\Omega)$ with $1<m<\tfrac{2n}{n+2}$ be a positive function. If
$$0< \gamma < \gamma(m):= \dfrac{n(m-1)(n-2m)}{m^2},$$
\noindent then there exists a positive distributional  solution  $u_f \in W_{0}^{1,m^{*}}(\Omega)$ of \eqref{eq:DirichletProblem}, where $m^{*}:= \tfrac{n m}{n-m}$: namely  $u_f/|x|^2 \in L^1(\Omega)$ and it holds
\begin{equation} \label{eq:del_thm_1.5}
	\begin{split}
		\int_{\R^n}\nabla u_f\cdot\nabla v\,dx\, &
		+ \frac{C_{n,s}}{2} \iint_{\R^{2n}}\frac{(u_f(x)-u_f(y))(v(x)-v(y))}{|x-y|^{n+2s}}\,dx\,dy - \gamma \int_{\mathbb{R}^n}\dfrac{u_f v}{|x|^2} \, dx \\
		&= \int_{\Omega}fv \, dx, \quad \textrm{for every } v \in C^\infty_0(\Omega).
	\end{split}
\end{equation}
\end{thm}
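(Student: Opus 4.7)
Following the Boccardo--Gallouet strategy, I would approximate $f$ by bounded truncations, invoke Theorem~\ref{thm:EnergySolutions} to obtain energy solutions $u_k$, establish a uniform $W^{1,m^*}_0(\Omega)$ bound via a nonlinear test function (this is the step where the threshold $\gamma(m)$ enters), and pass to the limit in the distributional formulation. Concretely, set $f_k := T_k(f) = \min(f,k) \in L^\infty(\Omega) \subset L^{(2^*)'}(\Omega)$, and let $u_k \in \mathcal{X}^{1,2}(\Omega)$ be the unique positive weak solution of $(\mathrm{D}_{\gamma,f_k})$ provided by Theorem~\ref{thm:EnergySolutions}. A comparison argument -- subtracting the equations for $u_{k+1}$ and $u_k$, testing with $(u_k-u_{k+1})^+$, and using $\gamma < \Lambda_n$ combined with Theorem~\ref{thm:Main2} -- yields $u_k \leq u_{k+1}$, so $u_k \nearrow u$ pointwise for some $u \geq u_1 > 0$.

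\textbf{A priori estimate (the crux).} For $\alpha \in (0,1)$ to be chosen I test \eqref{eq:uniquesol} with the admissible function $\varphi_k := \alpha^{-1}[(1+u_k)^\alpha - 1] \in \mathcal{X}^{1,2}(\Omega)$. Setting $F_k := (1+u_k)^{(\alpha+1)/2} - 1 \in H^1_0(\Omega)$, one obtains
\begin{equation*}
\int_{\R^n}\nabla u_k\cdot\nabla\varphi_k\,dx = \frac{4}{(\alpha+1)^2}\int_{\R^n}|\nabla F_k|^2\,dx,
\end{equation*}
while the fractional bilinear form is nonnegative since $\varphi_k$ is a nondecreasing function of $u_k$. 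The elementary pointwise inequality $t\,[(1+t)^\alpha - 1] \leq [(1+t)^{(\alpha+1)/2}-1]^2$ for $t \geq 0$, together with the local Hardy inequality applied to $F_k$, yields
\begin{equation*}
\gamma\int_{\R^n}\frac{u_k\,\varphi_k}{|x|^2}\,dx \leq \frac{\gamma}{\alpha\,\Lambda_n}\int_{\R^n}|\nabla F_k|^2\,dx.
\end{equation*}
The choice $\alpha := n(m-1)/(n-2m) \in (0,1)$ (the unique value matching the H\"older--Sobolev exponents between $\|\varphi_k\|_{L^{m'}}$ and $\|F_k\|_{L^{2^*}}$) makes the coefficient $\frac{4}{(\alpha+1)^2} - \frac{\gamma}{\alpha\Lambda_n}$ strictly positive precisely when $\gamma < n(m-1)(n-2m)/m^2 = \gamma(m)$. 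Bounding the right-hand side of \eqref{eq:uniquesol} by $\|f\|_{L^m}\|\varphi_k\|_{L^{m'}}$, using the Sobolev embedding on $F_k$, and absorbing then give uniform bounds for $\|F_k\|_{H^1_0(\Omega)}$ and $\|u_k/|x|^2\|_{L^1(\Omega)}$; a further H\"older interpolation between $\int(1+u_k)^{\alpha-1}|\nabla u_k|^2 \leq C$ and $u_k \in L^{m^{**}}(\Omega)$ (a consequence of Sobolev applied to $F_k$) produces the desired uniform bound $\|u_k\|_{W^{1,m^*}_0(\Omega)} \leq C$.

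\textbf{Passage to the limit.} The uniform bound gives, up to a subsequence, $u_k \rightharpoonup u$ in $W^{1,m^*}_0(\Omega)$, $u_k \to u$ strongly in $L^q(\Omega)$ for every $q < m^{**}$ (Rellich), and Fatou delivers $u/|x|^2 \in L^1(\Omega)$. For any $v \in C^\infty_0(\Omega)$ the local gradient term in \eqref{eq:del_thm_1.5} passes by weak convergence, the Hardy and source integrals by dominated convergence along the monotone sequence, and the fractional bilinear form through the identity
\begin{equation*}
\frac{C_{n,s}}{2}\iint_{\R^{2n}}\frac{(u_k(x)-u_k(y))(v(x)-v(y))}{|x-y|^{n+2s}}\,dx\,dy = \int_{\R^n}u_k(x)\,(-\Delta)^sv(x)\,dx,
\end{equation*}
which makes sense because $(-\Delta)^s v \in L^1(\R^n) \cap L^\infty(\R^n)$ for $v \in C^\infty_0(\Omega)$; the $L^1$--convergence of $u_k$ then finishes the limit. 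Positivity follows from $u \geq u_1 > 0$.

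\textbf{Main obstacle.} The crux is the choice of $\alpha$: it must simultaneously match the H\"older--Sobolev scaling to close the estimate and yield an absorbable Hardy coefficient. The sharp threshold $\gamma(m) = n(m-1)(n-2m)/m^2$ is exactly what this optimisation forces, which explains its presence in the hypothesis; the remaining steps (comparison, Rellich compactness, and convergence of the nonlocal form via the smoothness of $v$) are essentially routine.
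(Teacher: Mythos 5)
Your argument is correct and, once one unwinds the notation, is essentially the same estimate as the paper's, but you package it in a mildly different (and arguably cleaner) way. The paper approximates by the recursive problems $\LL\varphi_k = \gamma\,\varphi_{k-1}/(|x|^2+1/k) + f_k$ with a lagged, regularized Hardy term, then tests with $(\varphi_k+\varepsilon)^{\alpha-1}-\varepsilon^{\alpha-1}$ for $\alpha=m(n-2)/(n-2m)\in(1,2)$ and lets $\varepsilon\to 0$; you instead truncate only the datum, solve the full Hardy problem $(\mathrm{D}_{\gamma,f_k})$ by Theorem~\ref{thm:EnergySolutions}, and test with $\alpha^{-1}[(1+u_k)^\alpha-1]$ for $\alpha=n(m-1)/(n-2m)\in(0,1)$. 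Note that your exponent is precisely the paper's $\alpha$ shifted by one, so the powers $F_k$ and $\varphi_k^{\alpha/2}$ match and the threshold
$$\gamma < \tfrac{4\alpha\Lambda_n}{(\alpha+1)^2} = \tfrac{n(m-1)(n-2m)}{m^2} = \gamma(m)$$
emerges identically; your pointwise inequality $t[(1+t)^\alpha-1]\leq[(1+t)^{(\alpha+1)/2}-1]^2$ is just AM--GM applied to $s+s^\alpha\geq 2s^{(\alpha+1)/2}$ with $s=1+t$, and the $+1$ shift does the regularization the paper achieves with $\varepsilon$. Two further small differences: (i) you discard the nonlocal energy by its sign (valid, since your test function is a nondecreasing function of $u_k$) and then apply only the local Hardy inequality to $F_k$; the paper instead keeps the nonlocal term via the algebraic inequality of \cite{AMPP} and invokes the mixed Hardy inequality. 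Since Theorem~\ref{thm:BestConstant} shows the mixed and local Hardy constants coincide, the two routes give exactly the same constant, so you lose nothing by dropping the fractional part. (ii) For the limit passage in the nonlocal term you pair $u_k$ with $(-\Delta)^s v\in L^1\cap L^\infty$ and use $L^1(\Omega)$-convergence, whereas the paper observes that the full bilinear form against a fixed $v\in C_0^\infty(\Omega)$ is a bounded functional on $W_0^{1,m^*}(\Omega)$ and uses weak convergence; both are valid. One point you state without proof but which deserves a line in a full write-up: the comparison $u_k\leq u_{k+1}$ requires testing $\LL(u_k-u_{k+1})-\gamma(u_k-u_{k+1})/|x|^2 = f_k-f_{k+1}\leq 0$ with $(u_k-u_{k+1})^+$ and absorbing the Hardy term using $\gamma<\Lambda_n$, and the admissibility of $\alpha^{-1}[(1+u_k)^\alpha-1]$ as a test function in $\mathcal{X}^{1,2}(\Omega)$ follows from the Lipschitz composition rule (the map $t\mapsto\alpha^{-1}[(1+t)^\alpha-1]$ is $1$-Lipschitz and vanishes at $0$).
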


\begin{rem}
	As we will see in the proof of Theorem \ref{thm:W1m*}, the solution $u_f$ is obtained as the limit of solutions of the regularized problems \eqref{eq:Ausiliario}.  We also prove that such a type of solution (usually referred to as SOLA solution) is unique. Moreover using the fact that $u_f \in W_{0}^{1,m^{*}}(\Omega)$, by a density argument it can be shown that \eqref{eq:del_thm_1.5} holds in a stronger sense, that is $u_f \in W_{0}^{1,m^{*}}(\Omega)$ and 
it holds
\begin{equation*} 	\begin{split}
		\int_{\R^n}\nabla u_f\cdot\nabla v\,dx\, &
		+ \frac{C_{n,s}}{2} \iint_{\R^{2n}}\frac{(u_f(x)-u_f(y))(v(x)-v(y))}{|x-y|^{n+2s}}\,dx\,dy - \gamma \int_{\mathbb{R}^n}\dfrac{u_f v}{|x|^2} \, dx \\
		&= \int_{\Omega}fv \, dx, \quad \textrm{for every } v \in W_0^{1,(m^*)'}(\Omega),
	\end{split}
\end{equation*}
where $(m^*)'=m^*/(m^*-1)$. 
\end{rem}

\medskip 

The last result concerns the case of $f \in L^{1}(\Omega)$, where we cannot ensure (in general) that $f \in \mathbb{X} = \left(\mathcal{X}^{1,2}(\Omega)\right)'$, and thus variational arguments are once again forbidden. As for the local case \cite{AP, PeralBook}, we are able to provide an optimal solvability criteria. Nevertheless, due to the nature of $\mathcal{L}$, which makes the task of finding explicit solutions quite hard, our condition is somehow implicit. In order to give the existence result, we need to consider solutions of an auxiliary problem. 
Let $g \in L^\infty (\Omega)$ and let $w \in \mathcal{X}^{1,2}(\Omega)$ be the unique weak solution of 
\begin{equation}\label{eq:DirichletProblemaux}
	\left\{ \begin{array}{rl}
		-\Delta w + (-\Delta)^s w= g & \mathrm{in } \,\Omega,\\
		w=0 & \mathrm{in } \, \mathbb{R}^{n}\setminus \Omega,
	\end{array}\right.
\end{equation}
in the sense of \eqref{eq:uniquesol}, with $\gamma=0$. 	We note that the unique solution $w$ belongs to $L^\infty(\Omega)$. Now, we introduce the notion of solution in this setting, adapting the classical concept of duality.

\begin{defn}\label{defin:duality}
	Let $f \in L^1(\Omega)$, and let $u\in L^1(\Omega)$. We say that
	$u$ is a duality solution of \eqref{eq:DirichletProblem}, if the following conditions hold:
	\begin{enumerate}
		\item $u\equiv 0$ a.e.\,in $\R^n\setminus\Omega$ and $u/ |x|^2 \in L^1(\Omega)$;
		
		\item for every $g \in L^\infty (\Omega)$ and $w$ solution to  \eqref{eq:DirichletProblemaux}, we have
			\begin{equation*} 
			\begin{split}
				\int_{\Omega}g u \, dx=\gamma \int_{\Omega} \dfrac{u}{|x|^2} w \, dx  + \int_{\Omega} f w \, dx.
			\end{split}
		\end{equation*}
	\end{enumerate}

\end{defn}

\begin{rem}
	We notice that any function $u$ which solves \eqref{eq:DirichletProblem} in the sense of Theorems \ref{thm:EnergySolutions} and \ref{thm:W1m*} is also duality solutions 
	(in the sense of Definition \ref{defin:duality}).
\end{rem}
With Definition \ref{defin:duality} at hand, we can now state the \emph{optimal-solvability result}
for problem \eqref{eq:DirichletProblem}.
Before doing this, we first fix a notation: \emph{we denote by $\Phi_\Omega$ the unique
	weak solution} in $\mathcal{X}^{1,2}(\Omega)$ of the following problem
\begin{equation*} 
	\begin{cases}
		\displaystyle \LL u = \gamma\frac{u}{|x|^2}+1 & \text{in $\Omega$}, \\
		u = 0 &\text{in $\R^n\setminus\Omega$},
	\end{cases}
\end{equation*} 
We explicitly observe that, since the constant function $f\equiv 1$ is positive
and bounded on $\Omega$, the existence and uniqueness of $\Phi_\Omega$
follow from Theorem \ref{thm:EnergySolutions}; moreover, we also have 
$$\text{$\Phi_\Omega > 0$ a.e.\,in $\Omega$}.$$
\begin{thm} \label{thm:solvabilityL1main}
Let $f\in L^1(\Omega),\,\text{$f\geq 0$ a.e.\,in $\Omega$}$. Then, there exists a 
 positive duality solution $u \in L^1(\Omega)$ of problem \eqref{eq:DirichletProblem} 
 in the sense of Definition \ref{defin:duality} if and only if
 \begin{equation} \label{eq:conditionfPhi}
 	\int_\Omega f\Phi_\Omega\,dx < \infty.
 \end{equation}
 In this case, one can also prove that $u$ satisfies the following properties:
 \begin{itemize}
    \item[{1)}] for every $k\in\mathbb{N}$, one has $T_k(u)\in \mathcal{X}^{1,2}(\Omega)$ (see \eqref{eq:defTk} for the definition of $T_k$); 
     \item[{2)}] $u|_\Omega$ is uniformly bounded in $W_0^{1,p}(\Omega)$ for every $p < \frac{n}{n-1}$.
    \vspace*{0.1cm}
 \end{itemize}
\end{thm}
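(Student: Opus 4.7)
The plan is to characterize existence through a monotone approximation of the datum. Set $f_k := \min(f,k) \in L^\infty(\Omega)$, so that $0 \leq f_k \uparrow f$ pointwise. By Theorem \ref{thm:EnergySolutions}, each $f_k$ admits a unique positive energy solution $u_k \in \mathcal{X}^{1,2}(\Omega)$ of \eqref{eq:DirichletProblem} with datum $f_k$; by the Remark following Definition \ref{defin:duality}, $u_k$ is also a duality solution. Since the bilinear form associated with $\mathcal{L} - \gamma/|x|^2$ is coercive on $\mathcal{X}^{1,2}(\Omega)$ (because $\gamma < \Lambda_n$ in view of Theorem \ref{thm:Main2}), a standard weak maximum principle (test the equation for the difference with its negative part, then absorb via Hardy) yields $u_{k+1} \geq u_k$; hence $u_k \uparrow u$ pointwise for some measurable $u : \Omega \to [0,+\infty]$.

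The fundamental identity on which everything hinges is
\[
\int_\Omega u_k \, dx = \int_\Omega f_k\,\Phi_\Omega \, dx,
\]
obtained by testing the weak equation for $u_k$ against $\Phi_\Omega \in \mathcal{X}^{1,2}(\Omega)$ and, symmetrically, the equation for $\Phi_\Omega$ against $u_k$, then cancelling the common Hardy contributions $\gamma \int_\Omega u_k \Phi_\Omega/|x|^2\,dx$ thanks to the symmetry of both the local and the nonlocal bilinear forms. Monotone convergence upgrades this to $\int_\Omega u\,dx = \int_\Omega f\,\Phi_\Omega\,dx$ in $[0,+\infty]$, giving sufficiency of \eqref{eq:conditionfPhi} at once: if the right-hand side is finite, $u \in L^1(\Omega)$, and passing to the limit $k\to\infty$ in the duality identity written for each $u_k$ recovers the one in Definition \ref{defin:duality}(2) for $u$. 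The limit passage uses monotone convergence when $g \geq 0$ (since then $w \geq 0$ by the maximum principle for $\mathcal{L}$, and all three terms are monotone in $k$), and extends to general $g \in L^\infty(\Omega)$ by splitting $g = g^+ - g^-$; the property $u/|x|^2 \in L^1(\Omega)$ from Definition \ref{defin:duality}(1) will come from the a priori estimates below.

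For necessity, given any positive duality solution $\tilde u \in L^1(\Omega)$, the goal is to show $\tilde u \geq u_k$ for every $k$, which combined with the fundamental identity gives $\int_\Omega f\,\Phi_\Omega = \int_\Omega u \leq \int_\Omega \tilde u < \infty$. The difference $v_k := \tilde u - u_k$ formally solves $\mathcal{L}v_k - \gamma v_k/|x|^2 = f - f_k \geq 0$ with zero exterior data, and subtracting the duality identities for $\tilde u$ and $u_k$ one gets $\int_\Omega g\,v_k = \gamma \int_\Omega v_k\,w/|x|^2 + \int_\Omega (f - f_k)\,w$ for every admissible $(g,w)$. Choosing $g = \chi_{\{v_k<0\}} \in L^\infty(\Omega)$ together with its corresponding non-negative $w$ (obtained via the maximum principle for $\mathcal{L}$), and exploiting the $L^1$ bound on $v_k/|x|^2$ inherited from Definition \ref{defin:duality}(1), the strict inequality $\gamma < \Lambda_n$ is designed to absorb the Hardy term and force $v_k \geq 0$ a.e.

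To obtain the regularity assertions, let $\phi \in \mathcal{X}^{1,2}(\Omega) \cap L^\infty(\Omega)$ be the positive solution of \eqref{eq:DirichletProblemaux} with $g\equiv 1$. A symmetric testing of $u_k$ against $\phi$ (analogous to the one producing the fundamental identity) yields
\[
\gamma\int_\Omega u_k\,\phi/|x|^2\,dx \;=\; \int_\Omega u_k\,dx - \int_\Omega f_k\,\phi\,dx \;\leq\; C,
\]
and, since $\phi$ is continuous and strictly positive on a neighborhood of the origin (by elliptic regularity and the strong maximum principle for $\mathcal{L}$), this delivers a uniform $L^1$ bound on $u_k/|x|^2$. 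Standard Boccardo--Gallou\"et truncation applied to $T_j(u_k) \in \mathcal{X}^{1,2}(\Omega)$ in the equation for $u_k$, with the Hardy term absorbed through the $L^1$ estimate just obtained, then gives (a) uniform control of $T_j(u_k)$ in $\mathcal{X}^{1,2}(\Omega)$ for each fixed $j$, and (b) uniform bounds on $u_k$ in $W^{1,p}_0(\Omega)$ for every $p < n/(n-1)$; these properties survive the limit $k \to \infty$ by a.e. convergence and lower semicontinuity of the norms. The main obstacle I expect is precisely the positivity/comparison principle used in the necessity step: extracting sufficient information from the duality definition alone, when $\tilde u$ lacks the regularity to be used as its own test function, requires a careful quantitative exploitation of the gap $\Lambda_n - \gamma > 0$ in the Hardy inequality.
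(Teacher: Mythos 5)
Your sufficiency argument is sound and in fact slightly streamlines the paper's: truncating only $f$ and working with the genuine energy solutions $u_k$ of $\LL u_k - \gamma u_k/|x|^2 = f_k$ gives the \emph{exact} identity $\int_\Omega u_k\,dx = \int_\Omega f_k\Phi_\Omega\,dx$ by symmetry of the coercive form, whereas the paper iteratively truncates the Hardy coefficient as well ($\LL\varphi_k = \gamma\varphi_{k-1}/(|x|^2+1/k)+f_k$) and only obtains an inequality $\int_\Omega\varphi_k\,dx\leq\int_\Omega f_k\Phi_\Omega\,dx$. Your derivation of the $L^1$ bound on $u/|x|^2$ via the auxiliary $\phi$ solving $\LL\phi=1$ parallels the paper's, which invokes the Weak Harnack inequality of \cite{GarainKinnunen} rather than continuity plus the strong maximum principle to push $\phi$ away from zero near the origin. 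The tests producing properties 1) and 2) (truncations $T_j$ and the Boccardo--Gallou\"et power $1-(1+\varphi_k)^{2\beta-1}$) are exactly what the paper uses.

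The necessity direction is where your proposal has a genuine gap, one you yourself flag. The attempted comparison $\tilde u \geq u_k$ via $g=\chi_{\{v_k<0\}}$ cannot be closed by ``absorbing the Hardy term through $\gamma<\Lambda_n$'': the identity obtained by subtracting the two duality relations,
\begin{equation*}
\int_{\{v_k<0\}} v_k\,dx \;=\; \gamma\int_\Omega\frac{v_k\,w}{|x|^2}\,dx + \int_\Omega (f-f_k)\,w\,dx,
\end{equation*}
is \emph{bilinear} in $(v_k,w)$ and contains no square to which the Hardy inequality could be applied, so the sign of $\gamma\int v_k w/|x|^2\,dx$ is out of control; moreover $\tilde u$ lacks the regularity to justify testing with $(\tilde u-u_k)^-$. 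The paper sidesteps all of this. It constructs a non-decreasing sequence $\phi_k\uparrow\Phi_\Omega$ via $\LL\phi_k=g_k:=\gamma\phi_{k-1}/(|x|^2+1/k)+1$ and plugs $(g_k,\phi_k)$ directly into the \emph{duality identity of $\tilde u$}; since $\phi_{k-1}/(|x|^2+1/k)\le\phi_k/|x|^2$ and $\tilde u\geq 0$, the Hardy contribution comes out \emph{nonpositive on its own}, yielding $\int_\Omega f\phi_k\,dx\leq\|\tilde u\|_{L^1(\Omega)}$ without any comparison between $\tilde u$ and an approximate solution, and monotone convergence then gives \eqref{eq:conditionfPhi}. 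Your target inequality is right; the clean route is to test the duality definition of $\tilde u$, not to try to prove a comparison principle for a merely-$L^1$ duality solution.
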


\medskip

\indent The paper is organized as follows: in Section \ref{sec.Prel} we firstly describe the natural functional setting associated with $\mathcal{L}$ (see Subsection \ref{subsec:setting}), we then review all the regularity results connected with our problem (see Subsection \ref{subsec:Dirichlet}) and then we prove both Theorem \ref{thm:BestConstant} and Theorem \ref{thm:Main2} (see Subsection \ref{subsec:MixedHardy}). In Section \ref{sec:PDE} we move to the study of \eqref{eq:DirichletProblem} proving Theorem \ref{thm:EnergySolutions}, Theorem \ref{thm:W1m*} and Theorem \ref{thm:solvabilityL1main}.

\section{The functional setting and Preliminary results}\label{sec.Prel} 
  The aim of this section is to introduce the adequate functional setting
  for the study of the mixed operator $\LL = -\Delta+(-\Delta)^s$, and
  to collect some known results concerning the $\LL$-Dirichlet problem
  \begin{equation}\label{eq:DirPb}\tag{{$\mathrm{D}_f$}}
   \begin{cases}
  \LL u = f & \text{in $\Omega$}, \\
  u \equiv 0 & \text{in $\R^n\setminus\Omega$}
  \end{cases}
  \end{equation}
  where $\Omega\subseteq\R^n$ is a bounded domain and $f\in L^m(\Omega)$ for some $m\geq 1$. We also study mixed Hardy type inequality, by proving Theorem \ref{thm:BestConstant} and \ref{thm:Main2}.
  \vspace*{0.1cm}
  
  \subsection{The functional setting}\label{subsec:setting}
Let~$s\in (0,1)$ be fixed, and let
$\varnothing\neq \Omega\subseteq\R^n$ (with $n\geq 3$) be an \emph{arbitrary} open set,
  not necessarily bounded. We define the function space
  $\mathcal{X}^{1,2}(\Omega)$ as the com\-ple\-tion
  of $C_0^\infty(\Omega)$, that is the set of smooth functions with compact support in $\Omega$, with respect to the
  \emph{mixed global norm} 
  $$\rho(u) := \left(\|\nabla u\|^2_{L^2(\R^n)}+\frac{C_{n,s}}{2}[u]^2_s\right)^{1/2},\qquad u\in C_0^\infty(\Omega),$$
  where $[u]_s$ denotes the so-called \emph{Gagliardo seminorm} of $u$ (of order $s$), that is,
  $$[u]_s := \bigg(\iint_{\R^{2n}}\frac{|u(x)-u(y)|^2}{|x-y|^{n+2s}}\,dx\,dy\bigg)^{1/2}.$$
  \begin{rem} \label{rem:X12prop}
   A couple of observations concerning the space $\mathcal{X}^{1,2}(\Omega)$ are in order.
   \begin{enumerate}
    \item The norm $\rho(\cdot)$ is induced by the scalar product (or bilinear form)
    \begin{equation*}
    \mathcal{B}(u,v) := \int_{\R^n}\nabla u\cdot\nabla v\,dx
    + \frac{C_{n,s}}{2} \iint_{\R^{2n}}\frac{(u(x)-u(y))(v(x)-v(y))}{|x-y|^{n+2s}}\,dx\,dy,
    \end{equation*}
     where $\cdot$ denotes the usual scalar product in the Euclidean space
    $\R^n$, while 
    $$C_{n,s}:=2^{2s}\pi^{-\frac{n}{2}}\Gamma((n+2s)/2)/|\Gamma(-s)|;$$ 
    thus, $\mathcal{X}^{1,2}(\Omega)$
    is endowed with a structure of a (real) \emph{Hilbert space}.
    \medskip
    
    \item Even if a function $u\in C_0^\infty(\Omega)$ 
  \emph{identically vanishes outside~$\Omega$}, it is often still convenient to consider 
  in the definition of $\rho(\cdot)$ the $L^2$-norm of $\nabla u$
  \emph{on the whole of $\R^n$}, rather than restricted to~$\Omega$ 
  {(}though of course the result would be the same{)}: this is to stress that the elements in
  $\mathcal{X}^{1,2}(\Omega)$
  are functions defined \emph{on the entire space $\R^n$} and not only on~$\Omega$ 
  (and this is consistent with the nonlocal
  nature of the operator $\LL$). The benefit of having this global functional setting
  is that these functions can be \emph{globally approximated on $\R^n$} 
  {(}with respect to the norm $\rho(\cdot)${)}
  by smooth functions with compact support in $\Omega$.

  \noindent
  In particular, when $\Omega\neq \R^n$, we will
  see that this \emph{global} definition of 
  $\rho(\cdot)$ implies that the functions in $\mathcal{X}^{1,2}(\Omega)$ naturally
  satisfy the nonlocal Dirichlet condition 
    \begin{equation} \label{eq:nonlocalDirX12}
   \text{$u\equiv 0$ a.e.\,in $\R^n\setminus\Omega$ for every $u\in\mathcal{X}^{1,2}(\Omega)$}.
   \end{equation}
   \end{enumerate}
  \end{rem}
  In order to better understand the \emph{nature} of the space
  $\mathcal{X}^{1,2}(\Omega)$ (and to recognize the validity of
  \eqref{eq:nonlocalDirX12}), we distinguish two cases.
  \medskip
  
  (i)\,\,If \emph{$\Omega$ is bounded}. In this case we first recall the following
  inequality, which
   expresses the \emph{continuous embedding}
   of $H^1(\R^n)$ into $H^s(\R^n)$ (see, e.g., \cite[Proposition~2.2]{DRV}):
   there exists a constant $\mathbf{c} = \mathbf{c}(n,s) > 0$ such that,
   for every $u\in C_0^\infty(\Omega)$, one has
   \begin{equation} \label{eq:embeddingH1Hs}
    [u]^2_s \leq \mathbf{c}(n,s)\|u\|_{H^1(\R^n)}^2 = \mathbf{c}(n,s)\big(\|u\|^2_{L^2(\R^n)}
   + \|\nabla u\|^2_{L^2(\R^n)}\big).
   \end{equation}
   This, together with the classical \emph{Poincar\'e inequality}, implies that
   $\rho(\cdot)$ and the full $H^1$-norm in $\R^n$
   are \emph{actually equivalent} on the space $C^\infty_0(\Omega)$, and hence
   \begin{align*}
    \mathcal{X}^{1,2}(\Omega) & = \overline{C_0^\infty(\Omega)}^{\,\,\|\cdot\|_{H^1(\R^n)}} \\
    & = \{u\in H^1(\R^n):\,\text{$u|_\Omega\in H_0^1(\Omega)$ and 
    $u\equiv 0$ a.e.\,in $\R^n\setminus\Omega$}\}.
   \end{align*}
   
   (ii)\,\,If \emph{$\Omega$ is unbounded}. In this case, even if the \emph{embedding inequality}
   \eqref{eq:embeddingH1Hs} is still satisfied, the Poincar\'e inequality
   \emph{does not hold}; hence, the norm $\rho(\cdot)$ is no more equivalent to the full $H^1$-norm in $\R^n$,
   and $\mathcal{X}^{1,2}(\Omega)$ \emph{is not a subspace of $H^1(\R^n)$}.
   
   On the other hand, by the classical \emph{Sobolev inequality}, there exists 
   a constant $\mathcal{S} = \mathcal{S}_n > 0$, 
   \emph{independent of the open set $\Omega$}, such that
   \begin{equation} \label{eq:Sobolevmista}
    \mathcal{S}_n\|u\|^2_{L^{2^*}(\R^n)} \leq \|\nabla u\|^2_{L^2(\R^n)}
    \leq \rho(u)^2\qquad\text{for every $u\in C_0^\infty(\Omega)$}.
   \end{equation}   
   As a consequence, we deduce that
   $\mathcal{X}^{1,2}(\Omega)\hookrightarrow L^{2^*}(\R^n).$
   \begin{rem} \label{rem:XembeddedLtwostar}
   We explicitly notice that, since the \emph{mixed
   Sobolev-type inequality} \eqref{eq:Sobolevmista} holds \emph{for every open set $\Omega\subseteq\R^n$}
   (bounded or not), we always have
   \begin{equation} \label{eq:contEmbXL2star}
    \mathcal{X}^{1,2}(\Omega)\hookrightarrow L^{2^*}(\R^n).
   \end{equation}
   Furthermore, by exploiting
   the density of $C_0^\infty(\Omega)$ in $\mathcal{X}^{1,2}(\Omega)$,
   we can e\-xtend
   i\-ne\-qua\-lity
   \eqref{eq:Sobolevmista} to \emph{every function $u\in \mathcal{X}^{1,2}(\Omega)$}, thereby obtaining
   $$\mathcal{S}_n\|u\|_{L^{2^*}(\R^n)}^2 
    \leq \rho(u)^2 = \|\nabla u\|_{L^2(\R^n)}^2
    + \frac{C_{n,s}}{2}[u]^2_s\quad\text{for every $u\in\mathcal{X}^{1,2}(\Omega)$}.$$
   \end{rem}
   Due to its relevance in the sequel, we also introduce a distinguished notation for the cone 
   of the non-negative functions in $\mathcal{X}^{1,2}(\Omega)$: we set
   $$\mathcal{X}^{1,2}_+(\Omega) = 
   \big\{u\in\mathcal{X}^{1,2}(\Omega):\,\text{$u\geq 0$ a.e.\,in $\Omega$}\big\}.$$
   
\subsection{The $\LL$-Dirichlet problem} \label{subsec:Dirichlet}
Now we have introduced
the space $\mathcal{X}^{1,2}(\Omega)$, we present
some known results concerning \emph{existence and optimal regularity of solutions}
for the $\LL$-Di\-richlet problem \eqref{eq:DirPb}. To this end, we distinguish two different cases.
\begin{align*}
\mathrm{i)}&\,\,\text{$f\in L^m(\Omega)$ for some $m\geq (2^*)' = \frac{2n}{n+2}$}; \\[0.1cm]
\mathrm{ii)}&\,\,\text{$f\in L^m(\Omega)$ for some $1<m<(2^*)'$}.
\end{align*}
\textbf{Case i) - The variational case.} In this case we observe that, by combining the \emph{co\-n\-ti\-nuo\-us embedding} \eqref{eq:contEmbXL2star} of $\mathcal{X}^{1,2}(\Omega)$
with
the H\"older inequality, we have
$$\int_\Omega |fv|\,dx <+\infty\quad\forall\,\,v\in\mathcal{X}^{1,2}(\Omega),$$
and thus $f\in \mathbb{X} = (\mathcal{X}^{1,2}(\Omega))'$;
in view of this fact, \emph{existence and uniqueness} of solutions for problem 
\eqref{eq:DirPb} easily follow
from the Lax-Milgram Theorem. More precisely, we present the following theorem, which collects most of the published results regarding the existence, uniqueness and regularity of solutions to problem \eqref{eq:DirPb}.
   \begin{thm} \label{thm:generalDirichlet}
  Let $\Omega\subseteq\R^n$ be a bounded open set with smooth boundary,
  and let $f\in L^m(\Omega)$ for some $m\geq (2^*)'$. 
  Then, \emph{there exists a unique weak solution} $u_f\in\mathcal{X}^{1,2}(\Omega)$ of 
  problem \eqref{eq:DirPb}, 
  in the following sense:
  \begin{equation}\label{eq:defSolVariational}
   \mathcal{B}(u_f,v) = \int_\Omega fv\,dx\qquad\forall\,\,v\in\mathcal{X}^{1,2}(\Omega).
  \end{equation}
  Furthermore, the following properties hold. 
  \begin{itemize}
   \item[i)] \emph{(}see \cite[Theorem\,4.7]{BDVV}\emph{)} If $f\in L^m(\Omega)$ with 
   $m > n/2$, then $u_f\in L^\infty(\R^n)$.
   \vspace*{0.05cm}
   
   \item[ii)] \emph{(}see \cite[Theorem\,2.3]{ArRa}\emph{)} If $(2^*)'\leq m \leq n/2$, 
   then $u_f \in L^{m^{**}}(\Omega)$, where
	$$m^{**} = \frac{nm}{n-2m}.$$
	
 \item[iii)] \emph{(}see \cite[Theorem\,1.1]{AntoCozzi}\emph{)} 
   If $f\in L^m(\Omega)$ with $m > n$, then 
   $$\text{$u_f\in C^{1,\theta}(\overline{\Omega})$ 
   for some
 $\theta\in (0,1)$}.$$
 
 \item[iv)] \emph{(}see \cite[Theorem\,B.1]{BDVV3}\emph{)} If $f\in C^\alpha(\overline{\Omega})$
 for some $\alpha\in (0,1)$, and if $2s+\alpha < 1$ \emph{(}hence, in particular, $0<s<1/2$\emph{)}, then
 $u_f\in C^{2,\alpha}(\overline{\Omega})$.
 \vspace*{0.05cm}
 
\item[v)] \emph{(}see, e.g., \cite[Theorem\,1.2]{BDVV} 
and \cite[Corollary\,3.3 and Rem.\,3.4]{BMV} \emph{)}
 If $f\geq 0$ a.e.\,in $\Omega$, then we have
 $u_f\geq 0$ a.e.\,in $\Omega$; moreover,
 either
 $u_f\equiv 0$ or $u_f > 0$ in $\Omega$.
\end{itemize}
 \end{thm}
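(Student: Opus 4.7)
The plan is to split the proof into two blocks: first the existence and uniqueness of the weak solution, and then the five qualitative properties, which are essentially quoted from the literature.

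For existence and uniqueness, I would apply the Lax--Milgram Theorem on the Hilbert space $\mathcal{X}^{1,2}(\Omega)$. The bilinear form $\mathcal{B}(u,v)$ satisfies $\mathcal{B}(u,u) = \rho(u)^2$ by construction, so coercivity is automatic, and continuity of $\mathcal{B}$ follows from Cauchy--Schwarz applied separately to the gradient and Gagliardo pieces. Since $\Omega$ is bounded and $m \geq (2^*)'$, H\"older's inequality together with the continuous embedding $\mathcal{X}^{1,2}(\Omega)\hookrightarrow L^{2^*}(\R^n)$ from Remark \ref{rem:XembeddedLtwostar} shows that
\[
\Big|\int_\Omega fv\,dx\Big| \leq \|f\|_{L^{(2^*)'}(\Omega)}\|v\|_{L^{2^*}(\R^n)} \leq C\|f\|_{L^m(\Omega)}\rho(v),
\]
so the right-hand side defines a continuous linear functional on $\mathcal{X}^{1,2}(\Omega)$, and Lax--Milgram produces the unique $u_f\in\mathcal{X}^{1,2}(\Omega)$ satisfying \eqref{eq:defSolVariational}.

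For the five regularity and sign properties, each is taken verbatim from a published source and can be invoked as stated, but the underlying ideas are straightforward to describe. Property (i) is a Stampacchia/Moser iteration based on the truncations $(u-k)_+$ as test functions, using the elementary fact that the nonlocal bilinear form satisfies $\mathcal{B}_s(u,(u-k)_+) \geq [(u-k)_+]_s^2$, so the fractional piece only helps. Property (ii) is an analogous iteration in the sub-$L^\infty$ range, testing with suitable powers of $T_k(u)$ and letting $k\to\infty$ after using the mixed Sobolev inequality \eqref{eq:Sobolevmista} on the local side. Property (iii) is the mixed $C^{1,\theta}$ theory of \cite{AntoCozzi}, which relies on a De Giorgi--Nash--Moser type result for $\mathcal{L}$ and a boundary barrier construction. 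Property (iv) is a bootstrap: once $u\in C^\alpha(\overline\Omega)$, the assumption $2s+\alpha<1$ ensures $(-\Delta)^s u\in C^\alpha(\overline\Omega)$, whence classical Schauder gives $u\in C^{2,\alpha}(\overline\Omega)$. Property (v) combines the weak maximum principle for $\mathcal{L}$ (obtained by testing the weak formulation with $u_-\in \mathcal{X}^{1,2}(\Omega)$, as in \cite{BDVV}) with the strong minimum principle of \cite{BMV}.

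Since the statement is a synthesis, there is essentially no single hard step: the only piece requiring a genuine argument internal to this paper is the Lax--Milgram application, and that is immediate once one has the mixed Sobolev inequality and the identification of $\mathcal{B}(u,u)$ with $\rho(u)^2$. The main care needed is bookkeeping: matching the normalization constants (in particular the $C_{n,s}/2$ in front of the Gagliardo form) and checking that each cited reference is stated in the present functional framework $\mathcal{X}^{1,2}(\Omega)$, so that properties (i)--(v) can be transplanted without modification.
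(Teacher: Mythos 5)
Your proposal matches the paper's treatment exactly: the paper establishes existence and uniqueness by Lax--Milgram on $\mathcal{X}^{1,2}(\Omega)$, using the continuous embedding $\mathcal{X}^{1,2}(\Omega)\hookrightarrow L^{2^*}(\R^n)$ and H\"older's inequality to show $f\in(\mathcal{X}^{1,2}(\Omega))'$, and then presents items (i)--(v) purely as citations to the listed references, just as you do. One minor nit in your sketch of (iv): the bootstrap to $C^{2,\alpha}$ starts from the $C^{1,\theta}(\overline\Omega)$ regularity furnished by item (iii) (not from $u\in C^\alpha$), which makes $(-\Delta)^s u$ well defined and $C^\alpha$ under $2s+\alpha<1$, but since the paper quotes \cite{BDVV3} without reproducing the argument this does not affect the validity of your proposal.
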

  
 \noindent \textbf{Case ii) - The nonvariational case.} In this case, the inclusion $L^m(\Omega)\subseteq
 \mathbb{X}=(\mathcal{X}^{1,2}(\Omega))'$ \emph{does not hold}, and we cannot directly 
 apply the Lax-Milgram Theorem
 to study the e\-xi\-stence of solutions for problem \eqref{eq:DirPb}. As in the purely local setting, 
 \emph{existence and improved regularity} are proved at the same time
 by an approximation argument.
 \begin{thm}[{See \cite[Theorem 2.1]{ArRa}}] \label{thm:ArRaNonVar}
  Let $\Omega\subseteq\R^n$ be a bounded open set with smooth boundary,
  and let $f\in L^m(\Omega)$ for some $1<m< (2^*)'$, $f\geq 0$ a.e.\,in $\Omega$.
  Then, there exists
  a positive solution $u_f$ of problem \eqref{eq:DirPb}, in the following sense
  \begin{itemize}
   \item[a)] $u_f\in W_0^{1,m^*}(\Omega)$, with $m^* = nm/(n-m)$;
   \vspace*{0.05cm}
   
   \item[b)] for every $v\in W_0^{1,(m^*)'}(\Omega)\cap L^{m'}(\Omega)$ with \emph{compact support}, we have
   $$\mathcal{B}(u_f,v) = \int_\Omega fv\,dx.$$
\end{itemize}
 \end{thm}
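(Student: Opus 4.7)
The plan is to use the classical Boccardo--Gallou\"et approximation scheme, suitably adapted so as to absorb the nonlocal part of $\mathcal{L}$ thanks to its monotonicity. First I truncate: set $f_k := T_k(f) \in L^\infty(\Omega)$, so that $0 \leq f_k \nearrow f$ a.e.\ and in $L^m(\Omega)$. By Theorem \ref{thm:generalDirichlet} (applicable since $f_k \in L^\infty(\Omega) \subset \mathbb{X}$), there is a unique $u_k \in \mathcal{X}^{1,2}(\Omega)$ solving $\mathcal{L}u_k = f_k$ weakly, and by item v) we have $u_k \geq 0$; by linearity and comparison the sequence $\{u_k\}$ is monotone non-decreasing, so $u_k \to u_f$ a.e.\ in $\R^n$ for some $u_f\geq 0$.

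The crucial step is to obtain uniform a priori bounds on $u_k$ in $W_0^{1,m^*}(\Omega)$. For this I would test the equation against $\varphi(u_k) := 1 - (1+u_k)^{-\alpha}$ with a suitable exponent $\alpha = \alpha(m,n) > 0$ chosen so that the bootstrap closes at $m^*$. This test function lies in $\mathcal{X}^{1,2}(\Omega)$ (composition of $u_k$ with a Lipschitz function vanishing at $0$), is non-decreasing, and bounded by $1$. The decisive observation is that
\begin{equation*}
\frac{C_{n,s}}{2}\iint_{\R^{2n}} \frac{(u_k(x)-u_k(y))\bigl(\varphi(u_k(x))-\varphi(u_k(y))\bigr)}{|x-y|^{n+2s}}\, dx\, dy \geq 0,
\end{equation*}
because $\varphi$ is monotone; hence the nonlocal term can be discarded from below, leaving the purely local inequality
\begin{equation*}
\alpha \int_\Omega \frac{|\nabla u_k|^2}{(1+u_k)^{\alpha+1}}\, dx \leq \int_\Omega f_k\, \varphi(u_k)\, dx \leq \|f\|_{L^m(\Omega)}\,\|\varphi(u_k)\|_{L^{m'}(\Omega)}.
\end{equation*}
Applying H\"older's inequality with exponents tailored to $\alpha$, together with the mixed Sobolev embedding \eqref{eq:Sobolevmista}, and rearranging yields $\|u_k\|_{W_0^{1,m^*}(\Omega)} \leq C$ uniformly in $k$ (the computation mirrors exactly the purely local Boccardo--Gallou\"et argument).

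With this estimate in hand, a standard compactness argument gives $u_k \rightharpoonup u_f$ weakly in $W_0^{1,m^*}(\Omega)$ and strongly in $L^q(\Omega)$ for every $q < m^*$, with the full sequence converging thanks to monotonicity. Passing to the limit in the local term of the bilinear form is routine. For the nonlocal term, the restriction in b) to compactly supported $v \in W_0^{1,(m^*)'}(\Omega)\cap L^{m'}(\Omega)$ is exploited to split the double integral into an interior piece (where the strong $L^q$-convergence of $u_k$ and the compact support of $v$ make the passage immediate) and a tail piece in which the kernel $|x-y|^{-n-2s}$ decays polynomially and dominated convergence applies via the uniform control of $u_k$ in $L^{m^*}(\Omega)$.

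The main obstacle, to my mind, is the sharp calibration of $\alpha$ so that the Boccardo--Gallou\"et bootstrap closes at exactly the target exponent $W_0^{1,m^*}(\Omega)$; everything else is either standard compactness or relies on the sign of the nonlocal form. A secondary difficulty is precisely handling the fractional bilinear form against test functions that merely belong to $W_0^{1,(m^*)'}(\Omega)\cap L^{m'}(\Omega)$ and not to $\mathcal{X}^{1,2}(\Omega)$, which is why the compact support hypothesis on $v$ in item b) is essential: it decouples the integrand from the behaviour at infinity and makes the double integral meaningful without needing $v$ in the natural energy space of $\mathcal{L}$.
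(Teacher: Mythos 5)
There is a genuine gap, and it is in the step you yourself flag as crucial: the a~priori estimate. The test function $\varphi(u_k) = 1 - (1+u_k)^{-\alpha}$ is \emph{bounded by $1$} for every $\alpha>0$, so the right-hand side of your estimate is controlled purely in terms of $\|f\|_{L^1(\Omega)}$: even after writing $\int f_k\varphi(u_k)\leq\|f\|_{L^m}\|\varphi(u_k)\|_{L^{m'}}$, the second factor is $\leq|\Omega|^{1/m'}$ and injects no information about $u_k$ into the bootstrap. Running H\"older and Sobolev on the resulting bound $\alpha\int|\nabla u_k|^2(1+u_k)^{-\alpha-1}\,dx\leq C$ and imposing the closure condition $\frac{p(\alpha+1)}{2-p}=p^*$ forces
$$p=\frac{n(1-\alpha)}{\,n-1-\alpha\,}<\frac{n}{n-1}\qquad\text{for every $\alpha>0$},$$
independently of $m$. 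This is exactly the $L^1$-data regularity class; since $m^*=\frac{nm}{n-m}>\frac{n}{n-1}$ whenever $m>1$, no calibration of $\alpha$ closes the bootstrap at $m^*$. So the statement "chosen so that the bootstrap closes at $m^*$" cannot be realized with this family of test functions.

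The mechanism that actually reaches $m^*$ uses an \emph{unbounded, sublinear} test function: $v_k=(u_k+\varepsilon)^{\alpha-1}-\varepsilon^{\alpha-1}$ with $\alpha=\frac{m(n-2)}{n-2m}\in(1,2)$. Then the right-hand side is $\|f\|_{L^m}\|(u_k+\varepsilon)^{\alpha-1}\|_{L^{m'}}$, and the algebraic identities $(\alpha-1)m'=m^{**}=\frac{\alpha}{2}\,2^*$ let the Sobolev inequality fold that factor back into the left-hand side, closing first at $u_k\in L^{m^{**}}(\Omega)$ and then at $\nabla u_k\in L^{m^*}(\Omega)$. The paper does not reprove the statement you are attacking (it is cited from Arora--R\u{a}dulescu), but that is precisely the scheme followed in the proof of Theorem~\ref{thm:W1m*}, the Hardy-potential analogue, with $\gamma=0$ giving the case at hand. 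Notice also that the very family $1-(1+u_k)^{-\alpha}$ you propose (in the form $w_k=1-(1+\varphi_k)^{2\beta-1}$, $\beta<1/2$) is what the paper deploys in the proof of Theorem~\ref{thm:solvabilityL1main} to handle $f\in L^1$ and extract $W_0^{1,p}$ for $p<\frac{n}{n-1}$ --- confirming that your test function belongs to the $m=1$ regime. The remainder of your scheme (truncation, monotonicity and comparison, positivity of the nonlocal term due to $\varphi$ non-decreasing, passage to the limit against compactly supported test functions) is sound and agrees with the standard template.
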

 \begin{rem}[A comparison with the local/nonlocal case] \label{rem:compareLocalNonlocal}
 It is interesting to notice that, on account of Theorems \ref{thm:generalDirichlet}-\ref{thm:ArRaNonVar},
  the \emph{improved integrability} for the solutions of \eqref{eq:DirPb} is generated only by $-\Delta$, thus reflecting the merely perturbative role of $(-\Delta)^s$, at least when working on bounded sets. More precisely, let $u_f$ be the solution (to be specified, depending on the summability of $f$) of 
  $$\begin{cases}
   -\Delta u = f\in L^m(\Omega) & \text{in $\Omega$} \\
   u = 0 & \text{in $\de\Omega$}.
   \end{cases}.$$
\noindent Then, in \cite{Stampacchia} where more general operators are considered, Stampacchia showed that
  \begin{itemize}
\item[i)] if $1<m<(2^*)'$ then  $u_f \in W^{1,m^*}_{0}(\Omega)$;
\item[ii)] if $(2^*)'\leq m\leq n/2$ then $u_f \in H^{1}_{0}(\Omega)\cap L^{m^{**}}(\Omega)$;
\item[iii)] if $m >n/2$ then $u_f\in H^{1}_{0}(\Omega)\cap L^{\infty}(\Omega)$.
\end{itemize}
A similar results holds true in the purely nonlocal framework, that is,
for the solution $u_f$ of the following \emph{non-local Dirichlet problem}
$$\begin{cases}
   (-\Delta)^s u = f\in L^m(\Omega) & \text{in $\Omega$} \\
   u = 0 & \text{in $\R^n\setminus\Omega$}
   \end{cases}.$$
In this case, setting
$$m^{*}_{s}:= \dfrac{n m}{n-ms} \quad \textrm{and} \quad m^{**}_s:=\dfrac{nm}{n-2ms},$$
\noindent we have that (see \cite[Theorem 13, Theorem 16, Theorem 17, Theorem 24]{LPPS})
\begin{itemize}
\item[a)] if $1<m<{2n}/(n+2s)$ then the weak-duality solution $u_f \in L^{m^{**}_{s}}(\Omega)$;
\item[b)] if $2n/(n+2s)\leq m\leq n/(2s)$ then $u_f \in H^{s}_{0}(\Omega)\cap L^{m^{**}_{s}}(\Omega)$;
\item[c)] if $m >n/(2s)$ then $u\in H^{s}_{0}(\Omega)\cap L^{\infty}(\Omega)$.
\end{itemize}
 \end{rem}

\subsection{Mixed Hardy-type inequalities}\label{subsec:MixedHardy}
Now, we focus our attention on the mixed Hardy inequalities. In order to do this, we need to introduce some notations and recall some well known results. Let  $n \geq 3$. The classical Hardy inequality states that 
\begin{equation*}
 \frac{(n-2)^2}{4}\int_{\mathbb{R}^n}\dfrac{u^2}{|x|^2}\, dx \leq \int_{\mathbb{R}^n}|\nabla u|^2 \, dx, \quad u\in C^{\infty}_{0}(\mathbb{R}^n),
\end{equation*}
\noindent and it is known that this constant $\Lambda_n = (n-2)^2/4$ is never achieved, see e.g. \cite{PeralBook}. 
Clearly,
\begin{equation}\label{eq:HardyMix}
  \Lambda_n \int_{\mathbb{R}^n}\dfrac{u^2}{|x|^2}\, dx \leq \int_{\mathbb{R}^n}|\nabla u|^2 \, dx \leq \rho(u)^2, \quad u\in C^{\infty}_{0}(\mathbb{R}^n).
\end{equation}
In particular, due to the mixed nature of the norm $\rho(u)$, it holds
\begin{equation}\label{eq:HardyMixFractional}
  \overline{\Lambda}_{n,s} \int_{\mathbb{R}^n}\dfrac{u^2}{|x|^{2s}}\, dx  \leq \rho(u)^2, \quad u\in C^{\infty}_{0}(\mathbb{R}^n),
\end{equation}
\noindent as well, where $\overline{\Lambda}_{n,s}$ denotes the optimal constant for the fractional Hardy inequality, see e.g. \cite[Chapter 9]{PeralBook}. Combining \eqref{eq:HardyMix} and \eqref{eq:HardyMixFractional}, we get the following
\begin{prop}\label{prop:bestHardy}
For every $p\in [2s,2]$, there exists a positive constant $C>0$, depending only on $n$ and $s$, such that
\begin{equation}\label{eq:HardyMixFractional2}
  C \int_{\mathbb{R}^n}\dfrac{u^2}{|x|^{p}}\, dx  \leq \rho(u)^2, \quad u\in C^{\infty}_{0}(\mathbb{R}^n),
\end{equation}
\end{prop}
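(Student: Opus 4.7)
The plan is to reduce inequality \eqref{eq:HardyMixFractional2} to the two extreme cases $p=2$ (local Hardy) and $p=2s$ (fractional Hardy), both of which are already available as \eqref{eq:HardyMix} and \eqref{eq:HardyMixFractional}. The key elementary observation is that for $p\in[2s,2]$ the weight $|x|^{-p}$ sits between $|x|^{-2}$ and $|x|^{-2s}$ on complementary regions: on the unit ball one has $|x|^{-p}\le |x|^{-2}$ (since $p\le 2$ and $|x|<1$), while on its complement one has $|x|^{-p}\le |x|^{-2s}$ (since $p\ge 2s$ and $|x|\ge 1$).

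Accordingly, the first step is to split the left-hand side as
\[
\int_{\mathbb{R}^n}\frac{u^2}{|x|^p}\,dx
= \int_{B_1}\frac{u^2}{|x|^p}\,dx + \int_{\mathbb{R}^n\setminus B_1}\frac{u^2}{|x|^p}\,dx,
\]
and then to use the pointwise estimates above to bound the first piece by $\int_{\mathbb{R}^n}u^2/|x|^2\,dx$ and the second piece by $\int_{\mathbb{R}^n}u^2/|x|^{2s}\,dx$.

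The second step is to apply \eqref{eq:HardyMix} to control the first piece by $\Lambda_n^{-1}\rho(u)^2$, and \eqref{eq:HardyMixFractional} to control the second piece by $\overline{\Lambda}_{n,s}^{-1}\rho(u)^2$. Summing, one obtains \eqref{eq:HardyMixFractional2} with the explicit constant
\[
C \;=\; \Bigl(\tfrac{1}{\Lambda_n}+\tfrac{1}{\overline{\Lambda}_{n,s}}\Bigr)^{-1},
\]
which depends only on $n$ and $s$, as required.

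There is no real obstacle here: the proposition is essentially an interpolation of two already-known Hardy inequalities by a region-splitting argument, and the only mild care needed is to observe that the cut-off radius $1$ is arbitrary (any $R>0$ would yield a different but still admissible constant $C=C(n,s)$). No further analysis of the mixed operator is required beyond what has already been recorded in \eqref{eq:HardyMix}--\eqref{eq:HardyMixFractional}.
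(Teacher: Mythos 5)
Your proof is correct and follows exactly the same region-splitting argument as the paper: decompose $\mathbb{R}^n$ into the unit ball and its complement, bound $|x|^{-p}$ by $|x|^{-2}$ inside and by $|x|^{-2s}$ outside, then apply the local and fractional Hardy inequalities to each piece. The resulting constant $C=\bigl(\tfrac{1}{\Lambda_n}+\tfrac{1}{\overline{\Lambda}_{n,s}}\bigr)^{-1}$ is also the one the paper obtains.
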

\begin{proof}
 Let $p\in[2s,2]$ be arbitrarily fixed. Owing to \eqref{eq:HardyMix}-\eqref{eq:HardyMixFractional}, we have
 \begin{align*}
  \int_{\R^n}\frac{u^2}{|x|^{p}}\, dx
  & = \int_{\{|x|\leq 1\}}\frac{u^2}{|x|^{p}}\, dx
  + \int_{\{|x| > 1\}}\frac{u^2}{|x|^{p}}\, dx \\
  & \leq \int_{\{|x|\leq 1\}}\frac{u^2}{|x|^{2}}\, dx
  + \int_{\{|x| > 1\}}\frac{u^2}{|x|^{2s}}\, dx \\
  & \leq \Big(\frac{1}{\Lambda_n}+\frac{1}{\overline{\Lambda}_{n,s}}\Big)\rho(u)^2\quad
  \forall\,\,u\in C_0^\infty(\R^n),
 \end{align*}
 and this is precisely the desired \eqref{eq:HardyMixFractional2}.
\end{proof}
We notice that for $p=2$ we can take $C = \Lambda_n$, while for $p=2s$ we can take $C=\overline{\Lambda}_{n,s}$; however, due to the monotonicity of $|x|^{-p}$ it seems that the most interesting case is the one for $p=2$.
Let us focus for a moment to the case $p=2$, and let us introduce the shorthand notation
$$\mathcal{H}(u):= \int_{\mathbb{R}^n}\dfrac{u^2}{|x|^2}\, dx.$$
It is natural to wonder whether the minimization problem
\begin{equation*}
\Lambda_{s,n}:= \inf_{u \in C^{\infty}_{0}(\mathbb{R}^n)\setminus \{0\}} \dfrac{\rho(u)^2}{\mathcal{H}(u)}
\end{equation*}
\noindent admits a minimizer, that is whether $\Lambda_{s,n}$ is achieved or not. The answer is given by Theorem \ref{thm:BestConstant} which we will prove right below:

\begin{proof}[Proof of Theorem \ref{thm:BestConstant}]
Let $u \in C^{\infty}_{0}(\mathbb{R}^n)$. By the very definition of $\rho(u)$, we have that
\begin{equation*}
\Lambda_n \leq \dfrac{\|\nabla u\|^2_{L^2(\mathbb{R}^n)}}{\mathcal{H}(u)}\leq \dfrac{\rho(u)^2}{\mathcal{H}(u)},
\end{equation*}
\noindent and therefore, taking the infimum, 
\begin{equation}\label{eq:LambdanLeq}
\Lambda_n \leq \Lambda_{s,n}.
\end{equation}
On the other hand, if $u \in C^{\infty}_{0}(\mathbb{R}^n)$, we have that (for $\lambda >0$),
$$u_{\lambda}(x):= \lambda^{\tfrac{n-2}{2}}u(\lambda x) \in C^{\infty}_{0}(\mathbb{R}^n),$$
\noindent and therefore it can be used as a test function, finding
\begin{equation*}
\begin{aligned}
\Lambda_{s,n} &\leq \dfrac{\rho(u_{\lambda})^2}{\mathcal{H}(u_{\lambda})} = \dfrac{\|\nabla u\|^2_{L^{2}(\mathbb{R}^n)} + \lambda^{2s-2} \frac{C_{n,s}}{2} [u]^2_s}{\mathcal{H}(u)} \\
&=\dfrac{\|\nabla u\|^2_{L^{2}(\mathbb{R}^n)}}{\mathcal{H}(u)} + \lambda^{2s-2} \frac{C_{n,s}}{2} \dfrac{[u]^2_s}{\mathcal{H}(u)} \to \dfrac{\|\nabla u\|^2_{L^{2}(\mathbb{R}^n)}}{\mathcal{H}(u)}, \quad \textrm{as } \lambda \to +\infty.
\end{aligned}
\end{equation*}
We can then conclude that
$$\Lambda_{s,n} \leq \Lambda_n,$$
\noindent which combined with \eqref{eq:LambdanLeq} proves $\Lambda_{s,n} = \Lambda_n$. The fact that the constant is never achieved follows from the upcoming Proposition \ref{thm:LambdaneverAch}.
\end{proof}

\medskip

Let us now move to the case of bounded sets. Firstly, let us define the set
\begin{equation*}
\mathcal{M}(\Omega):= \left\{ u \in C^{\infty}_{0}(\Omega): \mathcal{H}(u)=1\right\},
\end{equation*}
and the constants
\begin{equation*}
	\Lambda_{s,n}(\Omega):= \inf_{u \in C^{\infty}_{0}(\Omega)\setminus \{0\}} \dfrac{\rho(u)^2}{\mathcal{H}(u)} =  \inf\left\{ \rho(u)^2: u\in C^{\infty}_{0}(\Omega)\cap \mathcal{M}(\Omega)\right\}.
\end{equation*}

We now prove the following proposition leading to the demonstration of Theorem \ref{thm:Main2}.

\begin{prop} \label{thm:lambdanslambdan}
Let $\Omega \subseteq \mathbb{R}^n$ be an open and bounded set such that $0 \in \Omega$. Then
\begin{equation}
\Lambda_{s,n}(\Omega) = \Lambda_n.
\end{equation}
\begin{proof}
Since $\rho(u) \geq \|\nabla u\|_{L^2(\Omega)}$ for every $u \in C^{\infty}_{0}(\Omega)$, it holds that
\begin{equation*}
\Lambda_{s,n}(\Omega) \geq \Lambda_n. 
\end{equation*}
  To prove the reverse inequality, we consider $r > 0$ be such that $B_r(0)\subseteq\Omega$.
  We now observe that, given any $u\in C_0^\infty(\R^n)\cap \mathcal{M}(\R^n)$,
  there exists $k_0 = k_0(u)\in\N$ such that 
  $$\mathrm{supp}(u)\subseteq B_{kr}(0)\quad\text{for every $k\geq k_0$};$$
  as a consequence, setting $u_k(x) := k^{\frac{n-2}{2}}u(kx)$ (for $k\geq k_0$), we readily see that
  $$\mathrm{supp}(u_k)\subseteq B_r(0)\subseteq \Omega\quad\text{and}\quad
 \mathcal{H}(u_k) = 1.$$
  Taking into account the definition of $\Lambda_{s,n}(\Omega)$, we find that, for every $k\geq k_0$,
  \begin{align*}
   \Lambda_{s,n}(\Omega) & \leq \rho(u_k)^2 = 
   \|\nabla u_k\|^2_{L^2(\R^n)}+ \frac{C_{n,s}}{2} [u_k]^2_s
= \|\nabla u\|^2_{L^2(\R^n)}+k^{2s-2} \frac{C_{n,s}}{2} [u]^2_s .
  \end{align*}
  From this, letting $k\to\infty$ (and recalling that $s < 1$), we obtain
  $$\Lambda_{s,n}(\Omega)\leq \|\nabla u\|^2_{L^2(\R^n)}.$$
  By the arbitrariness of $u\in C^\infty_0(\R^n)\cap \mathcal{M}(\R^n)$ and the fact that
   $\Lambda_n$ \emph{is independent of the open set $\Omega$}, we finally infer that
   $$\Lambda_{s,n}(\Omega)\leq \inf\big\{\|\nabla u\|^2_{L^2(\R^n)}:\, u \in C^\infty_0(\R^n)\cap \mathcal{M}(\R^n)
   \big\} = \Lambda_n,$$
   and hence $\Lambda_{s,n}(\Omega) = \Lambda_n$.
\end{proof}
\end{prop}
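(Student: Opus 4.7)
The plan is to establish equality by two inequalities, the easy one from classical Hardy and the delicate one by a scaling/concentration argument that exploits $s<1$.

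For the lower bound $\Lambda_{s,n}(\Omega)\geq \Lambda_n$, I would simply note that for every $u\in C_0^\infty(\Omega)$ we have $\rho(u)^2\geq \|\nabla u\|_{L^2(\R^n)}^2$, since the Gagliardo contribution is non-negative. Combining this with the classical Hardy inequality, which gives $\|\nabla u\|_{L^2(\R^n)}^2\geq \Lambda_n\,\mathcal{H}(u)$, yields $\rho(u)^2/\mathcal{H}(u)\geq \Lambda_n$ for every admissible $u$, and taking the infimum produces the desired bound.

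For the upper bound $\Lambda_{s,n}(\Omega)\leq \Lambda_n$, the key observation is the different scaling behaviour of the three terms in $\rho(u)^2/\mathcal{H}(u)$. Given $u\in C_0^\infty(\R^n)$ with $\mathcal{H}(u)=1$, define the rescaling $u_\lambda(x):=\lambda^{(n-2)/2}u(\lambda x)$. A direct change of variables shows $\mathcal{H}(u_\lambda)=\mathcal{H}(u)=1$ and $\|\nabla u_\lambda\|_{L^2(\R^n)}^2=\|\nabla u\|_{L^2(\R^n)}^2$, while $[u_\lambda]_s^2=\lambda^{2s-2}[u]_s^2$. Since $s<1$, the prefactor $\lambda^{2s-2}$ tends to $0$ as $\lambda\to +\infty$. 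Moreover, because $0\in\Omega$ contains some ball $B_r(0)$ and $\mathrm{supp}(u)$ is compact, $\mathrm{supp}(u_\lambda)\subseteq B_r(0)\subseteq\Omega$ for all $\lambda$ sufficiently large, so $u_\lambda$ is admissible in the definition of $\Lambda_{s,n}(\Omega)$. Testing the infimum against $u_\lambda$ and letting $\lambda\to+\infty$ gives $\Lambda_{s,n}(\Omega)\leq \|\nabla u\|_{L^2(\R^n)}^2$.

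Finally, since $\Lambda_{s,n}(\Omega)$ does not depend on the particular $u$, I would minimise over all $u\in C_0^\infty(\R^n)$ with $\mathcal{H}(u)=1$. By definition of the classical Hardy constant this infimum equals $\Lambda_n$, hence $\Lambda_{s,n}(\Omega)\leq \Lambda_n$, and together with the first inequality the equality follows. The only real subtlety I foresee is ensuring the dilations $u_\lambda$ genuinely live in $C_0^\infty(\Omega)$ for $\lambda$ large, but this is immediate from $0\in\Omega$ plus compactness of $\mathrm{supp}(u)$; the scaling computation itself is routine and the crucial point is structural, namely that $-\Delta$ and the Hardy term share the same scaling in dimension $n\geq 3$ while $(-\Delta)^s$ does not.
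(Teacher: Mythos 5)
Your proposal is correct and follows essentially the same route as the paper: the lower bound comes from discarding the nonnegative Gagliardo term and invoking the classical Hardy inequality, while the upper bound uses the rescalings $u_\lambda(x)=\lambda^{(n-2)/2}u(\lambda x)$, which preserve $\|\nabla u\|_{L^2}^2$ and $\mathcal{H}(u)$, scale $[u]_s^2$ by $\lambda^{2s-2}\to 0$, and eventually have support inside a ball $B_r(0)\subseteq\Omega$ since $0\in\Omega$. The paper parametrizes the dilations by integers $k$ rather than a continuous $\lambda$, but the argument is identical.
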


\begin{prop} \label{thm:LambdaneverAch}
Let $\Omega \subseteq \mathbb{R}^n$ be an open set (not necessarily bounded). Then, $\Lambda_{s,n}(\Omega)$ is never achieved.
\end{prop}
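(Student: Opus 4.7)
The plan is to argue by contradiction. Suppose that there exists a nontrivial function $u\in\mathcal{X}^{1,2}(\Omega)$ (the natural ambient space, obtained by density from $C_0^\infty(\Omega)$) with $\mathcal{H}(u)=1$ for which $\rho(u)^2=\Lambda_{s,n}(\Omega)=\Lambda_n$, where the last equality is by Theorem \ref{thm:Main2} (or Proposition \ref{thm:lambdanslambdan}, when $\Omega$ is bounded; when $\Omega$ is unbounded the same equality holds by the same scaling argument). If the infimum is interpreted only over $C_0^\infty(\Omega)$, then the nontriviality of the resulting contradiction will be even sharper, so I treat the completion case.

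The key step is to compare $\rho(u)^2$ with its purely local part. By the classical Hardy inequality on $\R^n$, which extends from $C_0^\infty(\R^n)$ to $\mathcal{X}^{1,2}(\Omega)$ by density (recall that $\mathcal{X}^{1,2}(\Omega)\hookrightarrow L^{2^*}(\R^n)$ and that elements of $\mathcal{X}^{1,2}(\Omega)$ live in $\R^n$ with well-defined gradient in $L^2(\R^n)$), one has
\begin{equation*}
\Lambda_n\,\mathcal{H}(u)\le \|\nabla u\|^2_{L^2(\R^n)}.
\end{equation*}
Chaining this with the obvious bound $\|\nabla u\|^2_{L^2(\R^n)}\le \rho(u)^2$ and with our standing assumption $\rho(u)^2=\Lambda_n\,\mathcal{H}(u)$, every inequality must be an equality. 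In particular,
\begin{equation*}
\tfrac{C_{n,s}}{2}\,[u]_s^2=\rho(u)^2-\|\nabla u\|^2_{L^2(\R^n)}=0,
\end{equation*}
so $[u]_s=0$. By the definition of the Gagliardo seminorm, this forces $u$ to be a.e.\ constant on $\R^n$.

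The conclusion is then immediate: by the embedding \eqref{eq:contEmbXL2star}, $u\in L^{2^*}(\R^n)$, and the only constant function in $L^{2^*}(\R^n)$ is $0$. Hence $u\equiv 0$ a.e., contradicting $\mathcal{H}(u)=1$. (If $\Omega$ is bounded, one can alternatively invoke \eqref{eq:nonlocalDirX12} to conclude that a constant $u$ vanishing outside $\Omega$ must be identically zero.)

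The main technical point, and the only one that is not essentially bookkeeping, is making sure the classical local Hardy inequality is legitimately available for $u\in\mathcal{X}^{1,2}(\Omega)$; this follows by a standard density argument using Fatou's lemma on the left-hand side and $L^2$-convergence of gradients on the right-hand side, because any $u\in\mathcal{X}^{1,2}(\Omega)$ is, by definition, the $\rho$-limit of some sequence $u_k\in C_0^\infty(\Omega)\subset C_0^\infty(\R^n)$, which in particular forces $\nabla u_k\to\nabla u$ in $L^2(\R^n)$. Everything else is a soft consequence of the strict positivity of the fractional part of $\rho$ on non-constant functions together with the global integrability built into $\mathcal{X}^{1,2}(\Omega)$.
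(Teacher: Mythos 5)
Your argument is correct and follows essentially the same route as the paper's own proof: both argue by contradiction, compare $\rho(u)^2$ with $\|\nabla u\|^2_{L^2}$ via the local Hardy inequality (which the paper invokes through $\mathcal{X}^{1,2}(\Omega)\subseteq\mathcal{D}^{1,2}(\Omega)$ and you justify by density) to force $[u]_s=0$, and then conclude $u$ is constant, which is incompatible with $\mathcal{H}(u)=1$. Your use of the embedding into $L^{2^*}(\R^n)$ to finish is a harmless variant of the paper's shorter closing remark.
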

\begin{proof}
Arguing by contradiction, let us suppose that there exists a
  function $u_0\in\mathcal{X}^{1,2}(\Omega)$
  such that 
  $\mathcal{H}(u_0) = 1$ (hence, $u_0\not\equiv 0$) and
  $$\rho(u_0)^2 = \|\nabla u_0\|^2_{L^2(\R^n)}+\frac{C_{n,s}}{2}[u_0]^2_s = \Lambda_{n}.$$
 Since $\mathcal{X}^{1,2}(\Omega)\subseteq\mathcal{D}^{1,2}(\Omega)$, we can then infer that
  $$\Lambda_n \leq \|\nabla u_0\|^2_{L^2(\Omega)} \leq \|\nabla u_0\|^2_{L^2(\R^n)}+\frac{C_{n,s}}{2}[u_0]^2_s
  = \rho(u_0)^2 = \Lambda_n,$$
  from which we derive that $[u_0]_s = 0$.
  As a consequence, the function 
  $u_0$ must be \emph{constant in $\R^n$}, but this is contradiction with the fact that
  $\mathcal{H}(u_0) = 1$.
\end{proof}

We are therefore ready to prove Theorem \ref{thm:Main2}.
\begin{proof}[Proof of Theorem \ref{thm:Main2}]
It is enough to combine Proposition \ref{thm:lambdanslambdan} with Proposition \ref{thm:LambdaneverAch}.
\end{proof}

 We end this section with the following lemma, showing
 a connection between the Hardy-type inequality 
 \eqref{eq:HardyMixFractional2} and the existence of solutions
 for the variational inequality
 $$\LL u\geq \gamma\frac{u}{|x|^2}\quad\text{in $\Omega$}.$$
 In order
 to state and prove such a result, we first fix a notation: 
 given any $k\in\mathbb{N}$, we set
 \begin{equation} \label{eq:defTk}
 	T_{k}(t):= \max \{\min\{k,t\},-k\} = \left\{ \begin{array}{lr}
 		t, & |t| \leq k,\\
 		k\dfrac{t}{|t|}, & |t| >k.
 	\end{array}\right.
\end{equation}	
 \begin{lem} \label{lem:Musina}
   Let $\Omega\subseteq\R^n$ be a bounded open set with smooth boundary, let $\gamma > 0$ be fixed,  and let $u\in\mathcal{X}^{1,2}(\Omega)$, $u>0$, be such that
   $$\mathcal{B}(u,v)\geq \gamma\int_\Omega\frac{uv}{|x|^2}\,dx\qquad\text{for all
   $v\in\mathcal{X}^{1,2}_+(\Omega)$}.$$
   Then, we have
   $$\rho(v)^2\geq \gamma\int_\Omega\frac{v^2}{|x|^2}\,dx\quad
   \text{for all $v\in\mathcal{X}^{1,2}(\Omega)$}.$$
 \end{lem}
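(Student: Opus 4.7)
The plan is a \emph{ground state substitution} in the spirit of Allegretto--Piepenbrink (local case) and Frank--Seiringer (nonlocal case). Given $\varphi\in C_0^\infty(\Omega)$ and $\varepsilon>0$, I would test the supersolution inequality against $v_\varepsilon:=\varphi^2/(u+\varepsilon)$. Admissibility as an element of $\mathcal{X}^{1,2}_+(\Omega)$ is straightforward: $v_\varepsilon$ is non-negative, bounded by $\|\varphi\|_\infty^2/\varepsilon$, and compactly supported in $\Omega$; its weak gradient, computed by the chain rule, lies in $L^2(\R^n)$ because $u\in\mathcal{X}^{1,2}(\Omega)\subset H^1(\R^n)$ and $u+\varepsilon\geq\varepsilon$, so $v_\varepsilon\in H_0^1(\Omega)\subset \mathcal{X}^{1,2}(\Omega)$.

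The heart of the argument is the pointwise bound $\mathcal{B}(u,v_\varepsilon)\leq\rho(\varphi)^2$. For the local term, expanding
$\nabla v_\varepsilon=2\varphi(u+\varepsilon)^{-1}\nabla\varphi-\varphi^2(u+\varepsilon)^{-2}\nabla u$ and applying $2|A\cdot B|\leq |A|^2+|B|^2$ with $A=\nabla\varphi$, $B=\varphi(u+\varepsilon)^{-1}\nabla u$ kills the $|\nabla u|^2$ contribution and leaves $\nabla u\cdot\nabla v_\varepsilon\leq |\nabla\varphi|^2$ a.e. For the nonlocal term, the key ingredient is the elementary algebraic identity
\[
(a-b)\Bigl(\frac{\alpha^2}{a}-\frac{\beta^2}{b}\Bigr)=(\alpha-\beta)^2-\Bigl(\alpha\sqrt{b/a}-\beta\sqrt{a/b}\Bigr)^2,\qquad a,b>0,\ \alpha,\beta\in\R,
\]
applied with $a=u(x)+\varepsilon$, $b=u(y)+\varepsilon$, $\alpha=\varphi(x)$, $\beta=\varphi(y)$, which yields the pointwise bound $(u(x)-u(y))\bigl(v_\varepsilon(x)-v_\varepsilon(y)\bigr)\leq(\varphi(x)-\varphi(y))^2$ for a.e.\ $(x,y)\in\R^{2n}$. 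Dividing by $|x-y|^{n+2s}$ and integrating (the left--hand side is a priori integrable because $u,v_\varepsilon\in\mathcal{X}^{1,2}(\Omega)$) bounds the Gagliardo contribution by $[\varphi]_s^2$. Summing the two estimates yields $\mathcal{B}(u,v_\varepsilon)\leq\rho(\varphi)^2$.

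Combining this with the supersolution assumption tested against $v_\varepsilon$, I obtain
\[
\rho(\varphi)^2\geq \mathcal{B}(u,v_\varepsilon)\geq\gamma\int_\Omega\frac{u\,\varphi^2}{(u+\varepsilon)\,|x|^2}\,dx.
\]
Since $u>0$ in $\Omega$, the integrand converges monotonically from below to $\varphi^2/|x|^2$ as $\varepsilon\downarrow 0$, and the limit is integrable by the classical Hardy inequality since $\varphi\in C_0^\infty(\Omega)$. Monotone convergence then gives $\rho(\varphi)^2\geq\gamma\,\mathcal{H}(\varphi)$ for every $\varphi\in C_0^\infty(\Omega)$. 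The conclusion for arbitrary $v\in\mathcal{X}^{1,2}(\Omega)$ follows by density in the $\rho$--norm, using the continuity of both $\rho^2$ and $\mathcal{H}$ on $\mathcal{X}^{1,2}(\Omega)$, the latter being guaranteed by Proposition \ref{prop:bestHardy} with $p=2$.

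The main obstacle I foresee is the nonlocal step: the algebraic identity is elementary once guessed, but to promote it to an integral bound one has to know a priori that the double integral of $(u(x)-u(y))(v_\varepsilon(x)-v_\varepsilon(y))|x-y|^{-n-2s}$ is absolutely convergent and genuinely equal to the Gagliardo part of $\mathcal{B}(u,v_\varepsilon)$. This is precisely where the care invested in verifying $v_\varepsilon\in\mathcal{X}^{1,2}_+(\Omega)$ pays off; once it is in place, the passage to the limit in $\varepsilon$ and the density extension are routine.
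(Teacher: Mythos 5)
Your proof is correct and reaches the same pointwise heart of the argument — the Picone-type algebraic identity that converts $(u(x)-u(y))\bigl(v_\varepsilon(x)-v_\varepsilon(y)\bigr)$ into $(\varphi(x)-\varphi(y))^2$ minus a square — but the route to making the test function admissible is genuinely different. The paper works with the raw quotient $\psi = u^{-1}\varphi^2$; to justify that $\psi\in L^\infty$ it first introduces an auxiliary truncated Dirichlet problem $\LL w = T_1(u/|x|^2)$, invokes the Weak Maximum Principle of \cite{BDVV} to get $u\geq w$ in $\R^n$, and then uses the $C^{1,\alpha}(\overline\Omega)$ regularity of $w$ (Theorem \ref{thm:generalDirichlet}\,iii)) to conclude $u\geq\vartheta>0$ on every compact subset of $\Omega$. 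You bypass all of this by replacing $u^{-1}$ with $(u+\varepsilon)^{-1}$: admissibility of $v_\varepsilon=\varphi^2/(u+\varepsilon)$ in $\mathcal{X}^{1,2}_+(\Omega)$ is then elementary, the same algebraic identity applies verbatim with $u$ shifted by $\varepsilon$, and the strict positivity $u>0$ enters only at the very last step, where $u/(u+\varepsilon)\uparrow 1$ a.e.\ and Monotone Convergence sends $\varepsilon\downarrow 0$. In exchange for one extra limit you avoid the auxiliary PDE, the maximum principle, and the boundary regularity theory altogether; your version is more self-contained and arguably cleaner. The density extension to all of $\mathcal{X}^{1,2}(\Omega)$ at the end, using continuity of $\mathcal{H}$ guaranteed by Proposition \ref{prop:bestHardy}, is exactly as in the paper.

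One small remark worth flagging for your own write-up: when you invoke the chain rule to compute $\nabla v_\varepsilon$, you should note explicitly that $t\mapsto (t+\varepsilon)^{-1}$ has bounded derivative on $[0,\infty)$ and $u\geq 0$, so that the composition is legitimate in $H^1$; you state the conclusion but not the hypothesis. This is a cosmetic point, not a gap.
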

 
 \begin{proof}
  This proof is inspired by \cite[Lem.\,B.1]{FallMus}. First of all, let us consider the following truncated problem given by
 \begin{equation}\label{eq:PT1}
 	\begin{cases}
 		\LL w = T_1 \left(\frac{u}{|x|^2}\right) & \text{in $\Omega$}\\
 		w=0 & \text{in $\R^n \setminus  \Omega$}.
 	\end{cases}
  \end{equation}
  We know that problem \eqref{eq:PT1} admits a unique positive weak solution $w \in \mathcal{X}^{1,2}(\Omega)$. Hence, noticing that 
  $$\LL u\geq \gamma\frac{u}{|x|^2} \geq \gamma T_1 \left(\frac{u}{|x|^2}\right) = \LL w \quad\text{in $\Omega$},$$
 and since the operator $\LL$ is linear, we easily deduce that
  \begin{equation*}
  	\begin{cases}
  		\LL (w-u) \leq 0 & \text{in $\Omega$}\\
  		w-u=0 & \text{in $\R^n \setminus  \Omega$}.
  	\end{cases}
\end{equation*}
  As a consequence, since $v = w-u\in\mathcal{X}^{1,2}(\Omega)\subseteq H^1(\R^n)$,
  we are entitled to apply the Weak Maximum Principle
  in \cite[Theorem\,1.2]{BDVV}, obtaining
  $$u \geq w \qquad \text{in } \R^n.$$
   In particular, since $w \in C^{1,\alpha}(\overline{\Omega})$ (see Theorem \ref{thm:generalDirichlet}), we deduce that for every compact set $\mathcal K \subset \Omega$
   there exists a constant $\vartheta >0$ such that
  \begin{equation} \label{eq:ugeqwWMP}
   u \geq w \geq \vartheta>0 \qquad \text{in } \mathcal{K},
   \end{equation}
  and this ensures that $u^{-1}\in L^\infty_{\mathrm{loc}}(\Omega)$. 
  
  Now we have established \eqref{eq:ugeqwWMP}, we can easily complete
  the proof of the lemma. Indeed, let
  $\varphi\in C_0^\infty(\Omega)$  be arbitrarily fixed, and let
  $$\psi = u^{-1}\varphi^2.$$ 
  By \eqref{eq:ugeqwWMP} (and since  supp$(\varphi) \subset \Omega$), we have $\psi\in L^\infty(\Omega)$; moreover,
  $$(u(x)-u(y))(\psi(x)-\psi(y)) \leq |\varphi(x)-\varphi(y)|^2.$$
  Indeed, by the very definition of $\psi$ we have
  \begin{equation}\label{eq:conto_su_psi}
  	\begin{split}
  	(u(x)-u(y))&(\psi(x)-\psi(y)) 	= \varphi(x)^2+\varphi(y)^2-u(x)\frac{\varphi^2(y)}{u(y)}-
  	u(y)\frac{\varphi^2(x)}{u(x)} \\
  	& = (\varphi(x)-\varphi(y))^2
  	+2\varphi(x)\varphi(y)-u(x)\frac{\varphi^2(y)}{u(y)}-
  	u(y)\frac{\varphi^2(x)}{u(x)} \\
  	& = (\varphi(x)-\varphi(y))^2
  	+2\varphi(x)\varphi(y)-u(x)u(y)\frac{\varphi^2(y)}{u^2(y)}-
  	u(x)u(y)\frac{\varphi^2(x)}{u^2(x)} \\
	& = (\varphi(x)-\varphi(y))^2-
  	u(x)u(y)\Big\{\frac{\varphi^2(x)}{u^2(x)}+\frac{\varphi^2(y)}{u^2(y)}
  	- 2\frac{\varphi(x)}{u(x)}\frac{\varphi(y)}{u(y)}\Big\} \\
  	& = (\varphi(x)-\varphi(y))^2-u(x)u(y) \Big(\frac{\varphi(x)}{u(x)} -\frac{\varphi(y)}{u(y)}\Big)^2  	\\
  	&\leq (\varphi(x)-\varphi(y))^2,
  \end{split}
  \end{equation}
  where we have also used the fact that $u > 0$ a.e.\,in $\Omega$.
  
  From now on we argue as in \cite{FallMus}. In order to prove the result we take $\psi$ as test function in
  $$\int_{\R^n} \nabla u \cdot \nabla \psi \, dx + \frac{C_{n,s}}{2}\iint_{\R^{2n}} \frac{(u(x)-u(y))(\psi(x)-\psi(y))}{|x-y|^{n+2s}} \,dx dy\geq \gamma\int_\Omega\frac{u \psi}{|x|^2}\,dx.$$
  Noticing that $\nabla u \cdot \nabla \psi
  = |\nabla\varphi|^2-u^2|\nabla(u^{-1}\varphi)|^2\leq |\nabla\varphi|^2$, and using \eqref{eq:conto_su_psi}, we deduce
  $$\int_{\R^n} |\nabla \varphi|^2 \, dx + \frac{C_{n,s}}{2} \iint_{\R^{2n}} \frac{(\varphi(x)-\varphi(y))^2}{|x-y|^{n+2s}} \,dx dy\geq \gamma\int_\Omega\frac{\varphi^2}{|x|^2}\,dx.$$
 \end{proof}

\section{The $\LL$-Dirichlet problem involving the Hardy potential}\label{sec:PDE}

Taking into account all the results recalled and/or
established so far, we now aim to study \emph{existence and improved-integrability}
of the solutions for the $\LL$-Dirichlet problem \eqref{eq:DirichletProblem} defined in the Introduction, that is
\begin{equation}\label{eq:HardyEigenvalue}{\tag {$\mathrm{D}_{\gamma,f}$}}
\left\{ \begin{array}{rl}
\mathcal{L}u - \gamma \dfrac{u}{|x|^2} = f & \textrm{in } \Omega,\\
u= 0 & \textrm{in } \mathbb{R}^n \setminus \Omega,
\end{array}\right.
\end{equation}
\noindent where $\Omega \subset \mathbb{R}^n$, $n \geq 3$, is an open, regular and  bounded set with $0 \in \Omega$, and
$$\text{$f \in L^m(\Omega)$ for some $m\geq 1$}\quad\text{and}\quad
\gamma \in \left(0, \Lambda_n\right).$$
To this end, motivated by the discussion in Section \ref{sec.Prel},
we distinguish the following three cases, according to the values of $m$.
\begin{align*}
\mathrm{i)}&\,\,\text{$f\in L^m(\Omega)$ for some $m\geq (2^*)' = \frac{2n}{n+2}$}; \\[0.1cm]
\mathrm{ii)}&\,\,\text{$f\in L^m(\Omega)$ for some $1<m<(2^*)'$}; \\[0.1cm] 
\mathrm{iii)}&\,\,f\in L^1(\Omega).
\end{align*}
\begin{rem} \label{rem:restriction}
 We explicitly stress that the restriction $0<\gamma<\Lambda_n$, generally, is motivated by
 the re\-sults in the previous section. Indeed, if we assume that there exists
 a \emph{positive weak solution} of problem \eqref{eq:HardyEigenvalue}
 for some $\lambda > 0$ and some $f\geq 0$, say $u_f\in\mathcal{X}^{1,2}_+(\Omega)$, then
 $$\mathcal{B}(u_f,v) = \gamma\int_{\Omega}\frac{u_fv}{|x|^2}\,dx
 +\int_\Omega fv\,dx\geq \gamma\int_{\Omega}\frac{u_fv}{|x|^2}\,dx
 \quad\text{for all $v\in\mathcal{X}^{1,2}_+(\Omega)$};$$
 this, together with Lemma \ref{lem:Musina} and Theorem \ref{thm:BestConstant}, implies that
 $$\Lambda_n
 = \inf_{u \in C^{\infty}_{0}(\Omega)\setminus \{0\}} \dfrac{\rho(u)^2}{\mathcal{H}(u)}
 \geq \gamma.$$
Hence, the last fact motivates nonexistence of positive energy solution of problem \eqref{eq:HardyEigenvalue} in the case $\gamma > \Lambda_{n}$ and $f$ sufficiently regular.

On the other hand, if we consider the case $\gamma = \Lambda_n$ in   \eqref{eq:HardyEigenvalue}, there is a different approach that can be borrowed, as pointed out in \cite{PeralBook}. Thanks to a more general version of the Hardy inequality, the improved Hardy inequality (see, e.g., \cite{VazZua}), one can define an Hilbert 
space $\mathcal{H}(\Omega)$ larger than $\mathcal{X}^{1,2}(\Omega)$ where it is possible to find a unique positive solution $u_f \in \mathcal{H}(\Omega)$ to \eqref{eq:HardyEigenvalue} if $f$ belongs to the dual space $\mathcal{H}'(\Omega)$ (e.g. when $f \in L^m(\Omega)$ with $m>2n/(n+2)$). 
\end{rem}

\subsection{Case i)\,-\,$f\in L^m(\Omega)$ with $m\geq (2^*)'$.}
As already pointed out in Section \ref{sec.Prel}, in this case we have $f\in\mathbb{X} =
(\mathcal{X}^{1,2}(\Omega))'$; as a consequence,
we can investigate existence and uniqueness of solutions
for \eqref{eq:DirPb} using the Lax-Milgram Theorem, obtaining the following result.
\begin{thm} \label{thm:LaxMilagrmVariational}
Let $f\in L^m(\Omega)$ with $m\geq (2^*)'$. Then, 
problem \eqref{eq:HardyEigenvalue} admits a unique weak solution $u_f\in\mathcal{X}^{1,2}(\Omega)$, 
in the following sense: 
\begin{equation} \label{eq:weakform}
\mathcal{B}(u,v) - \gamma \int_{\mathbb{R}^n}\dfrac{uv}{|x|^2} \, dx = \int_{\Omega}fv \, dx, \quad \textrm{for every } v \in \mathcal{X}^{1,2}(\Omega).
\end{equation}
In particular, if $f \equiv 0$, the unique weak solution is the trivial one; while, if $f\geq 0$ a.e.\,in $\Omega$, then $u_f \geq 0$ a.e.\,in $\Omega$. Indeed, $u_f>0$ a.e.\,in $\Omega$.
\end{thm}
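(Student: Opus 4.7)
The plan is to apply the Lax--Milgram Theorem to the bilinear form
$$a_\gamma(u,v) := \mathcal{B}(u,v) - \gamma\int_{\R^n}\frac{uv}{|x|^2}\,dx$$
on the Hilbert space $\mathcal{X}^{1,2}(\Omega)$. First I would check that $a_\gamma$ is well-defined and continuous: the bilinear form $\mathcal{B}$ is the scalar product inducing the norm $\rho(\cdot)$, while the weighted term is controlled by Cauchy--Schwarz and the Hardy inequality in Theorem \ref{thm:BestConstant} (combined with the density of $C_0^\infty(\Omega)$ in $\mathcal{X}^{1,2}(\Omega)$), giving
$$\Big|\gamma\int\frac{uv}{|x|^2}\,dx\Big|\leq \gamma\,\mathcal{H}(u)^{1/2}\mathcal{H}(v)^{1/2}\leq \frac{\gamma}{\Lambda_n}\rho(u)\rho(v).$$
The key point is \emph{coercivity}: using again $\mathcal{H}(u)\leq \Lambda_n^{-1}\rho(u)^2$ from Theorem \ref{thm:BestConstant}, one has
$$a_\gamma(u,u) = \rho(u)^2-\gamma\,\mathcal{H}(u)\geq \Big(1-\frac{\gamma}{\Lambda_n}\Big)\rho(u)^2,$$
which is strictly positive precisely because $\gamma\in(0,\Lambda_n)$. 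For the right-hand side $v\mapsto \int_\Omega fv\,dx$, its continuity on $\mathcal{X}^{1,2}(\Omega)$ follows from $f\in L^m(\Omega)$ with $m\geq(2^*)'$, H\"older's inequality, and the embedding \eqref{eq:contEmbXL2star}. Lax--Milgram then produces a unique $u_f\in\mathcal{X}^{1,2}(\Omega)$ satisfying \eqref{eq:weakform}, and when $f\equiv 0$ uniqueness forces $u_f\equiv 0$.

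For the sign of $u_f$ when $f\geq 0$, I would test \eqref{eq:weakform} with $v=u_f^-\in\mathcal{X}^{1,2}_+(\Omega)$ (admissibility via standard truncation in $\mathcal{X}^{1,2}$). On the local side $\int\nabla u_f\cdot\nabla u_f^-\,dx=-\|\nabla u_f^-\|_{L^2}^2$; on the nonlocal side the elementary inequality
$$(u_f(x)-u_f(y))(u_f^-(x)-u_f^-(y))\leq -(u_f^-(x)-u_f^-(y))^2$$
(obtained by writing $u_f=u_f^+-u_f^-$ and noting $(u_f^+(x)-u_f^+(y))(u_f^-(x)-u_f^-(y))\leq 0$) yields $\mathcal{B}(u_f,u_f^-)\leq -\rho(u_f^-)^2$. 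Since $u_f u_f^-=-(u_f^-)^2$ and $\int fu_f^-\,dx\geq 0$, equation \eqref{eq:weakform} gives
$$-\rho(u_f^-)^2+\gamma\,\mathcal{H}(u_f^-)\geq \mathcal{B}(u_f,u_f^-)+\gamma\,\mathcal{H}(u_f^-)=\int_\Omega f u_f^-\,dx\geq 0.$$
Combining this with $\rho(u_f^-)^2\geq \Lambda_n\mathcal{H}(u_f^-)$ and $\gamma<\Lambda_n$ forces $\mathcal{H}(u_f^-)=0$, and hence $u_f^-\equiv 0$, i.e.\ $u_f\geq 0$ a.e.\ in $\Omega$.

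For the strict positivity statement, once $u_f\geq 0$ is known, I would rewrite the equation as $\mathcal{L}u_f = \tilde f$ in the weak sense, where $\tilde f := f+\gamma\,u_f/|x|^2\geq 0$ and (by Hardy) $\tilde f\in\mathbb{X}$, so that $u_f$ is the unique $\mathcal{X}^{1,2}$-solution of the pure $\LL$-Dirichlet problem with datum $\tilde f$. Invoking item v) of Theorem \ref{thm:generalDirichlet} (the alternative $u_f\equiv 0$ or $u_f>0$) and observing that $\tilde f\gneq 0$ whenever $f\gneq 0$ rules out the trivial alternative, one concludes $u_f>0$ a.e.\ in $\Omega$.

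The main obstacle I anticipate is not the Lax--Milgram step itself, which is smooth once coercivity is established, but rather two technical points: (a) justifying that $u_f^-\in\mathcal{X}^{1,2}(\Omega)$ is an admissible test function with the correct pointwise/integral identities on both the local and the nonlocal parts of $\mathcal{B}$, which requires the standard but non-trivial truncation-plus-density approximation valid in the mixed setting; and (b) legitimizing the reinterpretation $\mathcal{L}u_f=\tilde f$ as a $\mathcal{X}^{1,2}$-solution of a pure $\LL$-Dirichlet problem, so that the strong maximum principle from Theorem \ref{thm:generalDirichlet}(v) can be applied verbatim.
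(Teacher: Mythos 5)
Your proposal is correct and follows essentially the same route as the paper: Lax--Milgram on $a_\gamma(u,v)=\mathcal{B}(u,v)-\gamma\int u v/|x|^2$, with coercivity via the mixed Hardy inequality ($\mathcal{H}(u)\leq\Lambda_n^{-1}\rho(u)^2$) and $\gamma<\Lambda_n$; nonnegativity by testing with $u_f^-$ and using the same algebraic inequality for the nonlocal term; strict positivity by rewriting the equation as an $\LL$-problem with nonnegative right-hand side and invoking the strong maximum principle (the paper phrases this via \cite[Corollary\,3.3]{BMV} with $g(x,t)=f(x)+\gamma t/|x|^2$, which is the same idea as your reformulation $\LL u_f=\tilde f$).
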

\begin{proof}
The first assertion follows from Lax-Milgram Theorem: in fact, it suffices to notice that 
the bilinear form defined as
$$\mathcal{B}(u,v) - \gamma \int_{\mathbb{R}^n}\dfrac{uv}{|x|^2} \, dx$$
\noindent is coercive and continuous on the Hilbert space $\mathcal{X}^{1,2}(\Omega)$. 
The second assertion easily follows noticing that $u \equiv 0$ is a weak solution to \eqref{eq:HardyEigenvalue} when $f \equiv 0$. 

Finally, we turn to prove $u_f \geq 0$ a.e.\,in $\Omega$ when $f \geq 0$. To this end we first observe that,
since $u_f\in\mathcal{X}^{1,2}(\Omega)$, we have 
$$v := (u_f)^- = \max\{-u_f,0\}\in\mathcal{X}^{1,2}(\Omega).$$ 
We are then entitled to use
this  function $v$ as a test function in \eqref{eq:weakform}, obtaining
\begin{align*}
 0 & \leq \int_\Omega fv\,dx = \mathcal{B}(u_f,v) - \gamma \int_{\mathbb{R}^n}\dfrac{u_fv}{|x|^2} \, dx \\
 & = \int_{\R^n}\nabla u_f\cdot\nabla v\,dx
 +\frac{C_{n,s}}{2} \iint_{\R^{2n}}\frac{(u_f(x)-u_f(y))(v(x)-v(y))}{|x-y|^{n+2s}}\,dx\,dy
 - \gamma \int_{\mathbb{R}^n}\dfrac{u_fv}{|x|^2} \, dx \\
 & = -\Big(\int_\Omega |\nabla v|^2\,dx-\gamma\int_\Omega\frac{v^2}{|x|^2}\,dx\Big) 
 + \frac{C_{n,s}}{2} \iint_{\R^{2n}}\frac{(u_f(x)-u_f(y))(v(x)-v(y))}{|x-y|^{n+2s}}\,dx\,dy \\
  & (\text{since $(u_f(x)-u_f(y))(v(x)-v(y))\leq -|v(x)-v(y)|^2$}) \\
 & \leq -\Big(\rho(v)^2-\gamma\int_\Omega\frac{v^2}{|x|^2}\,dx\Big) \\
 & (\text{using Theorem \ref{thm:BestConstant}}) \\
 & \leq -\left(1-\frac{\gamma}{\Lambda_n}\right)\rho(v)^2
\end{align*}
From this, since $0<\gamma<\Lambda_n$, we conclude that $v = (u_f)^- = 0$ a.e.\,in $\Omega$, 
and thus
\begin{equation}\label{eq:u_f_geq_0}
u_f\geq 0 \quad \textrm{ a.e.\,in }\Omega.
\end{equation}
To show that $u_f > 0$ a.e.\,in $\Omega$ we now observe that, since $u_f$ is a weak solution of \eqref{eq:HardyEigenvalue}, we can write
$$\mathcal{B}(u_f,v) = \int_{\Omega}g(x,u_f)v\,dx\quad\forall\,\,v\in\mathcal{X}^{1,2}(\Omega),$$
where $g(x,t) = f(x)+\lambda\,t/|x|^2$; thus, since $f > 0$ a.e.\,$\Omega$, we have
$$\text{$g(x,t)\geq 0$ for a.e.\,$x\in\Omega$ and $t\geq 0$}.$$ 
Recalling \eqref{eq:u_f_geq_0}, we can apply
the Strong Maximum Principle arguing as in the proof of \cite[Corollary\,3.3]{BMV}, showing that $u_f > 0$ a.e.\,in $\Omega$
(as $f\not\equiv 0$).
\end{proof}
 Similarly to the case $\gamma = 0$ discussed in Section \ref{sec.Prel},
 see Theorem \ref{thm:generalDirichlet}\,-\,ii),
 also for the unique solution $u_f$  of problem \eqref{eq:HardyEigenvalue}
 we have an \emph{improved-integrability result}. 
\begin{thm}\label{thm:m**}
Let $f \in L^{m}(\Omega)$  with
$\tfrac{2n}{n+2}\leq m < \tfrac{n}{2},$ be a positive function.
If
$$0< \gamma < \gamma(m):= \dfrac{n(m-1)(n-2m)}{m^2},$$
\noindent then the unique weak solution $u_f\in\mathcal{X}^{1,2}(\Omega)$ of
problem \eqref{eq:HardyEigenvalue} 
\emph{(}whose existence is guaranteed by Theorem \ref{thm:LaxMilagrmVariational}\emph{)} is such that
$$u_f \in L^{m^{**}}(\Omega),\quad \textrm{where } m^{**}:= \tfrac{n m}{n-2m}$$
 \end{thm}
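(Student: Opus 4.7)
The plan is to carry out a Stampacchia-type estimate, testing the weak formulation against a suitable power of $u_f$ chosen so that Sobolev embedding on the left and H\"older's inequality on the right both produce the target norm $\|u_f\|_{L^{m^{**}}}$. Set
$$\alpha:=\frac{m(n-2)}{2(n-2m)},$$
so that $2^{*}\alpha=m^{**}=(2\alpha-1)m'$; a direct algebraic check also yields the key identity
$$\frac{2\alpha-1}{\alpha^{2}}=\frac{4n(m-1)(n-2m)}{m^{2}(n-2)^{2}}=\frac{\gamma(m)}{\Lambda_{n}},$$
which explains the threshold $\gamma<\gamma(m)$.

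Formally, I would take $v=u_{f}^{2\alpha-1}$ in \eqref{eq:weakform}. The chain rule gives $\nabla u_{f}\cdot\nabla v=\frac{2\alpha-1}{\alpha^{2}}|\nabla u_{f}^{\alpha}|^{2}$ and the pointwise algebraic inequality
$$(a-b)(a^{2\alpha-1}-b^{2\alpha-1})\geq\frac{2\alpha-1}{\alpha^{2}}(a^{\alpha}-b^{\alpha})^{2},\qquad a,b\geq 0,$$
controls the nonlocal integrand from below. Together they produce the lower bound $\frac{2\alpha-1}{\alpha^{2}}\rho(u_{f}^{\alpha})^{2}$ for the left-hand side. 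Applying the local Hardy inequality of Theorem~\ref{thm:BestConstant} to $u_{f}^{\alpha}\in\mathcal{X}^{1,2}(\Omega)$ absorbs the Hardy term, leaving
$$\Big(\frac{2\alpha-1}{\alpha^{2}}-\frac{\gamma}{\Lambda_{n}}\Big)\rho(u_{f}^{\alpha})^{2}\leq\int_{\Omega}f\,u_{f}^{2\alpha-1}\,dx,$$
whose coefficient is strictly positive precisely when $\gamma<\gamma(m)$. The mixed Sobolev inequality \eqref{eq:Sobolevmista} applied to $u_{f}^{\alpha}$ yields $\rho(u_{f}^{\alpha})^{2}\geq\mathcal{S}_{n}\|u_{f}\|_{L^{m^{**}}}^{2\alpha}$, while H\"older on the right with exponents $(m,m')$ produces $\|f\|_{L^{m}}\|u_{f}\|_{L^{m^{**}}}^{2\alpha-1}$ (both identifications relying on the choice of $\alpha$); dividing gives the desired bound $\|u_{f}\|_{L^{m^{**}}(\Omega)}\leq C(n,s,m,\gamma)\|f\|_{L^{m}(\Omega)}$.

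The main obstacle is that $u_{f}^{2\alpha-1}$ is not admissible as a test function in $\mathcal{X}^{1,2}(\Omega)$, and the finiteness of $\|u_{f}\|_{L^{m^{**}}}$ needed to divide through has to be established first. I would address this by an approximation argument on both the source and the singular coefficient, considering the regularized problems
$$\LL u_{n}-\gamma\frac{u_{n}}{|x|^{2}+1/n}=T_{n}(f)\ \text{in }\Omega,\qquad u_{n}\equiv 0\ \text{in }\R^{n}\setminus\Omega.$$
The associated bilinear form is coercive on $\mathcal{X}^{1,2}(\Omega)$ (since $\gamma/(|x|^{2}+1/n)\leq\gamma/|x|^{2}$ and Theorem~\ref{thm:BestConstant} applies with $\gamma<\Lambda_{n}$), and the bounded sources $T_{n}(f)\in L^{\infty}$ together with the bounded potential allow a standard Moser-type bootstrap (or an iterative application of Theorem~\ref{thm:generalDirichlet}\,-\,i)) to yield $u_{n}\in L^{\infty}(\Omega)$. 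For such $u_{n}$ the test function $u_{n}^{2\alpha-1}$ is admissible, so the formal chain of inequalities above applies verbatim and, using $\gamma/(|x|^{2}+1/n)\leq\gamma/|x|^{2}$ in the Hardy step, gives a \emph{uniform-in-$n$} estimate $\|u_{n}\|_{L^{m^{**}}}\leq C\|f\|_{L^{m}}$. A standard energy argument combined with uniqueness for \eqref{eq:HardyEigenvalue} yields $u_{n}\rightharpoonup u_{f}$ weakly in $\mathcal{X}^{1,2}(\Omega)$, and Fatou's lemma transfers the $L^{m^{**}}$-bound to $u_{f}$.
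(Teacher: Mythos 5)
Your proof is correct and follows the same core strategy as the paper's: test the equation against $u^{2\alpha-1}$ (the paper writes $\alpha_{\rm paper}=2\alpha_{\rm yours}=\tfrac{m(n-2)}{n-2m}$ and tests with $\varphi_k^{\alpha_{\rm paper}-1}$, which is the same function), bound the nonlocal term from below via the elementary pointwise inequality so that the left side dominates $\tfrac{4(\alpha_{\rm paper}-1)}{\alpha_{\rm paper}^2}\rho(u^{\alpha_{\rm paper}/2})^2$, absorb the Hardy term using the sharp constant $\Lambda_n$, and close with Sobolev on the left and H\"older on the right; the threshold $\gamma(m)$ arises from the same identity $\tfrac{4(\alpha_{\rm paper}-1)}{\alpha_{\rm paper}^2}=\gamma(m)/\Lambda_n$.

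The one structural difference is the regularization used to make the formal computation rigorous. You truncate the singular coefficient itself, so each regularized problem is a genuinely coercive bilinear form solvable in one shot by Lax--Milgram, and you then obtain $u_n\in L^\infty$ by a Moser--Stampacchia bootstrap before testing with $u_n^{2\alpha-1}$. The paper instead uses the Picard-type iteration $\mathcal{L}\varphi_k=\gamma\,\varphi_{k-1}/(|x|^2+1/k)+T_k(f)$, $\varphi_0\equiv 0$: each step is a plain $\mathcal{L}$-Dirichlet problem with bounded, nonnegative right-hand side, so $\varphi_k\in\mathcal{X}^{1,2}_+\cap L^\infty$ comes directly from Theorem~\ref{thm:generalDirichlet}, the sequence is monotone by the maximum principle (which is how the paper bounds $\int\varphi_{k-1}\varphi_k^{\alpha_{\rm paper}-1}/(|x|^2+1/k)\le\int\varphi_k^{\alpha_{\rm paper}}/|x|^2$), and the convergence $\varphi_k\to u_f$ is established once in Lemma~\ref{lem:convergenceApprox} and reused in Theorems~\ref{thm:W1m*} and \ref{thm:solvabilityL1main}. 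Both schemes are valid; yours avoids the monotonicity argument and the separate convergence lemma at the cost of the $L^\infty$ bootstrap (and you should record explicitly that $u_n\geq 0$, needed for the algebraic inequality, which follows from the maximum principle as in Theorem~\ref{thm:LaxMilagrmVariational}), while the paper's monotone iteration gives boundedness of each iterate for free and factors the limiting argument out into the appendix so it can serve all three regularity theorems.
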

\begin{proof}
As it is customary for such kind of problems, for every $k\in\mathbb{N}$ we consider the unique weak 
solution $\varphi_k \in \mathcal{X}^{1,2}(\Omega) \cap L^{\infty}(\mathbb{R}^n)$ of the regularized problem
\begin{equation}\label{eq:TruncatedProblem}
\left\{\begin{array}{rl}
\mathcal{L}u = \gamma \dfrac{\varphi_{k-1}}{|x|^2 + 1/k} + f_k(x) & \textrm{in } \Omega,\\
u = 0 &\textrm{in } \mathbb{R}^{n}\setminus \Omega,
\end{array}\right.
\end{equation}
\noindent where $\varphi_{0}:=0$ and $f_k(x):= T_{k}(f(x))$. 
Owing to Lemma \ref{lem:convergenceApprox}, we know that
\begin{itemize}
\item[i)] $\varphi_k$ is well-defined for every $k\geq 1$, and $\{\varphi_k\}_k$ is non-decreasing;
\vspace*{0.1cm}

\item[ii)] $\varphi_k \to u_f$ as $k\to+\infty$ strongly in $L^{p}(\Omega)$ for every $p \in [1,2^{*})$, where 
$u_f\in\mathcal{X}^{1,2}(\Omega)$ is the unique weak solution of problem \eqref{eq:HardyEigenvalue}
(in the sense of Theorem \ref{thm:LaxMilagrmVariational});
\end{itemize}
 Let now 
\begin{equation}\label{eq:DefAlpha}
\alpha := \dfrac{m(n-2)}{n-2m},
\end{equation}
\noindent and notice the following:
\begin{itemize}
\item[a)] $m \geq \tfrac{2n}{n+2}$ is equivalent to $\alpha \geq 2$;
\item[b)] 
denoting by $m' = \tfrac{m}{m-1}$ the usual H\"{o}lder conjugate exponent of $m$, one has
$$\frac{2^{*}\alpha}{2} = \frac{n m}{n-2m} = m^{**}=(\alpha -1)m'.$$ 
\end{itemize}
Since $\varphi_{k}\in \mathcal{X}^{1,2}_{+}(\Omega)$ for every $k\in\mathbb{N}$, we can use $v_k := \varphi_{k}^{\alpha -1} 
(= |\varphi_k|^{\alpha -2}\varphi_k),$ as test function in the variational formulation of \eqref{eq:TruncatedProblem}, finding
\begin{equation}\label{eq:Testata}
\begin{aligned}
\mathcal{B}\left(\varphi_k, \varphi_k^{\alpha-1}\right) &= \gamma 
\int_{\Omega}\dfrac{\varphi_{k-1}\varphi_{k}^{\alpha-1}}{(|x|^2 + 1/k)} \, dx + 
\int_{\Omega}f_k \varphi_{k}^{\alpha-1}\, dx \\
&\leq \gamma \int_{\Omega}\dfrac{\varphi_{k}^{\alpha}}{|x|^2} \, dx+ \int_{\Omega}f_k \varphi_{k}^{\alpha-1}\, dx,
\end{aligned}
\end{equation}
\noindent where we used the increasing monotonicity of $\{\varphi_k\}_k$.

Let us now consider the left hand side of \eqref{eq:Testata}. We have
\begin{equation*}
\begin{aligned}
& \mathcal{B}\left(\varphi_k, \varphi_k^{\alpha-1}\right) \\
& \qquad = \int_{\Omega}\nabla \varphi_{k} \cdot \nabla (\varphi_{k}^{\alpha-1}) \, dx + 
\frac{C_{n,s}}{2} \iint_{\mathbb{R}^{2n}}
\dfrac{(\varphi_{k}(x)-\varphi_{k}(y))(\varphi_{k}^{\alpha-1}(x)-\varphi_{k}^{\alpha-1}(y))}{|x-y|^{n+2s}}\, dx\,dy\\
&\qquad =: (L) + (NL).
\end{aligned}
\end{equation*}
The nonlocal part can be treated as in the proof of \cite[Theorem 4.2]{AMPP}. Let us give the details for sake of completeness: first, exploiting the algebraic inequality \cite[Equation (17)]{AMPP}, i.e. for $s_1,s_2 \geq 0$ and $a>0$ it holds
$$(s_1-s_2)(s_1^a-s_2^a) \geq \frac{4a}{(a+1)^2} \left(s_1^\frac{a+1}{2} - s_2^\frac{a+1}{2}\right)^2,$$
we deduce that
\begin{equation*}
(\varphi_{k}(x)-\varphi_{k}(y))(\varphi_{k}^{\alpha-1}(x)-\varphi_{k}^{\alpha-1}(y)) \geq 
\dfrac{4(\alpha-1)}{\alpha^2}\left( \varphi_{k}^{\alpha/2}(x)-\varphi_{k}^{\alpha/2}(y)\right)^2,
\end{equation*}
\noindent and therefore
\begin{equation}\label{eq:NL}
(NL) \geq \dfrac{4(\alpha-1)}{\alpha^2} \left[ \varphi_{k}^{\alpha/2}\right]_s^2.
\end{equation}
In the local part we have,
\begin{equation*}
\nabla \varphi_{k} \cdot \nabla (\varphi_{k}^{\alpha-1}) = \dfrac{4(\alpha-1)}{\alpha^2} 
\left| \nabla \varphi_{k}^{\alpha/2}\right|^2,
\end{equation*}
\noindent and therefore
\begin{equation}\label{eq:L}
(L) = \dfrac{4(\alpha-1)}{\alpha^2} \int_{\Omega}\left| \nabla \varphi_{k}^{\alpha/2}\right|^2\, dx.
\end{equation}
\noindent Hence, combining \eqref{eq:L} and \eqref{eq:NL} we get
\begin{equation}\label{eq:Stima_B}
\mathcal{B}\left(\varphi_k, \varphi_k^{\alpha-1}\right) \geq 
\dfrac{4(\alpha-1)}{\alpha^2} \mathcal{B}\left(\varphi_{k}^{\alpha/2}, \varphi_{k}^{\alpha/2}\right) 
= \dfrac{4(\alpha-1)}{\alpha^2} \rho\left(\varphi_{k}^{\alpha/2}\right)^2.
\end{equation}
We now move to the right hand side of \eqref{eq:Testata}. By H\"{o}lder inequality, we find that
\begin{equation}\label{eq:Stima_f_k}
\int_{\Omega}f_k \varphi_{k}^{\alpha-1}\, dx \leq \|f_k\|_{L^{m}(\Omega)} \left( \int_{\Omega} \varphi_{k}^{(\alpha-1)m'}\, dx \right)^{1/m'},
\end{equation}
\noindent where $m'$ is the H\"{o}lder conjugate exponent of $m$ defined in b). It remains to deal with the last term: by the mixed Hardy inequality  with best constant, see Theorem \ref{thm:BestConstant}, we get
\begin{equation}\label{eq:Stima_Hardy_term}
\gamma \int_{\Omega}\dfrac{\varphi_{k}^{\alpha}}{|x|^2} \, dx  = 
\gamma \int_{\Omega} \dfrac{\left(\varphi_{k}^{\alpha/2}\right)^2}{|x|^2}\, dx 
\leq \dfrac{\gamma}{\Lambda_n} \rho\left(\varphi_{k}^{\alpha/2}\right)^2.
\end{equation}
Combining \eqref{eq:Testata} with \eqref{eq:Stima_B}, \eqref{eq:Stima_f_k} 
and \eqref{eq:Stima_Hardy_term}, we get
\begin{equation}\label{eq:PreSobolev}
\left(\dfrac{4(\alpha-1)}{\alpha^2} - \dfrac{\gamma}{\Lambda_n} \right) 
\rho\left(\varphi_{k}^{\alpha/2}\right)^2 \leq \|f_k\|_{L^{m}(\Omega)} \left( \int_{\Omega} 
\varphi_{k}^{(\alpha-1)m'}\, dx \right)^{1/m'}.
\end{equation}
As in the purely local case, in order to have a meaningful a priori estimate we need to have
\begin{equation*}
\gamma <\dfrac{4(\alpha-1)\Lambda_n}{\alpha^2} = \dfrac{n(m-1)(n-2m)}{m^2}= \gamma(m),
\end{equation*}
\noindent as assumed.
In order to conclude the proof we can follow the argument in \cite{BOP}. By \eqref{eq:Sobolevmista}, and recalling b), we get
\begin{equation*}
\rho\left(\varphi_{k}^{\alpha/2}\right)^2 \geq 
\mathcal{S}_{n}\|\varphi_{k}^{\alpha/2}\|_{L^{2^*}(\Omega)}^2 = 
\mathcal{S}_n \left(\int_{\Omega}\varphi_{k}^{(2^{*}\alpha) /2}\, dx\right)^{2/2^*}
\stackrel{b)}{=} \mathcal{S}_{n} \left( \int_{\Omega}\varphi_{k}^{(\alpha-1)m'}\,dx\right)^{2/2^*},
\end{equation*}
\noindent which, combined with \eqref{eq:PreSobolev} gives
\begin{equation*}
\begin{aligned}
\mathcal{S}_n & \left(\dfrac{4(\alpha-1)}{\alpha^2} - \dfrac{\gamma}{\Lambda_n} \right) 
\left( \int_{\Omega}\varphi_{k}^{(\alpha-1)m'}\,dx\right)^{2/2^* - 1/m'} \\
&= \mathcal{S}_n \left(\dfrac{4(\alpha-1)}{\alpha^2} - \dfrac{\gamma}{\Lambda_n} \right)\, 
\|\varphi_{k}\|_{L^{m^{**}}(\Omega)} \leq \|f_k\|_{L^{m}(\Omega)},
\end{aligned}
\end{equation*}
\noindent and thus
\begin{equation*}
\dfrac{\mathcal{S}_n}{\Lambda_n} \,\,(\gamma(m)-\gamma)\,\, 
 \|\varphi_{k}\|_{L^{m^{**}}(\Omega)} \leq \|f_k\|_{L^{m}(\Omega)}.
\end{equation*}
Now, by using the Fatou Lemma it is sufficient to pass to the limit for $k \rightarrow +\infty$ and note that 
$\varphi_k$ converges to the solution $u_f$ of problem \eqref{eq:HardyEigenvalue}. From this we deduce the thesis.
\end{proof}

We are now ready to prove Theorem \ref{thm:EnergySolutions}.

\begin{proof}[Proof of Theorem \ref{thm:EnergySolutions}]
It is enough to combine Theorem \ref{thm:LaxMilagrmVariational} with Theorem \ref{thm:m**}.
\end{proof}

\subsection{Case ii)\,-\,$f\in L^m(\Omega)$ with $1<m<(2^*)'$}
As already observed in Section \ref{sec.Prel},
in this case we cannot ensure that $f\in\mathbb{X} = (\mathcal{X}^{1,2}(\Omega))'$,
and thus we \emph{cannot apply} the Lax-Mil\-gram Theorem to study existence and uniqueness
of solutions of problem \eqref{eq:HardyEigenvalue}. Hence, we prove 
\emph{existence and improved integrability} at the same time with a truncation argument.

\begin{proof}[Proof of Theorem \ref{thm:W1m*}]
Similarly to the proof of Theorem \ref{thm:m**}, for every $k\in\mathbb{N}$ we consider 
the unique weak solution 
$\varphi_k\in \mathcal{X}^{1,2}_+(\Omega)\cap L^\infty(\R^n)$
of the regularized problem
\begin{equation}\label{eq:Ausiliario}
\left\{ \begin{array}{rl}
\mathcal{L}u = \gamma \dfrac{\varphi_{k-1}}{|x|^2 + 1/k} + f_k(x) & \textrm{in } \Omega,\\
u =0 & \textrm{in } \mathbb{R}^{n}\setminus \Omega,
\end{array}\right.
\end{equation}
\noindent where $\varphi_0\equiv 0$ and $f_{k}(x):= \min\{f(x),k\}$. 
By arguing exactly as in the \emph{incipit}
of the proof of Lemma \ref{lem:convergenceApprox}, we easily see that
 $\varphi_k$ is well-defined for every $k\geq 1$, and 
 the sequence $\{\varphi_k\}_k$ is non-decreasing.
 We then define the number $\alpha\in (1,2)$ as in \eqref{eq:DefAlpha},
 and we use
$$v_k := (\varphi_k + \varepsilon)^{\alpha-1} - \varepsilon^{\alpha-1}, \qquad \varepsilon>0,$$
\noindent as test function in the variational formulation of \eqref{eq:Ausiliario}. Recalling that
$$(\varphi_k +\varepsilon)(x)-(\varphi_k+\varepsilon)(y)= \varphi_{k}(x)-\varphi_{k}(y),$$
\noindent and making the same computations performed in the proof of Theorem \ref{thm:m**}, we get
\begin{equation}\label{eq:LHS}
\mathcal{B}\left(\varphi_k,v_k\right) = \mathcal{B}\left(\varphi_k,(\varphi_k+\varepsilon)^{\alpha-1}
- \varepsilon^{\alpha-1}\right)\geq \dfrac{4(\alpha-1)}{\alpha^2}
\rho\left((\varphi_k+\varepsilon)^{\alpha/2} - \varepsilon^{\alpha/2}\right)^2,
\end{equation}
\noindent and, using the monotonicity of $\{\varphi_k\}_k$ and the mixed Hardy inequality, we also find
\begin{equation}\label{eq:RHS}
\begin{aligned}
\gamma & \int_{\Omega}\dfrac{\varphi_k [(\varphi_k + \varepsilon)^{\alpha-1}-
\varepsilon^{\alpha-1}]}{|x|^2}\, dx + \int_{\Omega}f_k 
[(\varphi_k +\varepsilon)^{\alpha-1}-\varepsilon^{\alpha-1}]\, dx \\
&= \gamma \int_{\Omega}\dfrac{\left((\varphi_k +\varepsilon)^{\alpha/2} -
\varepsilon^{\alpha/2}\right)^2 }{|x|^2}\, dx \\
& \quad + \gamma \int_{\Omega}
\dfrac{\varphi_k [(\varphi_k + \varepsilon)^{\alpha-1}-\varepsilon^{\alpha-1}]-
[(\varphi_k +\varepsilon)^{\alpha/2} -\varepsilon^{\alpha/2}]^2}{|x|^2} \, dx\\
& \quad +\int_{\Omega}f_k 
 [(\varphi_k +\varepsilon)^{\alpha-1}-\varepsilon^{\alpha-1}]\, dx\\
&\leq \dfrac{\gamma}{\Lambda_n}
 \rho\left((\varphi_k+\varepsilon)^{\alpha/2}-\varepsilon^{\alpha/2}\right)^2 \\
& \quad +  \gamma \int_{\Omega}
 \dfrac{\varphi_k [(\varphi_k + \varepsilon)^{\alpha-1}-\varepsilon^{\alpha-1}]
  -[(\varphi_k +\varepsilon)^{\alpha/2} -\varepsilon^{\alpha/2}]^2}{|x|^2} \, dx \\
&\quad + \int_{\Omega}f_k [(\varphi_k +\varepsilon)^{\alpha-1}-\varepsilon^{\alpha-1}]\, dx\\
\end{aligned}
\end{equation}
Combining \eqref{eq:LHS}, \eqref{eq:RHS}, we finally get
\begin{equation}\label{eq:bdd1}
\begin{split}
\left( \dfrac{4(\alpha-1)}{\alpha^2} - \dfrac{\gamma}{\Lambda_n} \right) 
&\rho\left((\varphi_k+\varepsilon)^{\alpha/2} -\varepsilon^{\alpha/2}\right)^2 \leq   
\int_{\Omega}f_k [(\varphi_k +\varepsilon)^{\alpha-1}-\varepsilon^{\alpha-1}]\, dx\\
&+ \gamma \int_{\Omega}\dfrac{\varphi_k [(\varphi_k + \varepsilon)^{\alpha-1}-\varepsilon^{\alpha-1}]
 -[(\varphi_k +\varepsilon)^{\alpha/2} -\varepsilon^{\alpha/2}]^2}{|x|^2} \, dx.
\end{split}
\end{equation}
Hence, using \eqref{eq:Sobolevmista}, we get
\begin{equation*}
\begin{split}
	\rho\left((\varphi_k+\varepsilon)^{\alpha/2} -\varepsilon^{\alpha/2}\right)^2 & 
	\geq \mathcal{S}_{n}\|(\varphi_k+\varepsilon)^{\alpha/2} -\varepsilon^{\alpha/2}
	\|_{L^{2^*}(\Omega)}^2 \\
	& = \mathcal{S}_n \left(\int_{\Omega}
	[(\varphi_k+\varepsilon)^{\alpha/2} -\varepsilon^{\alpha/2}]^{2^*}\, dx\right)^{2/2^*}.
	\end{split}
\end{equation*}
Now, for $k \in \N$ fixed, thanks to the Lebesgue's dominated convergence theorem we can pass to the limit for $\varepsilon$ that goes to $0$, and thus we obtain
\begin{equation}\label{eq:lastepsilon}
	\mathcal{S}_{n} \left( \dfrac{4(\alpha-1)}{\alpha^2} - \dfrac{\gamma}{\Lambda_n} \right) 
	\left(\int_{\Omega}\varphi_{k}^{(2^{*}\alpha) /2}\, dx\right)^{2/2^*} 
	\leq \int_{\Omega}f_k \varphi_k^{\alpha-1}\, dx.
\end{equation}
Hence, applying H\"older inequality in the right hand side of \eqref{eq:lastepsilon}, we deduce
\begin{equation*}
	\mathcal{S}_{n} \left( \dfrac{4(\alpha-1)}{\alpha^2} - \dfrac{\gamma}{\Lambda_n} \right)
	 \left(\int_{\Omega}\varphi_{k}^{(2^{*}\alpha) /2}\, dx\right)^{2/2^*} 
	 \leq \|f_k\|_{L^m(\Omega)} \left(\int_{\Omega}  \varphi_k^{(\alpha-1)m'}\, dx \right)^\frac{1}{m'}.
\end{equation*}
Using the definition of $\alpha$ in \eqref{eq:DefAlpha}
and recalling that $\tfrac{\alpha}{2}\,2^{*} = \tfrac{n m}{n-2m} = m^{**}=(\alpha -1)m'$, we deduce
\begin{equation}\label{eq:bdd2}
	\mathcal{S}_{n} \left( \dfrac{4(\alpha-1)}{\alpha^2} - \dfrac{\gamma}{\Lambda_n} \right) 
	\|\varphi_k\|_{L^{m^{**}}(\Omega)}  \leq \|f_k\|_{L^m(\Omega)} \leq \|f\|_{L^m(\Omega)}.
\end{equation}
Hence, by \eqref{eq:bdd2} we deduce
\begin{equation}\label{eq:phik}
\|\varphi_k\|_{L^{m^{**}}(\Omega)}  \leq \mathcal{K}(n, \alpha, \gamma, f) =: \mathcal{K}.
\end{equation}
Exploiting the mixed Hardy inequality (see Theorem \ref{thm:Main2}) to the term $\rho\left((\varphi_k+\varepsilon)^{\alpha/2} -\varepsilon^{\alpha/2}\right)^2$, we have
\begin{equation}\label{eq:detail0}
\rho\left((\varphi_k+\varepsilon)^{\alpha/2} -\varepsilon^{\alpha/2}\right)^2 \geq   
\Lambda_n \int_{\Omega}\dfrac{[(\varphi_k +\varepsilon)^{\alpha/2} -\varepsilon^{\alpha/2}]^2}{|x|^2} \, dx.
\end{equation}
Going back to \eqref{eq:bdd1}, and using \eqref{eq:detail0}, we get

\begin{equation}\label{eq:detail1}
	\begin{split}
		\Lambda_n  &	\left( \dfrac{4(\alpha-1)}{\alpha^2} - \dfrac{\gamma}{\Lambda_n} \right)
		\int_{\Omega}\dfrac{[(\varphi_k +\varepsilon)^{\alpha/2} -\varepsilon^{\alpha/2}]^2}{|x|^2} \, dx \leq
		\left( \dfrac{4(\alpha-1)}{\alpha^2} - \dfrac{\gamma}{\Lambda_n} \right) 
		\rho\left((\varphi_k+\varepsilon)^{\alpha/2} -\varepsilon^{\alpha/2}\right)^2\\ \leq &
		\int_{\Omega}f_k [(\varphi_k +\varepsilon)^{\alpha-1}-\varepsilon^{\alpha-1}]\, dx + \gamma \int_{\Omega}\dfrac{\varphi_k [(\varphi_k + \varepsilon)^{\alpha-1}-\varepsilon^{\alpha-1}]
			-[(\varphi_k +\varepsilon)^{\alpha/2} -\varepsilon^{\alpha/2}]^2}{|x|^2} \, dx.
	\end{split}
\end{equation}
Let $k$ be fixed, by letting $\varepsilon \rightarrow 0$, we obtain
\begin{equation}\label{eq:detail2}
		\Lambda_n  	\left( \dfrac{4(\alpha-1)}{\alpha^2} - \dfrac{\gamma}{\Lambda_n} \right)
		\int_{\Omega}\dfrac{\varphi_k^{\alpha}}{|x|^2} \, dx \leq
		\int_{\Omega}f_k \cdot  \varphi_k^{\alpha-1}\, dx.
\end{equation}
Making use of H\"older inequality in the right hand side of \eqref{eq:detail2}, we deduce that
\begin{equation}\label{eq:detail3}
	\begin{split}
	\Lambda_n  	\left( \dfrac{4(\alpha-1)}{\alpha^2} - \dfrac{\gamma}{\Lambda_n} \right)
	\int_{\Omega}\dfrac{\varphi_k^{\alpha}}{|x|^2} \, dx &\leq
	\int_{\Omega}f_k \cdot  \varphi_k^{\alpha-1}\, dx \\
	&\leq \left(\int_{\Omega} f_k^m \,dx\right)^{1/m} \left(\int_{\Omega} \varphi_k^{(\alpha-1)m'}\, dx\right)^{1/m'}\\
	&\leq \|f\|_{L^m(\Omega)} \cdot \|\varphi_k\|_{L^{m^{**}(\Omega)}}^{m^{**}/m'}.
	\end{split}
\end{equation}
As a consequence of \eqref{eq:detail3}, taking into account \eqref{eq:phik}, we are able to conclude that
\begin{equation}\label{eq:uniformHardy}
	\int_\Omega \dfrac{\varphi_k^\alpha}{|x|^2} \,dx \leq \mathcal{E} \|f\|_{L^m(\Omega)},
\end{equation}
where $\mathcal{E}:=\mathcal{K}^{m^{**}/m'} \Lambda_n^{-1} (4(\alpha-1)/\alpha^2-\gamma/\Lambda_n)^{-1}$ is a positive constant which does not depend on $k$.

Our aim is to show that the sequence $\{\varphi_k\}_k$ is bounded in $W^{1,m^*}_0(\Omega)$. 
In order to prove this fact, from \eqref{eq:bdd1}, we have
\begin{equation}\label{eq:bdd3}
	\begin{split}
	&\left( \dfrac{4(\alpha-1)}{\alpha^2} - \dfrac{\gamma}{\Lambda_n} \right) 
	\left(\frac{\alpha}{2}\right)^2 \int_{\Omega} \frac{|\nabla \varphi_k|^2}
	{(\varphi_k+\varepsilon)^{2-\alpha}} \, dx \\
	& \qquad
	 \leq \left( \dfrac{4(\alpha-1)}{\alpha^2} - \dfrac{\gamma}{\Lambda_n} \right)
	 \rho\left((\varphi_k+\varepsilon)^{\alpha/2} - \varepsilon^{\alpha/2}\right)^2 \\
	&\qquad \leq  \gamma \int_{\Omega}
	\dfrac{\varphi_k [(\varphi_k + \varepsilon)^{\alpha-1}-\varepsilon^{\alpha-1}]-
	[(\varphi_k +\varepsilon)^{\alpha/2} -\varepsilon^{\alpha/2}]^2}{|x|^2} \, dx \\
	&\qquad\qquad+ \int_{\Omega}f_k [(\varphi_k +\varepsilon)^{\alpha-1}-\varepsilon^{\alpha-1}]\, dx\\
	& \qquad (\text{since $0<\alpha-1<1$})\\
	&\qquad \leq  \gamma \int_{\Omega}
	\dfrac{\varphi_k [\varphi_k^{\alpha-1} + \varepsilon^{\alpha-1}-\varepsilon^{\alpha-1}]}{|x|^2} \, dx + \int_{\Omega}f_k [\varphi_k^{\alpha-1} +\varepsilon^{\alpha-1}-\varepsilon^{\alpha-1}]\, dx\\
	&\qquad =  \gamma \int_{\Omega}
	\dfrac{\varphi_k^{\alpha}}{|x|^2} \, dx + \int_{\Omega}f_k \cdot \varphi_k^{\alpha-1}\, dx.\\ 
	\end{split}
\end{equation}
If we fix $\varepsilon=R$ in  \eqref{eq:bdd3}, using \eqref{eq:bdd2} and  \eqref{eq:uniformHardy}, we get
\begin{equation*}
	\begin{split}
		 \int_{\Omega} \frac{|\nabla \varphi_k|^2}
		 {(\varphi_k+R)^{2-\alpha}} \, dx \leq \mathcal{C}(m,n, \gamma, \mathcal{S}_n)=: \mathcal C.
	\end{split}
\end{equation*}
Thanks to H\"older inequality with exponents $(2/m^*,2/(2-m^*))$ we get
\begin{equation*}
	\begin{split}
	\int_{\Omega} |\nabla \varphi_k|^{m^*} \, dx & \leq 	
	\int_{\Omega} \frac{|\nabla \varphi_k|^{m^*}}
	{(\varphi_k+R)^\frac{(2-\alpha)m^*}{2}} (\varphi_k+R)^\frac{(2-\alpha)m^*}{2}\, dx \\
	& \leq \mathcal{C}^{\frac{m^*}{2}} \left(\int_{\Omega} 
	(\varphi_k+R)^\frac{(2-\alpha)m^*}{2-m^*}\, dx \right)^{\frac{2-m^*}{2}}.
	\end{split}
\end{equation*}
Finally, we have
\begin{equation*}
	\begin{split}
		\int_{\Omega} |\nabla \varphi_k|^{m^*} \, dx \leq \mathcal{C}^{\frac{m^*}{2}} 
		\left(\int_{\Omega} (\varphi_k+R)^{m^{**}}\, dx \right)^{\frac{2-m^*}{2}},
	\end{split}
\end{equation*}
which implies the boundedness of $\{\varphi_k\}_k$ in $W_0^{1,m^*}(\Omega)$. Since $m>1$,  we deduce that there exists $u$ such that (up to subsequences)
$$\varphi_k \rightharpoonup u \text{ in } W^{1,m^*}_0(\Omega)  \text{ and a.e. in } \Omega, 
\frac{\varphi_k}{|x|^2} \rightarrow \frac{u}{|x|^2} \text{ strongly in } L^1(\Omega).$$
We recall that $\varphi_k$ weakly solves \eqref{eq:Ausiliario}, namely
\[
\begin{split}
	\int_{\R^n} \nabla \varphi_k \cdot \nabla v \, dx &+ 
	\frac{C_{n,s}}{2} \iint_{\R^{2n}} \frac{(\varphi_k(x)-\varphi_k(y))(v(x)-v(y))}{|x-y|^{n+2s}} \, dx dy \\
	&= \gamma \int_\Omega \dfrac{\varphi_{k-1}}{|x|^2 + 1/k}v \, dx  + \int_\Omega f_k(x) v \, dx,
\end{split}
\]
for every $v \in C_0^\infty(\Omega)$.

We note that, for any $v \in C_0^\infty(\Omega)$ the following
$$W^{1,m^*}_0(\Omega) \ni g \mapsto \int_{\R^n} \nabla g \cdot \nabla v \, dx + \frac{C_{n,s}}{2} \iint_{\R^{2n}} \frac{(g(x)-g(y))(v(x)-v(y))}{|x-y|^{n+2s}} \, dx dy$$
is a bounded linear functional in $W^{1,m^*}_0(\Omega)$. Then for every $v \in C_0^\infty(\Omega)$, we obtain
\[
\begin{split}
&\int_{\R^n} \nabla \varphi_k \cdot \nabla v \, dx + 
\frac{C_{n,s}}{2} \iint_{\R^{2n}} \frac{(\varphi_k(x)-\varphi_k(y))(v(x)-v(y))}{|x-y|^{n+2s}} \, dx dy\\
& \qquad \longrightarrow \int_{\R^n} \nabla u \cdot \nabla v \, dx + \frac{C_{n,s}}{2} \iint_{\R^{2n}} \frac{(u(x)-u(y))(v(x)-v(y))}{|x-y|^{n+2s}} \, dx dy \text{ as } k \rightarrow +\infty.
\end{split}
\]
Collecting the last information on the operator, the convergence of $\varphi_k$ discussed before, and using the Lebesgue's dominated convergence theorem in the right hand side of the regularized problem \eqref{eq:Ausiliario}, we can pass to the limit in it, showing that $u$ is a solution (as defined in \eqref{eq:del_thm_1.5}) belonging to $W^{1,m^*}_0(\Omega)$. The solution $u$ is positive in $\Omega$ being $\{\varphi_k\}_k$  non-decreasing.

Now we want to prove that this solution $u \in W^{1,m^*}_0(\Omega)$, obtained as limit of solutions of the regularized problems \eqref{eq:Ausiliario}, is unique. Consider another sequence of functions $\{g_k\}_k$ converging to $f$ in $L^m(\Omega)$. Then let us consider the following regularized problem
\begin{equation*}
	\left\{ \begin{array}{rl}
		\mathcal{L}w_k = \gamma \dfrac{w_{k-1}}{|x|^2 + 1/k} + g_k(x) & \textrm{in } \Omega,\\
		w_k =0 & \textrm{in } \mathbb{R}^{n}\setminus \Omega.
	\end{array}\right.
\end{equation*}

Hence, as before we can prove that, up to subsequences, $w_k$ converges to a solution $w$ belonging to $W^{1,m^*}_0(\Omega)$. Hence, arguing on the function $\varphi_k-w_k$, we deduce that it solves
\begin{equation*}
	\left\{ \begin{array}{rl}
		\mathcal{L} (\varphi_k-w_k) = \gamma \dfrac{(\varphi_{k-1}-w_{k-1})}{|x|^2 + 1/k} + f_k(x)-g_k(x) & \textrm{in } \Omega,\\
		\varphi_k-w_k =0 & \textrm{in } \mathbb{R}^{n}\setminus \Omega.
	\end{array}\right.
\end{equation*}
Arguing as above, we can derive an analogous estimate to \eqref{eq:bdd2} for the difference function
\begin{equation*}
	\mathcal{S}_{n} \left( \dfrac{4(\alpha-1)}{\alpha^2} - \dfrac{\gamma}{\Lambda_n} \right) 
	\|\varphi_k-w_k\|_{L^{m^{**}}(\Omega)}  \leq \|f_k-g_k\|_{L^m(\Omega)}.
\end{equation*}
Passing to the limit for $k$ that goes to $+\infty$, we easily deduce that
\begin{equation*}
		 \|u-w\|_{L^{m^{**}}(\Omega)} =0,
\end{equation*}
which immediately implies the uniqueness.
\end{proof}

\subsection{Case iii)\,-\,$f\in L^1(\Omega)$}\label{subsec:caseiii}

Taking into account all the results established so far, we end this section
by proving an \emph{optimal condition} for the existence of a positive duality solution
of  problem \eqref{eq:HardyEigenvalue}  when $\beta\in (0,\Lambda_n)$ and $f$ is just a \emph{positive $L^1$ function} in the sense of Definition \ref{defin:duality}.

\begin{proof}[Proof of Theorem \ref{thm:solvabilityL1main}]
 (\emph{Necessity of \eqref{eq:conditionfPhi}}) We first assume that
 there exists a positive duality solution $u$ of problem
 \eqref{eq:HardyEigenvalue}, and we
 show that the
 `integra\-bility condition' \eqref{eq:conditionfPhi} is fulfilled by datum $f$.
   
 To this end (and similarly to the proof
 of Theorems \ref{thm:m**}\,-\,\ref{thm:W1m*}), for every $k\geq 1$ we consider the unique weak solution
$\phi_k\in \mathcal{X}_+^{1,2}(\Omega)\cap L^\infty(\R^n)$ of the problem
\begin{equation} \label{eq:PbApprox}
  \begin{cases}
   \LL u = g_k & \text{in $\Omega$}, \\
   u = 0 & \text{in $\R^n\setminus\Omega$},
  \end{cases}
 \end{equation}
 where $\phi_0\equiv 0$ and, to simplify the notation, we have set
 $$g_k(x) = \gamma\frac{\phi_{k-1}}{|x|^2+1/k}+1.$$
 Notice that, owing to Lemma \ref{lem:convergenceApprox} (with $f\equiv 1$), we have
 \begin{itemize}
  \item[a)] $\phi_k$ is well-defined for every $k\geq 1$, and the sequence
  $\{\phi_k\}_k$ is non-decreasing;
  \item[b)] $\phi_k\to \Phi_\Omega$ as $k\to+\infty$ in $L^p(\Omega)$, for every $1\leq p<2^*$.
 \end{itemize}
 Now, choosing  $w:=\phi_k$ and $g:=g_k$ in Definition \ref{defin:duality}
 (notice that this is legitimate, since we have $g_k\in L^\infty(\Omega)$ and
 $\phi_k\in \mathcal{X}_+^{1,2}(\Omega)\cap L^\infty(\R^n)$
 solves \eqref{eq:PbApprox}), we get
 \begin{equation} \label{eq:passareallimite}
  \begin{split}
   \int_\Omega f\phi_k\,dx & = \int_\Omega u \left(g_k-\gamma 
   \frac{\phi_k}{|x|^2}\right)  dx \\
   & (\text{by definition of $g_k$}) \\
   & = \int_\Omega  u\left(1+\gamma\frac{\phi_{k-1}}{|x|^2+1/k} - 
   \gamma\frac{\phi_k}{|x|^2}\right)dx \\
   & (\text{by monotonicity of $\phi_k$}) \\
   & \leq \int_\Omega u\left[1+\gamma \phi_k \left(\frac{1}{|x|^2+1/k} - 
   \frac{1}{|x|^2}\right)\right]dx \\
   & (\text{since $\phi_k$ is positive}) \\
   & \leq \int_\Omega u\,dx = \|u\|_{L^1(\Omega)} < \infty.
  \end{split}
 \end{equation}
  From this, by passing to the limit as $k\to\infty$ in \eqref{eq:passareallimite} with the aid of
 the Monotone Con\-ver\-gence Theorem (and recalling b)), we immediately obtain
 the desired  \eqref{eq:conditionfPhi}.
 \medskip
 
 \noindent \emph{Proof}. (\emph{Sufficiency of \eqref{eq:conditionfPhi}}).
 We now assume that $f$ satisfies condition \eqref{eq:conditionfPhi}, and we prove that
 there exists a solution $u$ of problem \eqref{eq:HardyEigenvalue}, further satisfying properties 1)\,-\,2).
  
 To this end, for every fixed $k\geq 1$ we consider once again
 the unique variational solution 
 $\varphi_k\in \mathcal{X}_+^{1,2}(\Omega)\cap L^\infty(\R^n)$ 
 of the approximated problem
 \begin{equation} \label{eq:PbApproxBis}
  \begin{cases}
   \LL u = \gamma\frac{\varphi_{k-1}}{|x|^2+1/k}+f_k & \text{in $\Omega$}, \\
   u = 0 & \text{in $\R^n\setminus\Omega$},
  \end{cases}
 \end{equation}
 where $\varphi_0\equiv 0$ and
 $f_k = T_k(f)$ (here, $T_k(\cdot)$ is the truncation operator
 defined in \eqref{eq:defTk}). 
 By arguing as in the \emph{incipit} of the proof
 of Lemma \ref{lem:convergenceApprox}, we see that
 $\varphi_k$ is well-defined for every $k\geq 1$, and the sequence
  $\{\varphi_k\}_k$ is non-decreasing; hence, we can define
 $$u(x) = \lim_{n\to\infty}\varphi_k(x) = \sup\{\varphi_k(x):\,n\in\mathbb{N}\}\geq 0.$$
 We then prove that this $u$ is a duality solution of \eqref{eq:HardyEigenvalue}, further satisfying 1)\,-\,2).
 
 \vspace*{0.1cm}
 
  a)\,\,\emph{$u$ is a duality solution of \eqref{eq:HardyEigenvalue}.} First of all, we show that $u\in L^1(\Omega)$.
  To this end it suffices to observe that, using $\varphi_k$ as a test
  function for the (variational) equation solved by $\Phi_\Omega$
  and viceversa, see \eqref{eq:weakform}, from the assumed condition
  \eqref{eq:conditionfPhi} we have 
 \begin{equation} \label{eq:stimauL1}
 \begin{split}
  \int_{\Omega}\varphi_k\,dx & = -\gamma\int_\Omega\frac{\Phi_\Omega \varphi_k}{|x|^2}\,dx+
  \mathcal{B}(\Phi_\Omega,\varphi_k) \\
  & (\text{since $\varphi_k$ solves \eqref{eq:PbApproxBis}}) \\
  & = \gamma\int_\Omega 
  \Phi_\Omega \Big(\frac{\varphi_{k-1}}{|x|^2+1/k}-\frac{\varphi_k}{|x|^2}\Big)dx
  +\int_\Omega f_k\Phi_\Omega\,dx \\
  & \leq \int_\Omega f_k\Phi_\Omega\,dx 
  \leq \int_\Omega f\Phi_\Omega\,dx = \mathbf{c}<\infty;
  \end{split}
 \end{equation}
 this, together with the Monotone Convergence theorem, proves at once that $u\in L^1(\Omega)$
 (in particu\-lar, $u < \infty$ a.e.\,in $\Omega$), and $\varphi_k\to u$ in $L^1(\Omega)$.
 
 Taking into account Definition \ref{defin:duality}, we now turn to prove that
 ${u}/{|x|^2}\in L^1(\Omega).$
 To this end, let $\phi_1\in \mathcal{X}^{1,2}_+(\Omega)\cap L^\infty(\Omega)$ be  the unique solution of 
 $$\begin{cases}
 \LL u = 1 & \text{in $\Omega$} \\
 u = 0 & \text{in $\R^n\setminus\Omega$}.
 \end{cases}$$
 Moreover, let
 $r > 0$ be such that $B_r(0)\Subset\Omega$.
 Owing to the Weak Harnack inequality in 
 \cite[Theorem 8.1]{GarainKinnunen}, we can find a constant $C > 0$, depending
 on $\phi_1$ and on $r$, such that
 $$\text{$\phi_1 \geq C$ a.e.\,in $B_r(0)$};$$
 as a consequence, using $\varphi_k$ as a test function for the equation solved by $\phi_1$
 and viceversa, from the above \eqref{eq:stimauL1} we obtain the following estimate
 \begin{equation} \label{eq:uoverxL1}
 \begin{split}
  \int_\Omega \frac{\varphi_{k-1}}{|x|^2+1/k}\,dx& \leq 
  \frac{1}{C}\int_{B_r(0)}\frac{\phi_1 \varphi_{k-1}}{|x|^2+1/k}\,dx
  + \frac{1}{r^2}\int_{\Omega\setminus B_r(0)}\varphi_{k-1}\,dx \\
  & (\text{since $\phi_1,\,\varphi_{k-1}\geq 0$ a.e.\,in $\Omega$}) \\
  & \leq \frac{1}{C}\int_{\Omega}\frac{\phi_1 \varphi_{k-1}}{|x|^2+1/k}\,dx
  + \frac{1}{r^2}\int_{\Omega}\varphi_{k-1}\,dx \\
  & (\text{using $\phi_1$ as a test function for the equation solved by $\varphi_k$}) \\
  & \leq \frac{1}{C} \left(\mathcal{B}(\varphi_{k},\phi_1)-\int_\Omega f_k \phi_1 \,dx \right)+
  \frac{1}{r^2}\int_{\Omega}\varphi_{k-1}\,dx \\
  & (\text{using $\varphi_k$ as a test function for the equation solved by $\phi_1$}) \\
  & \leq \frac{1}{C}\int_\Omega \varphi_k\,dx+\frac{1}{r^2}\int_\Omega \varphi_{k-1}\,dx \leq \mathbf{c} < \infty,
 \end{split}
 \end{equation}
 where $\mathbf{c} > 0$ is a constant independent of $k$.
 From this, again by the Monotone Convergence Theorem we infer that 
 \begin{equation*} 
  \frac{u}{|x|^2}\in L^1(\Omega)\quad\text{and}\quad
 \frac{\varphi_{k-1}}{|x|^2+1/k}\to \frac{u}{|x|^2}\,\,\text{in $L^1(\Omega)$}.
 \end{equation*}
 Gathering all these facts, we can easily prove that $u$ is a duality solution of
 problem \eqref{eq:HardyEigenvalue} in the sense of Definition \ref{defin:duality}: in fact, we already know that $u,\,u/|x|^2\in L^1(\Omega)$; 
 moreover, since $\varphi_k$ solves \eqref{eq:PbApproxBis}, we get
 $$\int_\Omega \mathcal{B}(\varphi_k,\psi) \,dx = \int_\Omega 
 \gamma\frac{\varphi_{k-1}\psi}{|x|^2+1/k}dx + \int_\Omega f_k\psi\,dx
 \quad\forall\,\,\psi\in \mathcal{X}^{1,2}(\Omega) \cap L^\infty(\Omega).$$
 If, in particular, $\psi$ is a solution of \eqref{eq:DirichletProblemaux}, we have
  $$\int_\Omega g \varphi_k = \int_\Omega \gamma\frac{\varphi_{k-1}\psi}{|x|^2+1/k}dx + 
  \int_\Omega f_k\psi\,dx.$$
 Then, letting $k\to\infty$ (and observing that $f_k \to f$ in $L^1(\Omega)$),
 by the above con\-siderations we conclude that $u$ is a solution of \eqref{eq:HardyEigenvalue}.
 \medskip
 
 b)\,\,\emph{$u$ satisfies \emph{1)\,-\,2)}}. We first prove that validity of property 1).
 To this end, we arbitra\-ri\-ly fix $j,k\in\mathbb{N}$ and we use
 $T_k(\varphi_j)$ as a test function for the equation solved by $\varphi_j$:
 taking into account the computation in \eqref{eq:uoverxL1}, we get
 \begin{align*}
  \int_\Omega |\nabla T_k(\varphi_j)|^2\,dx &
  \leq \mathcal{B}(\varphi_j,T_k(\varphi_j)) = \gamma\int_\Omega \frac{\varphi_{j-1}\,T_k(\varphi_j)}
  {|x|^2+1/j}\,dx
  +\int_\Omega f_j\,T_k(\varphi_j)\,dx \\
  & (\text{since $\varphi_{j-1},\,f_j\geq 0$ and $0\leq T_k(\varphi_k)\leq k$}) \\
  & \leq k\Big(\gamma\int_\Omega \frac{\varphi_{j-1}}{|x|^2+1/j}\,dx
  +\int_\Omega f\,dx\Big) \\
  & (\text{using \eqref{eq:uoverxL1}, and since $f\in L^1(\Omega)$}) \\
  & \leq \mathbf{c}_k < \infty,
 \end{align*}
 and this shows that the sequence $\{T_k(\varphi_j)\}_j$ is bounded in $\mathcal{X}^{1,2}(\Omega)$. Then,
 by the Sobolev Embedding Theorem there exists a function $g\in \mathcal{X}^{1,2}(\Omega)$ such that
 (up to a subsequence)
 \medskip
 
 i)\,\,$T_k(\varphi_j)\rightharpoonup g$ in $\mathcal{X}^{1,2}(\Omega)$ as $j\to\infty$;
 
 ii)\,\,$T_k(\varphi_j)\to g$ strongly in $L^2(\Omega)$ and pointwise a.e.\,in $\Omega$.
 \medskip
 
 \noindent On the other hand, since we also have $T_k(\varphi_j)\to T_k(u)$ pointiwse a.e.\,in $\Omega$
 as $j\to\infty$ (recall that, by definition, $u$ is the pointwise limit of $\varphi_j$), we conclude that 
 $$g = T_k(u),$$
 and this proves that $T_k(u)\in\mathcal{X}^{1,2}(\Omega)$ for every $k\in\mathbb{N}$, as desired.
 \vspace*{0.1cm}

 We then turn to prove the validity of property 2). To this end, we fix
 $\beta < 1/2$ (to be conveniently chosen later on), and we use the function
 $w_k=1-(1+\varphi_k)^{2\beta-1}$ as a test function for the equation solved
 by $\varphi_k$ (see, precisely, \eqref{eq:PbApproxBis}): this gives
 \begin{equation} \label{eq:doveRHSLHS}
  \mathcal{B}(\varphi_k,w_k) = \gamma\int_{\Omega}\frac{\varphi_{k-1}w_k}{|x|^2+1/k}\,dx
  +\int_\Omega f_kw_k\,dx.
 \end{equation}
 We now observe that, since
  $\beta < 1/2$, we have $w_k\leq 1$; as a consequence, for the second term of the right hand side
  of the above \eqref{eq:doveRHSLHS}, we have
 $$\int_\Omega f_k w_k \, dx \leq \int_\Omega f_k \, dx \leq \|f\|_{L^1(\Omega)}.$$ 
 For the second term, thanks to \eqref{eq:uoverxL1}, we have
 $$ \gamma \int_\Omega \frac{\varphi_{k-1}w_k}{|x|^2+1/k} \, dx \leq \gamma \int_\Omega 
 \frac{\varphi_{k-1}}{|x|^2+1/k}  \, dx \leq \mathbf{c}.$$
 Combining this last two information we deduce
 $$\|f\|_{L^1(\Omega)} + \mathbf{c} \geq \mathcal{B}(\varphi_k,w_k).$$
 Now, let us compute the bilinear form $\mathcal{B}(\varphi_k,w_k)$:
 \[
 \begin{split}
 	\mathcal{B}(\varphi_k,w_k)=  &|2\beta-1|\int_\Omega \frac{|\nabla \varphi_k|^2}{(1+\varphi_k)^{2(1-\beta)}} \,dx \\
 &+ \frac{C_{n,s}}{2}\iint_{\R^{2n}} \frac{(\varphi_k(x)-\varphi_k(y))((1+\varphi_k(y))^{2\beta-1} - (1+\varphi_k(x))^{2\beta-1})}{|x-y|^{n+2s}}.
 \end{split}
 \]
  We note that the nonlocal part of the bilinear form, i.e.\,the second term on the right hand side of the previous equation, is non-negative. This assertion follows from the following couple of facts
  $$\begin{cases}
  	\varphi_k(x)-\varphi_k(y) \geq 0 \ \Rightarrow (1+\varphi_k(y))^{2\beta-1} - (1+\varphi_k(x))^{2\beta-1} \geq 0,\\
  	\varphi_k(x)-\varphi_k(y) < 0 \ \Rightarrow (1+\varphi_k(y))^{2\beta-1} - (1+\varphi_k(x))^{2\beta-1} < 0.
  \end{cases}
  $$
  Hence, we are able to deduce that
  $$\|f\|_{L^1(\Omega)} + \mathbf{c} \geq 
  \mathcal{B}(\varphi_k,w_k) \geq  \frac{|2 \beta - 1|}{\beta^2} 
  \int_\Omega |\nabla (1+\varphi_k)^\beta|^2 \, dx.$$
 Thus from the previous estimate we get
 \begin{equation}\label{unifest}
 	\int_\Omega \frac{|\nabla \varphi_k|^2}{(1+\varphi_k)^{2(1-\beta)}}\, dx \leq \frac{\|f\|_{L^1(\Omega)} + \mathbf{c}}{|2\beta-1|}.
 \end{equation}
 On the other hand, for any given $p<2$ we can write
 $$ \int_\Omega |\nabla \varphi_k|^p \,dx = \int_\Omega \frac{|\nabla \varphi_k|^p}
 {(1+\varphi_k)^{2(1-\beta)\frac{p}{2}}} (1+\varphi_k)^{2(1-\beta)\frac{p}{2}}\, dx.$$
 Applying the H\"older inequality with exponents $(2/p,2/(2-p))$ on the right hand side, and the Sobolev inequality on the left hand side, we obtain
 \begin{align*}
  \mathcal{S}_n^p \left(\int_\Omega \varphi_k^{p^*} \, dx\right)^\frac{p}{p^*} & 
  \leq  \int_\Omega |\nabla \varphi_k|^p \,dx \\
  & \leq  \left(\int_\Omega \frac{|\nabla \varphi_k|^2}{(1+\varphi_k)^{2(1-\beta)}} \, dx\right)^{\frac{p}{2}} \left(\int_\Omega (1+\varphi_k)^{\frac{2p(1-\beta)}{2-p}} \, dx\right)^{1-\frac{p}{2}}.
  \end{align*}
  Thanks to \eqref{unifest}, we get
  \begin{equation}\label{gradest}
  	\mathcal{S}_n^p \left(\int_\Omega \varphi_k^{p^*} \, dx\right)^\frac{p}{p^*} \leq  
  	\int_\Omega |\nabla \varphi_k|^p \,dx \leq C |\Omega|^{1-\frac{p}{2}} + 
  	C\left(\int_\Omega \varphi_k^{\frac{2p(1-\beta)}{2-p}} \, dx\right)^{1-\frac{p}{2}},
  \end{equation}
  where $C= \left[(\|f\|_{L^1(\Omega)} + \mathbf{c})/|2\beta-1|\right]^{p/2}$, and it is independent on 
  $k$. 
  
  Now, we can choose $\beta$ such that
  $$\frac{2p(1-\beta)}{2-p}=p^*,$$
 and since $\beta <1/2$, we have $p< \frac{n}{n-1}$. Thanks to this choice of $\beta$, we have
 $$\mathcal{S}_n^p \left(\int_\Omega \varphi_k^{p^*} \, dx\right)^\frac{p}{p^*} \leq C 
 |\Omega|^{1-\frac{p}{2}} + C\left(\int_\Omega \varphi_k^{p^*} \, dx\right)^{1-\frac{p}{2}},$$
 thus we can deduce that $\varphi_k$ is uniformly bounded in $L^{p^*}(\Omega)$. From \eqref{gradest} we also deduce that $\nabla \varphi_k$ is uniformly bounded in $L^p(\Omega)$. Combining the last two information, we get that $\varphi_k$ is uniformly bounded in $W^{1,p}_0(\Omega)$. Arguing as before, it is possible to show that up to subsequences $\varphi_k \rightharpoonup u$ in $W^{1,p}_0(\Omega)$, for $p<\frac{n}{n-1}$. This closes the proof.
\end{proof}

\appendix
\section{A convergence lemma}

The aim of this appendix is to give an auxiliary result which clarify the convergence of the sequence $\{u_k\}$ in in the proof of Theorem \ref{thm:W1m*}.

\begin{lem}[Approximated Dirichlet problems] \label{lem:convergenceApprox}
Let $\Omega\subseteq\R^n$ be a bounded open set with smooth enough boundary $\de\Omega$,
and let 
$f\in L^{\frac{2n}{n+2}}(\Omega),\,\text{$f \geq 0$ a.e.\,in $\Omega$}.$
For a fixed $\gamma\in (0,\Lambda_n)$, we let $u\in\mathcal{X}^{1,2}_+(\Omega)$ be the unique weak solution of
\begin{equation} \label{eq:PbDir}
	\begin{cases}
		\LL u = \gamma\frac{u}{|x|^2}+f & \text{in $\Omega$}, \\
		u = 0 & \text{in $\R^n\setminus\Omega$}.
	\end{cases}
\end{equation}
Setting $\varphi_0 = 0$, for every $k\geq 1$ we then consider
the unique weak solution $\varphi_k\in\mathcal{X}^{1,2}_+(\Omega) \cap L^\infty(\R^n)$ of the
following \emph{approximated} $\LL$\,-\,Dirichlet problem
\begin{equation} \label{eq:PbDirApprox}
	\begin{cases}
		\LL u = \gamma\frac{\varphi_{k-1}}{|x|^2+1/k}+f_k & \text{in $\Omega$}, \\
		u = 0 & \text{in $\R^n\setminus\Omega$},
	\end{cases}
\end{equation}
where $f_k = T_k(f) = \min\{k,f\}$. Then, 
$$\text{$\varphi_k\to u$ weakly in $\mathcal{X}^{1,2}(\Omega)$ and strongly in $L^p(\Omega)$
	for every $1\leq p<2^*$}.$$
	\end{lem}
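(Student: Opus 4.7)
The plan is to proceed in four steps: well-posedness of the approximating problems, monotonicity plus a uniform energy bound, compactness to extract a candidate limit, and identification of the limit with $u$ via passage to the limit in the weak formulation.

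\textbf{Step 1 (Well-posedness of $\varphi_k$).} I would argue inductively. The source term of \eqref{eq:PbDirApprox} is
$$g_k := \gamma\,\frac{\varphi_{k-1}}{|x|^{2}+1/k}+f_k.$$
Since $\varphi_{k-1}\in L^\infty(\R^n)$ by inductive hypothesis, $|x|^{2}+1/k\geq 1/k$ on $\Omega$, and $f_k\leq k$, we have $g_k\in L^\infty(\Omega)\subseteq \mathbb{X}$. Theorem \ref{thm:generalDirichlet} then gives a unique weak solution $\varphi_k\in\mathcal{X}^{1,2}(\Omega)$; part v) gives $\varphi_k\geq 0$ (since $g_k\geq 0$), and part i) gives $\varphi_k\in L^\infty(\R^n)$.

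\textbf{Step 2 (Monotonicity and uniform $\mathcal{X}^{1,2}$-bound).} The maximum principle in Theorem \ref{thm:generalDirichlet}, v), yields $\varphi_1\geq 0 = \varphi_0$; inductively, if $\varphi_{k-1}\geq \varphi_{k-2}$, then the source for $\varphi_k$ dominates the one for $\varphi_{k-1}$ in $\Omega$ (both $\varphi_{k-1}/(|x|^2+1/k)\geq \varphi_{k-2}/(|x|^2+1/(k-1))$ and $f_k\geq f_{k-1}$), so $\LL(\varphi_k-\varphi_{k-1})\geq 0$ and the weak maximum principle gives $\varphi_k\geq \varphi_{k-1}$. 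Using $\varphi_k$ itself as test function in the weak formulation of \eqref{eq:PbDirApprox}, monotonicity and the mixed Hardy inequality of Theorem \ref{thm:BestConstant} yield
$$\rho(\varphi_k)^2 \leq \gamma\int_\Omega \frac{\varphi_k^{\,2}}{|x|^{2}}\,dx + \int_\Omega f\varphi_k\,dx \leq \frac{\gamma}{\Lambda_n}\rho(\varphi_k)^2 + \|f\|_{L^{(2^*)'}(\Omega)}\,\mathcal{S}_n^{-1/2}\rho(\varphi_k),$$
and since $\gamma<\Lambda_n$ this gives a uniform bound $\rho(\varphi_k)\leq C$.

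\textbf{Step 3 (Convergence).} The monotone sequence $\{\varphi_k\}$ converges pointwise a.e. to some $v\geq 0$; by reflexivity of $\mathcal{X}^{1,2}(\Omega)$ and the uniform bound, a subsequence converges weakly in $\mathcal{X}^{1,2}(\Omega)$ to a function which must coincide with $v$. Monotonicity promotes this to convergence of the whole sequence, and the compact embedding $\mathcal{X}^{1,2}(\Omega)\hookrightarrow\hookrightarrow L^p(\Omega)$ for $1\leq p<2^*$ gives strong $L^p$-convergence.

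\textbf{Step 4 (Identification $v=u$).} I would pass to the limit in the weak formulation of \eqref{eq:PbDirApprox} tested against $\psi\in C_0^\infty(\Omega)$: weak convergence gives $\mathcal{B}(\varphi_k,\psi)\to \mathcal{B}(v,\psi)$, while $|f_k\psi|\leq f|\psi|\in L^1(\Omega)$ yields $\int f_k\psi\,dx \to \int f\psi\,dx$ by dominated convergence. The main obstacle is the Hardy term $\gamma\int_\Omega \varphi_{k-1}\psi/(|x|^2+1/k)\,dx$: the monotonicity $\varphi_{k-1}\leq v$ gives the pointwise bound
$$\Big|\frac{\varphi_{k-1}\psi}{|x|^2+1/k}\Big|\leq \frac{v|\psi|}{|x|^2},$$
and the majorant is integrable since, by Cauchy--Schwarz and Theorem \ref{thm:BestConstant}, $\int v|\psi|/|x|^2\,dx \leq \bigl(\int v^2/|x|^2\bigr)^{1/2}\bigl(\int\psi^2/|x|^2\bigr)^{1/2} \leq \Lambda_n^{-1}\rho(v)\rho(\psi)<\infty$. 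Dominated convergence then gives $\gamma\int \varphi_{k-1}\psi/(|x|^2+1/k)\,dx \to \gamma\int v\psi/|x|^2\,dx$. By density of $C_0^\infty(\Omega)$ in $\mathcal{X}^{1,2}(\Omega)$ and the mixed Hardy inequality, the limit equation holds against every test function in $\mathcal{X}^{1,2}(\Omega)$, so $v$ is a weak solution of \eqref{eq:PbDir}. Uniqueness of that solution (hypothesis of the lemma) forces $v=u$, completing the proof.
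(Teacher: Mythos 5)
Your proof is correct and follows essentially the same strategy as the paper: inductive well-posedness and monotonicity via the weak maximum principle, a uniform $\mathcal{X}^{1,2}$-bound from testing the equation with $\varphi_k$ combined with the mixed Hardy inequality, and identification of the pointwise/weak limit with $u$ by dominated convergence in each term of the weak formulation. The only real deviation is in Step~2: the paper first establishes $f\psi\in L^1(\Omega)$ (where $\psi$ is the pointwise limit) by testing the equation for $u$ against $\varphi_k$ and vice versa, and then bounds $\int f_k\varphi_k\leq \|f\psi\|_{L^1}$; you instead estimate $\int f\varphi_k \leq \|f\|_{L^{(2^*)'}}\mathcal{S}_n^{-1/2}\rho(\varphi_k)$ directly via Hölder and the Sobolev inequality, which is cleaner and bypasses that intermediate integrability statement. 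Your Step~4 passes to the limit against $C_0^\infty(\Omega)$ and then invokes density, whereas the paper tests directly against $\varphi\in\mathcal{X}^{1,2}(\Omega)$; both are fine, since the majorant you exhibit ($v|\psi|/|x|^2$) is integrable for any $\psi\in\mathcal{X}^{1,2}(\Omega)$ by the same Cauchy--Schwarz/Hardy argument. No gaps.
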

	\begin{proof}
    We begin with a couple of
    preliminary observations. First of all, the existence and the uniqueness of the solution $\varphi_k
 \in  \mathcal{X}_+^{1,2}(\Omega)\cap L^\infty(\R^n)$ follow
 from Theorem \ref{thm:generalDirichlet} (and from the 
 recursive structure of problem \eqref{eq:PbDirApprox}), since the right-hand side
 $$g_k(x) = \gamma\frac{\varphi_{k-1}}{|x|^2+1/k}+f_k$$
 is non-negative and bounded on $\Omega$ (as, by induction, the same is true of $\varphi_{k-1}$).	
 Furthermore, by u\-sing a classical induction argument based on the Weak Maximum Principle
for $\LL$ (and since the sequence $\{f_k\}_k$ is non-decreasing), we have that
$$\text{$\{\varphi_k(x)\}_k$ is non-negative and non-decreasing for every $x\in \Omega$}.$$  
As a consequence, we can define 
$$\text{$\psi(x) = \lim_{k\to\infty}\varphi_k(x)\in(0,+\infty]$ for all $x\in\Omega$.}$$
Now, using $\varphi_k$ as a test function for the equation solved by $u$
and viceversa, we have
\begin{align*}
	\int_{\Omega}f_k\varphi_k\,dx & \leq \int_{\Omega}f\varphi_k\,dx 
	= \mathcal{B}(u,\varphi_k)-\gamma\int_\Omega\frac{u\varphi_k}{|x|^2}\,dx \\
	& (\text{since $\varphi_k\geq\varphi_{k-1}$ and $|x|^2\leq |x|^2+1/k$}) \\
	& \leq \mathcal{B}(\varphi_k,u)-\gamma\int_\Omega\frac{\varphi_{k-1}u}{|x|^2+1/k}\,dx \\
	& = \int_\Omega f_ku\,dx \leq \int_\Omega fu\,dx;
\end{align*}
hence, by the Monotone Convergence Theorem we derive that
$$\text{$f\psi\in L^1(\Omega)$}.$$
On account of this fact, and using $\varphi_k$ as a test function
for the equation solved by $\varphi_k$ itself, we obtain the following estimate
\begin{align*}
	\rho(\varphi_k)^2 & = \mathcal{B}(\varphi_k,\varphi_k)
	= \gamma\int_\Omega\frac{\varphi_k\varphi_{k-1}}{|x|^2+1/k}\,dx
	+ \int_{\Omega}f_k\varphi_k\,dx \\
	& (\text{since $\varphi_{k-1}\leq\varphi_k\leq\psi$ a.e.\,in $\Omega$}) \\
	& \leq
	\gamma\int_\Omega\frac{\varphi_k^2}{|x|^2}\,dx
	+ \int_{\Omega}f\psi\,dx \\
	& (\text{by Hardy's inequality}) \\
	& \leq \frac{\gamma}{\Lambda_n} 
	\rho(\varphi_k)^2 + \|f\psi\|_{L^1(\Omega)}.
\end{align*}
This, together with the fact that $\gamma\in (0,\Lambda_n)$, shows that 
the sequence $\{\varphi_k\}_k$ is \emph{bounded in the Hilbert
	space $\mathcal{X}^{1,2}(\Omega)$}; hence, recalling that $\varphi_k\to\psi$ pointwise in $\Omega$
and using the Sobolev Embedding Theorem, we deduce that $\psi\in\mathcal{X}^{1,2}(\Omega)$ and
$$\text{$\varphi_k\to \psi$ weakly in $\mathcal{X}^{1,2}(\Omega)$ and strongly in $L^p(\Omega)$
	for every $1\leq p<2^*$}.$$
To complete the proof it now suffices to show that $\psi\in\mathcal{X}^{1,2}(\Omega)$ solves
the same equation solved by $u$; by uniqueness, this will prove that $\psi = u$, as desired.
Let then $\varphi\in\mathcal{X}^{1,2}(\Omega)$ be arbitrarily fixed. Since $\varphi_k$
is a weak solution of \eqref{eq:PbDirApprox}, we can write
\begin{equation} \label{eq:topassLimit}
	\mathcal{B}(\varphi_k,\varphi) = \gamma\int_\Omega\frac{\varphi_{k-1}\varphi}{|x|^2+1/k}\,dx
	+\int_\Omega f_k\varphi\,dx.
\end{equation}
Since $\varphi_k\rightharpoonup \psi$ in $\mathcal{X}^{1,2}(\Omega)$, we have
$$\mathcal{B}(\varphi_k,\varphi)\to\mathcal{B}(\psi,\varphi)\quad\text{as $k\to\infty$}.$$
Moreover, since $0\leq\varphi_k\leq \psi$, we have
\begin{align*}
	\Big|\frac{\varphi_{k-1}\varphi}{|x|^2+1/k}\Big|&\leq 
	\frac{\psi|\varphi|}{|x|^2} = g(x)\quad\forall\,\,k\geq 1,
\end{align*}
and $g\in L^1(\Omega)$ by H\"older's and Hardy's inequality 
(recall that $\psi,\varphi\in\mathcal{X}^{1,2}(\Omega)$). We are then entitled to
apply the Lebesgue Dominated Convergence theorem,
showing that
$$\int_\Omega\frac{\varphi_{k-1}\varphi}{|x|^2+1/k}\,dx\to 
\int_\Omega\frac{\psi\varphi}{|x|^2}\,dx.$$
Finally, since $0\leq f_k\leq f$, we also have
\begin{align*}
	|f_k\varphi|&\leq 
	f\varphi = h(x)\quad\forall\,\,k\geq 1,
\end{align*}
and $h\in L^1(\Omega)$ by H\"older's inequality 
(recall that $f\in L^{\frac{2n}{n+2}}(\Omega)$ and $\varphi\in\mathcal{X}^{1,2}(\Omega)
\subset L^{2^*}(\Omega)$). Again by the Lebesgue Dominated Convergence theorem,
we then get
$$\int_\Omega f_k\varphi\,dx\to\int_\Omega f\varphi\,dx.$$
Gathering all these facts, we can finally pass to the limit as $k\to\infty$ in the above
\eqref{eq:topassLimit},
thus deducing that $\psi$ is a weak solution of the same equation solved by $u$.
\end{proof}

\end{document}